\newtheorem{theorem}{Theorem}[section]
\newtheorem{corollary}{Corollary}[theorem]
\newtheorem{proposition}[theorem]{Proposition}
\newtheorem{prop}[theorem]{Proposition}
\newtheorem{example}[theorem]{Example}
\theoremstyle{definition}
\newtheorem{definition}[theorem]{Definition}
\newtheorem{defn}[theorem]{Definition}
\newtheorem{remark}[theorem]{Remark}
\newtheorem{question}[theorem]{Question}
\newtheorem{quest}[theorem]{Question}
\theoremstyle{remark}
\newtheorem*{obsrem}{Remark}
\newcounter{casenum}
\newcommand{\defi}[1]{\textsf{#1}} 
\def\namedlabel#1#2{\begingroup
    #2%
    \def\@currentlabel{#2}%
    \phantomsection\label{#1}\endgroup
}
\newcommand{\bbC}{\mathbb{C}}
\newcommand{\bbN}{\mathbb{N}}
\newcommand{\bbQ}{\mathbb{Q}}
\newcommand{\bbR}{\mathbb{R}}
\newcommand{\bbZ}{\mathbb{Z}}
\newcommand{\calF}{\mathcal{F}}
\newcommand{\calG}{\mathcal{G}}
\newcommand{\calM}{\mathcal{M}}
\newcommand{\calQ}{\mathcal{Q}}
\newcommand{\calT}{\mathcal{T}}
\newcommand{\frakA}{\mathfrak{A}}
\newcommand{\frakS}{\mathfrak{S}}
\newcommand{\prm}{^\prime}
\newcommand{\parent}[1]{\left( #1 \right)}
\renewcommand{\i}{\text{\rm i}}
\newcommand{\inv}{^{-1}}
\newcommand{\abs}[1]{\left|#1\right|}
\DeclareMathOperator{\into}{\hookrightarrow}
\DeclareMathOperator*{\MJoin}{\bigtriangledown}
\newcommand{\Sum}{\frakS}
\newcommand{\Nor}{N}
\newcommand{\Domain}{{\rm D}}
\newcommand{\Codomain}{{\rm E}}
\newcommand{\Sums}[2]{\textbf{\upshape{S}}\parent{#1,#2}} 
\newcommand{\PSums}[2]{\textbf{\upshape{pMS}}\parent{#1,#2}} 
\newcommand{\WSums}[2]{\textbf{\upshape{wMS}}\parent{#1,#2}} 
\newcommand{\MSums}[2]{\textbf{\upshape{MS}}\parent{#1,#2}} 
\newcommand{\R}{{\rm R}} 
\newcommand{\TSumCat}{\textbf{\upshape{TSum}}}
\newcommand{\QSumCat}{\textbf{\upshape{QSum}}}
\newcommand{\SumCat}{\textbf{\upshape{Sum}}}
\newcommand{\pMSumCat}{\textbf{\upshape{pMSum}}}
\newcommand{\wMSumCat}{\textbf{\upshape{wMSum}}}
\newcommand{\MSumCat}{\textbf{\upshape{MSum}}}
\newcommand{\Reg}[1]{{\rm Reg}\parent{#1}}
\newcommand{\T}{\calT} 
\newcommand{\M}{\calM} 
\newcommand{\Q}{\calQ} 
\newcommand{\Add}{\frakA} 
\newcommand{\Abelsum}{\Sum_A} 
\newcommand{\Abeldomain}{\Domain_A}
\newcommand{\Borelsum}{\Sum_B} 
\newcommand{\Boreldomain}{\Domain_B}
\newcommand{\Convsum}{\Sum_c} 
\newcommand{\Convdomain}{\Domain_c} 
\newcommand{\Abssum}{\Sum_a} 
\newcommand{\Absdomain}{\Domain_a} 
\newcommand{\CHsum}{\Sum_{CH}} 
\newcommand{\CHdomain}{\Domain_{CH}} 
\newcommand{\Eulersum}[1]{\Sum_{E,#1}}
\newcommand{\Eulerdomain}[1]{\Domain_{E,#1}}
\newcommand{\Eulersumspecial}{\Sum_{E}}
\newcommand{\Eulerdomainspecial}{\Domain_{E}}
\newcommand{\psum}{\Sum_p} 
\newcommand{\ignore}[1]{}
\begin{document}
\title{Telescopic, Multiplicative, and Rational Extensions of Summations} 
\author{Robert J. MacG. Dawson}\email{rdawson@cs.smu.ca}
\author{Grant Molnar}\email{Grant.S.Molnar.GR@dartmouth.edu}\thanks{The second author received support from the Gridley Fund for Graduate Mathematics during the writing of this paper.}

\begin{abstract}
       A summation is a shift-invariant $\R$-module homomorphism from a submodule of $\R[[\sigma]]$ to $\R$ or another ring. \cite{Dawson} formalized a method for extending a summation to a larger domain by telescoping. In this paper, we revisit telescoping, we study multiplicative closures of summations (such as the usual summation on convergent series) that are not themselves multiplicatively closed, and we study rational extensions as a generalization of telescoping.
\end{abstract}

\maketitle

\section{Introduction}\label{Section: Introduction}

Let $\R$ be a commutative unital ring with $0 \neq 1$. A \defi{(formal) series} over $\R$ is a sequence of terms $a_0 + a_1 + \dots$ with $a_0, a_1, \ldots \in \R$, indexed by the natural numbers $\bbN = \Set{0, 1, 2, \dots}$. The collection of all such series forms an $\R$-module under pointwise addition and scalar multiplication. Of course, a series should be more than ``a sequence with plus signs''! We will interpret our series as formal power series in a variable $\sigma$, as justified below.

Our notation makes it natural to identify $r \in \R$ with the series $r + 0 + 0 + \dots$, and we do so for the remainder of the paper. Thus 
\[
	\parent{r + 0 + 0 + \dots} \cdot \parent{a_0 + a_1 + a_2 + \dots} = r a_0 + r a_1 + r a_2 + \dots.
\] 
Now how shall we define $\parent{0 + r + 0 + \dots} \cdot \parent{a_0 + a_1 + a_2 + \dots}$? It is natural to declare
\[
	\parent{0 + r + 0 + \dots} \cdot \parent{a_0 + a_1 + a_2 + \dots} \coloneqq 0 + r a_0 + r a_1 + r a_2 + \dots.
\]
More generally, define the right-shift operator $\sigma$ on the space of formal series by 
\[
	\sigma : a_0 + a_1 + \dots \mapsto 0 + a_0 + a_1 + \dots;
\]
we enforce commutativity between $\sigma$ and the elements of $\R$, so that $\sigma r = r \sigma$ for each $r \in R$. Then we may formally write 
\[
	a_0 + a_1 + a_2 + \dots = a_0 +  \sigma a_1 +  \sigma^2 a_2 + \dots = a_0 + a_1 \sigma + a_2 \sigma^2 + \dots,
\]
and the formal series act on one another via the Cauchy product. This approach naturally identifies the space of formal series with the $\R$-algebra $\R[[\sigma]]$ of formal power series over $\R$.

This identification is natural in another sense. In classical analysis, the projection 
\[
	\sum_{n = 0}^\infty a_n \mapsto \sum_{n = 0}^N a_n
\] 
is a key ingredient in defining the sum of a convergent series. But 
\[
\R[[\sigma]] \coloneqq \lim_{\leftarrow} R[\sigma]/(\sigma^n)
\]
is precisely the $\R$-algebra that maps universally down to each of these spaces of finite sums (construed with appropriate multiplication). For these reasons, we deem it reasonable to view $\R[[\sigma]]$ as the space of all series over $\R$. If the series $\sum_{n = 0}^\infty a_n z^n \in \bbC[[z]]$ converges to a holomorphic function $A(z)$ in a neighborhood of the origin, we write $A(\sigma)$ for the formal series $\sum_{n = 0}^\infty a_n \sigma^n$. 

While formal power series are a fruitful object of study in their own right \cite{Ahmed-Sana, Aroca-Rond, Enochs-Jenda-Ozbek, Fripertinger-Reich, Hizem, Ivan, Roman}, mathematicians have a longstanding interest in assigning a definite ``sum'' to each series, or at least as many series as possible \cite{Balser, Beekman-Chang1, Beekman-Chang2, Boos, Cauchy, Euler, Faulstich-Luh, Hardy, Hill, Remy, Shawyer-Watson, Calculus}. For a finitely supported series $A$, this is straightforward: if $A \in \R[\sigma]$, we set $\Add(A) \coloneqq A(1)$, and say $\Add(A)$ is the \defi{sum} of $A$. Thus for $A = a_0 + \dots + a_N + 0 + 0 + \dots$, we have $\Add(A) = a_0 + \dots + a_N \in R$, as intuition demands. This definition works equally well if we wish to sum $A$ in an $\R$-algebra $\Codomain$.

The question of how to define the sum of
\[
	X = \sum_{n = 0}^\infty x_n \sigma^n = x_0 + x_1 + x_2 + \dots
\] 
becomes much more delicate when $X$ has infinite support. If $X$ has real or complex coefficients, the \defi{classical sum} $\Convsum\parent{X}$ of $X$ is defined by
\begin{equation}\label{Equation: Classical summation}
	\Convsum\parent{X} \coloneqq \lim_{N \to \infty} \sum_{n = 0}^N x_n,
\end{equation}
wherever this limit exists. We write $\Convdomain$ for the vector space of convergent series over $\bbC$. Indeed, \eqref{Equation: Classical summation} makes sense whenever the coefficients of $X$ reside in a field with an absolute value. In particular, if $X$ has $p$-adic coefficients, we say $\Sum_p(X)$ is the limit of the partial sums of $X$ in the field $\bbQ_p$ (or any pertinent extension thereof). We write $\Domain_p$ for the vector space of (absolutely) convergent series over $\bbQ_p$.

Numerous methods exist for defining the sum of a series $X$ with real or complex coefficients when classical summation fails. We record a non-exhaustive sampling below (for additional summation methods, see \cite{Hardy}).

\begin{itemize}
	\item The \defi{Ces\'{a}ro-H\"{o}lder sum} $\CHsum(X)$ of $X$ is defined by
\[
		\CHsum(X) \coloneqq \lim_{N \to \infty} \frac{k!}{N^k} \sum_{n = 0}^N {{N-n+k} \choose {k}} x_{n}
\]
for any $k$ where this limit exists \cite[page 96]{Hardy}. If the limit exists for fixed $k$, we say that $X$ is \defi{Ces\'{a}ro-H\"{o}lder summable} of order $k$. We write $\Domain_{CH}$ for the vector space of Ces\'{a}ro-H\"{o}lder summable series. \\ 
		\item The \defi{Abel sum} $\Abelsum(X)$ of $X$ is defined by
\[
		\Abelsum(X) \coloneqq\lim_{\rho \to 1^{-}} \sum_{n = 0}^\infty x_n \rho^n
\]
wherever this limit exists \cite[page 7]{Hardy}. We write $\Domain_A$ for the vector space of Abel summable series.\\
		\item The \defi{exponential Borel sum} is defined by 
\[
		\Sum_b(X) \coloneqq \lim_{t \to \infty} e^{-t} \sum_{n = 0}^\infty \parent{\sum_{m = 0}^n x_m} \frac{t^n}{n!}
\]
wherever this limit exists \cite[page 27]{Shawyer-Watson}. The closely related \defi{integral Borel sum} $\Sum_b\prm(X)$ of $X$ is defined by
\[
		\Sum_b\prm(X) \coloneqq \int_0^\infty e^{-t} \sum_{n = 0}^\infty \frac{x_n}{n!} t^n dt
\]
wherever this limit exists \cite[page 27]{Shawyer-Watson}. It is known that $\Sum\prm_b(X)=x$ if and only if $\Sum_b(\sigma X)=x$ \cite[Theorem 126]{Hardy}. We write $\Domain_b$ for the vector space of exponential Borel summable series, and $\Domain_b\prm$ for the vector space of integral Borel summable series. \\
		\item Let $\Omega \subseteq \bbC$ be a connected set containing 0 with 1 as a limit point. The \defi{Euler sum} $\Eulersum \Omega(X)$ of $X$ (relative to $\Omega$) is defined by
\[
		\Eulersum \Omega (X) \coloneqq \lim_{\substack{\rho \to 1 \\ \rho \in \Omega}} \xi(\rho)
\]
where $\xi(z)$ is the analytic continuation of $\sum_{n = 0}^\infty x_n z^n$ on an open set $U \supseteq \Omega$, if such an analytic continuation exists for some $U$ (This definition is adapted from \cite[page 7]{Hardy}). We write $\Domain_{E, \Omega}$ for the vector space of Euler summable series (relative to $\Omega$). We also write $\Eulersumspecial \coloneqq \Eulersum {[0,1)}$ and $\Domain_E \coloneqq \Domain_{E, [0,1)}$. Euler has other summations associated to his name (see \cite{Euler} and \cite[Chapters 8, 13]{Hardy}), but we have no need of them in this paper.

\end{itemize}

A few points bear immediate mention. Observe that each of these methods carries a distinctly analytic flavor; by contrast, the methods we propound in this paper are entirely algebraic. In any case, given a series $X$, we may hope these definitions will allow us to coherently discuss the sum of $X$, even if $X$ were divergent. But we are confronted at once with three issues. 

Our first issue is that the value of the sum of $X$ may depend on our choice of summation method. In many cases one summation merely extends another: the domain of the first includes the domain of the second, and they agree there. If this were always the case, there would be a ``maximal'' or ``Platonic'' summation, of which other methods could compute a greater or a lesser part. But, as various examples below show, this is not the case.

Our second issue is that these definitions presuppose we are summing series in a field with absolute value. But addition is possible in every ring, and so it is natural to ask how we might define infinite sums in every ring as well. For instance, how could we define $1 + 2 + 4 + 8 + \dots$ in $\bbZ / 6 \bbZ$? This question is slightly ambiguous; every ring (with identity) contains a canonical image of $\bbZ$.  We may consider $1 + 2 + 4 + 8 + \dots$ as a series in $\bbZ / 6 \bbZ$ (where $2=8$) or in $\bbZ$. In the latter case, we would be summing series over one ring ($\bbZ$) into another ring ($\bbZ / 6 \bbZ$).
 
Even the traditional definition of infinite summation forces us to consider sums that ``escape'' from $\bbQ$ into $\bbR$; and formal telescoping lets sums ``escape'' from $\bbZ$ into $\bbQ$ \cite[Section 3]{Dawson}. When can we sum series over $\R$ into some other $\R$-algebra?  The following examples showcase some of the subtleties that arise.

\begin{example} \cite[page 82]{Koblitz}\label{Example: series sums to different values}
	Define 
	\[
		X \coloneqq \sqrt{1 + \frac{7}{9} \sigma} = \sum_{n = 0}^\infty {{1/2} \choose n} \frac{7^n}{9^n} \sigma^n = 1 + \frac{7}{18} - \frac{49}{648} + \frac{343}{11664} - \frac{12005}{839808} + \frac{117649}{15116544} - \dots;
	\] 
	we easily compute that the sum of $X$ in $\bbR$ is $\Convsum(X) = \frac{4}{3}$, and the sum of $X$ in $\bbQ_7$ is $\Sum_7(X) = - \frac{4}{3}$. Thus the sum of $X$ as a series over $\bbQ$ may (at least) be taken to be either $\frac{4}{3}$ or $-\frac{4}{3}$, depending on context and convenience. We note that both of these are sums  in the classical sense of limits of partial sums; which one we get depends on our choice of metric.
\end{example}

Example \ref{Example: series sums to different values} is by no means unique. Indeed, for any integer $a \geq 3$, if we set 
\[
X_a \coloneqq \sqrt{1 + \parent{\frac{2a+1}{a^2}} \sigma} = \sum_{n = 0}^\infty {{1/2} \choose n} \parent{\frac{2a+1}{a^2}}^n \sigma^n,
\]
then $\Convsum\parent{X_a} = \frac{a+1}{a}$, whereas $\Sum_p\parent{X_a} = -\frac{a+1}{a}$ for each prime $p$ dividing $2a + 1$. More generally, for any positive integer $m$ we can find a rational series $X$ and a set of primes $\{p_1,p_2,\dots,p_m\}$ such that the sum of $X$ in each $\bbQ_{p_i}$ is different. 

\begin{example} 
 Let $r$ be a rational number, and let $Y_r \coloneqq \sum_{k=0}^\infty \binom{1/2}{n}(r^2-1)^n \sigma^k$, so that $Y_r^2 = 1+(r^2-1)\sigma$. As all the $p$-adic limit summations $\Sum_p$ preserve products (see Section \ref{Section: Multiplicative Summations}), if $Y_r$ converges $p$-adically, it converges to $\pm r$. 

Let $p$ be an odd prime dividing $r \pm 1$; then $Y_r$ converges $p$-adically, and all of its terms past the first are strictly smaller than 1. As the $p$-adic metric is an ultrametric, $\Sum_p(Y_r)$ is whichever of $r$ and $-r$ is $p$-adically closer to 1, so $\psum(Y_r) = \mp r$. 

Now let $p_1,\dots,p_m$ be an arbitrary sequence of odd primes. By the Chinese Remainder Theorem, we may choose a sequence of integers $r_1,\dots,r_m$ such that $r_i \equiv -1 \pmod {p_j}$ if $i=j$, and $r_i \equiv 1 \pmod {p_j}$ otherwise. Let 
\[
Y \coloneqq \sum_{n=0}^{\infty} \binom{1/2}{n} \left ((r_1^2-1)^n +\dots + (r_m^2-1)^n\right) \sigma^n.
\]
By construction, $\Sum_{p_j} (Y) = 2r_j -(r_1 +\dots+ r_m)$ for each $1 \leq j \leq m$. If the sequence $r_1,\dots,r_m$ grows sufficiently rapidly, each of these sums is distinct.
\end{example}

\begin{example} \cite[page 16]{Hardy} Recall that $n!! \coloneqq \prod_{0 \leq 2k < n} (n - 2k) = n \cdot (n - 2) \cdot \dots$. The series 
\[
\sum_{n=0}^\infty \frac{(2n-1)!!}{(2n)!!}\left(\frac{2z}{1+z^2}\right)^{2n}
\]
converges to $\frac{1+z^2}{1-z^2}$ for $\abs{z}<1$ but to $\frac{1+z^2}{z^2-1}$ for $\abs{z}>1+\sqrt{2}$. When $z=2i$ we obtain the series $1+\frac{8}{9} + \frac{32}{27}+\cdots$. This is divergent: Euler's method would evaluate it to $3/5$, but a modification of Euler's method with basepoint $z_0$, $\abs{z_0} > 1+\sqrt{2}$, would assign the value $-3/5.$
\end{example}

Our third issue is that we do not yet a principled notion of what a summation \emph{is}, or what properties it is supposed to have. Indeed, various notions of summation exist in the literature. For instance, for Boos, a summation is a linear transformation on the space of sequences, post-composed with a limit functional \cite[Formulation 1.2.10]{Boos}. But we will follow in Hardy's footsteps. He lists the following axioms \cite[page 6]{Hardy} (Note that these axioms assert existence of sums as well as equality):
\begin{enumerate}[label=(\Alph*)]
  \item \label{HardyA} If $\Sum \parent{a_0+a_1+a_2+\dots} = s$ then 
  \[
  \Sum \parent{ka_0+ka_1+ka_2+\dots} = ks;
  \]
  \item \label{HardyB} If $\Sum \parent{a_0+a_1+a_2+\dots} = s$ and $\Sum \parent{b_0+b_1+b_2+\dots} = t$, then 
  \[
  \Sum \parent{(a_0 + b_0) + (a_1 + b_1) + (a_2 + b_2) + \dots}= s+t;
  \]
  \item \label{HardyC} If $\Sum(a_0+a_1+a_2+\dots) = s$, then 
  \[
  \Sum(a_1+a_2+a_3 + \dots) = s-a_0,
  \]
  and conversely.
  \end{enumerate}
Hardy states that every summation method in the book obeys axioms \ref{HardyA} and \ref{HardyB}, while most obey axiom \ref{HardyC}. In \cite[Theorem 40]{Hardy}, he divides axiom \ref{HardyC} into two subaxioms:

\begin{itemize}
\item[\namedlabel{Hardy subaxiom 1}{($\gamma$)}] If $\Sum(a_1+a_2+a_3\cdots) = s-a_0$, then 
 \[
 \Sum(a_0+a_1+a_2+\cdots) = s;
 \]
\item[\namedlabel{Hardy subaxiom 2}{($\delta$)}] If $\Sum(a_0+a_1+a_2+\cdots) = s$, then 
\[
\Sum(a_1+a_2+a_3\cdots) = s-a_0.
\]
\end{itemize}

Hardy implicitly assumes the coefficients and sum of a series to be in $\bbC$. At no point does he go outside $\bbC$, and he rarely remarks upon summations that ``escape'' from a proper subring, e.g. by summing $1-1+1-1+\cdots$ to $\frac{1}{2}$. He does however say that a process summing a real series to a complex value has ``an air of paradox'' \cite[page 16]{Hardy}. 
 
As above, let $\R$ be a commutative unital ring with $0 \neq 1$, and let $\Codomain$ be a commutative unital $\R$-algebra with $0 \neq 1$. Let $\Domain$ be a set and $\Sum : \Domain \to \Codomain$ a map. 

\begin{defn}
	The tuple $(\R, \Codomain, \Domain, \Sum)$ is a \defi{summation on $\R$ to $\Codomain$} if it satisfies the following axioms:
	\begin{enumerate}[label=(\Roman*)]
		\item We have $\R[\sigma] \subseteq \Domain \subseteq \R[[\sigma]]$;\label{Condition: AreSeries}
		\item The set $\Domain$ is an $\R$-module, and the map $\Sum$ is an $\R$-module homomorphism; \label{Condition: IsModuleMorphism}
		\item We have $\Sum(1) = 1$; \label{Condition: ExtendsAdd}
		\item We have $(1 - \sigma) \Domain \subseteq \Domain$, and the morphism $\Sum$ factors through $\Domain / (1 - \sigma) \Domain$. \label{Condition: Factors by (1 - sigma)}
	\end{enumerate}
\end{defn}

\noindent Axiom \ref{Condition: Factors by (1 - sigma)} gives us the following commutative diagram of $\R$-modules:
	\[
		\xymatrix@R=50pt{
		\Domain \ar[rr]^{\Sum} \ar@{->>}[dr]& & \Codomain \\
		& \Domain /(1 - \sigma) \ar@{.>}[ur]_{\widetilde{\Sum}} &
		}
	\]

We write $(\Domain, \Sum)$ or simply $\Sum$ for the summation $(\R, \Codomain, \Domain, \Sum)$ when no confusion results. We write $\Sums \R \Codomain$ for the set of all summations on $\R$ to $\Codomain$.

When $\R = \Codomain = \bbC$ these axioms agree with Hardy's axiom set above \cite[page 6]{Hardy}. Axiom \ref{Condition: AreSeries} (which Hardy takes for granted) guarantees that we are actually summing series rather than (for instance) evaluating definite integrals. Axiom \ref{Condition: IsModuleMorphism} implies that $\Sum$ is linear (Hardy's axioms (A) and (B)). Axioms \ref{Condition: IsModuleMorphism}, \ref{Condition: ExtendsAdd}, and \ref{Condition: Factors by (1 - sigma)} imply that 
\[
	\Sum\parent{a_0 + \dots + a_N + 0 + 0 + \dots} = a_0 + \dots + a_N \in \Codomain,
\]
so a summation extends finitary addition. Axiom \ref{Condition: Factors by (1 - sigma)}, equivalent to Hardy's \ref{HardyC}, requires a summation to be invariant under left and right shifts,  so that the sum of $0 + a_0 + a_1 + \dots$ is the same as the sum of $a_0 + a_1 + \dots$, and one sum is defined if the other is.

Every summation method defined above satisfies axioms \ref{Condition: AreSeries} through \ref{Condition: Factors by (1 - sigma)}, with the exception of exponential and integral Borel summation. However, both $\Sum_b$ and $\Sum_b\prm$ satisfy axioms \ref{Condition: AreSeries}, \ref{Condition: IsModuleMorphism}, \ref{Condition: ExtendsAdd}, and \ref{Hardy subaxiom 1}. We also have the implications 
\[
\Sum_b(X)=x\quad\Rightarrow\quad  \Sum_b\prm(X)=x \quad\Leftrightarrow\quad \Sum_b(\sigma X)=x,
\]
and the first implication is not reversible \cite[Theorems 124, 125, 126]{Hardy}. We conclude that both summations are right-shift-invariant but not, in general, left-shift-invariant (see \cite[Theorem 127]{Hardy} or \cite[Theorem 3.8]{Shawyer-Watson}). 

Given axiom \ref{Hardy subaxiom 1}, left shifting may render a series unsummable, but cannot make it sum to a new value.  Thus, setting
$\Borelsum(X) \coloneqq \Sum_b (\sigma^N X)$
when there exists $N$ such that the right-hand side is defined, we obtain a well-defined summation, called the \defi{Borel summation}. We write $\Domain_B$ for the vector space of Borel summable series. Naturally, we get the same summation if we start with $\Sum\prm_b$ instead of $\Sum_b$. This technique lets us extend any map that satisfies axioms \ref{Condition: AreSeries}, \ref{Condition: IsModuleMorphism}, \ref{Condition: ExtendsAdd}, and \ref{Hardy subaxiom 1} (or, with minor modifications, \ref{Hardy subaxiom 2}) to a summation.

The polynomial $1-\sigma$ internally represents the operation $\Delta$ that takes every series to its series of differences. Its multiplicative inverse 
\[
\Sigma \coloneqq \frac{1}{1 - \sigma} = \sum_{n = 0}^\infty \sigma^n = 1+1+1+1+1+\cdots
\] internally represents the operation $\Sigma$ that takes any series to its series of partial sums. The series $\Sigma$ cannot be in the domain of any summation. Otherwise we would have 
\begin{align*}
	\Sum(1+1+1+\cdots) &= \Sum(1+0+0+\cdots) + \Sum(0+1+1+\cdots) \\
	&= 1 + \Sum(1+1+1+...)\notag
\end{align*}
which is impossible. Thus the domain $\Domain$ is a proper submodule of $\R[[\sigma]]$, and specifying a domain will always be an important part of specifying a summation. 

\begin{definition}
	If $(\Domain, \Sum)$ and $(\Domain\prm, \Sum\prm)$ are summations in $\Sums \R \Codomain$ such that $\Domain \subseteq \Domain\prm$ and $\Sum\prm |_{\Domain} = \Sum$, then we call $\Sum\prm$ an \defi{extension} of $\Sum$. If $\Sum\prm$ extends $\Sum$, we write $\Sum \subseteq \Sum\prm$ or $(\Domain, \Sum) \subseteq (\Domain\prm, \Sum\prm)$. If $\Sum \subseteq \Sum\prm$ and $\Sum \neq \Sum\prm$, we write $\Sum \subset \Sum\prm$.
\end{definition}

As an example, every summation extends $\Add$.

Hardy defines a summation $\Sum$ to be \defi{regular} if ``it sums every convergent series to its ordinary sum'' \cite[page 10]{Hardy}. Thus a summation $\Sum \in \Sums \bbC \bbC$ is regular precisely if it extends $\Sum_c$. For instance, $\Convsum \subset \Sum_{CH} \subset \Sum_A \subset \Eulersumspecial$ (see Example \ref{EulerNotAbel} below), so $\Convsum$, $\Sum_{CH}$, $\Sum_A$, and $\Eulersumspecial$ are all regular summations.

We let $\SumCat$ be the posetal category with summations as objects and and extensions as morphisms. The set $\Sums \R \Codomain$ of summations on $\R$ to $\Codomain$ is inductively ordered by the extension relation; in other words, every ascending chain of summations has a supremum. Note that for every ring $\R$ and $\R$-algebra $\Codomain$, the set $\Sums \R \Codomain$ is a full subcategory of $\SumCat$. In \cite{Hardy}, Hardy writes that $\Sum\prm$ is \defi{stronger than} $\Sum$ when $\Sum\prm \supset \Sum$. In accordance with modern practice, our ``extends'' is the reflexive closure of this, equivalent to Hardy's ``stronger than or equal to''. 

Hardy defines two summations to be \defi{consistent} if they agree on the intersection of their domains \cite[page 65]{Hardy}. 

\begin{definition}
	If $\Sum$ and $\Sum\prm$ have a common extension, then we will call them \defi{compatible}.
\end{definition}

By \cite[Proposition 1.1]{Dawson}, two summations are consistent if and only if they are compatible; such pairs of summations have a largest common restriction $\Sum \cap \Sum\prm$ and a least common extension $\Sum\vee\Sum\prm$. However, we shall see that this equivalence fails when we impose additional constraints on our summations (see for instance Example \ref{ConsistButNotMultComp} below), and so we will maintain the two distinct terms. The following proposition is straightforward.

\begin{prop}\label{Proposition: CanCon}
The following are equivalent for any extension $(\Sum',\Domain')$ of a given summation $\Sum$:
\begin{enumerate}[label=(\roman*)]
  \item\label{Compat} $\Sum'$ is compatible with every extension of $\Sum$;
  \item\label{Consist} $\Sum'$ is consistent with every extension of $\Sum$;
  \item\label{DomDef} $\Sum'$ is the unique extension of $\Sum$ to $\Domain'$.
\end{enumerate}
\end{prop}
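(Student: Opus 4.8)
I would prove the two equivalences (i)$\Leftrightarrow$(ii) and (ii)$\Leftrightarrow$(iii) separately. The first is immediate from \cite[Proposition 1.1]{Dawson}: for each individual extension $(\Domain'', \Sum'')$ of $\Sum$, compatibility of the pair $\{\Sum\prm, \Sum''\}$ is equivalent to its consistency, so ``compatible with every extension'' and ``consistent with every extension'' say the same thing. For (ii)$\Rightarrow$(iii): if $(\Domain\prm, \Sum^\star)$ is any summation on the same domain $\Domain\prm$ extending $\Sum$, then $\Sum^\star$ is an extension of $\Sum$, so by (ii) it is consistent with $\Sum\prm$; since the two share the domain $\Domain\prm$, consistency forces them to agree on $\Domain\prm \cap \Domain\prm = \Domain\prm$, i.e. $\Sum^\star = \Sum\prm$. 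That is uniqueness.

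The real content is (iii)$\Rightarrow$(ii), which I would establish by contraposition: assuming $\Sum\prm$ is inconsistent with some extension $(\Domain'', \Sum'')$ of $\Sum$, I will build a second summation on $\Domain\prm$ extending $\Sum$. Inconsistency gives a witness $X \in \Domain\prm \cap \Domain''$ with $\Sum\prm(X) \neq \Sum''(X)$. The intersection $\Domain_0 \coloneqq \Domain\prm \cap \Domain''$ is again an admissible domain---it contains $\R[\sigma]$ and is a $(1-\sigma)$-stable submodule---and the difference $\theta \coloneqq \Sum''|_{\Domain_0} - \Sum\prm|_{\Domain_0}$ is an $\R$-linear map $\Domain_0 \to \Codomain$ that kills $\Domain + (1-\sigma)\Domain_0$ and satisfies $\theta(X) \neq 0$. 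If I can extend $\theta$ to an $\R$-linear map $\widetilde{\theta} \colon \Domain\prm \to \Codomain$ still killing $\Domain + (1-\sigma)\Domain\prm$, then $\Sum\prm + \widetilde{\theta}$ is a summation on $\Domain\prm$: it is linear, it fixes $1$ because $\widetilde{\theta}$ vanishes on $\Domain \ni 1$, and it annihilates $(1-\sigma)\Domain\prm$, so it factors through $\Domain\prm/(1-\sigma)\Domain\prm$. It extends $\Sum$ and disagrees with $\Sum\prm$ at $X$, contradicting (iii).

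The main obstacle is exactly this extension problem, and it is where the arithmetic of $\R[[\sigma]]$ is forced to intervene: homomorphisms into $\Codomain$ do not extend off submodules for free. Recasting it, I must produce a homomorphism from $Q \coloneqq \Domain\prm/\parent{\Domain + (1-\sigma)\Domain\prm}$ to $\Codomain$ that does not annihilate the class $[X]$, and $\theta$ already provides such a map on the image of $\Domain_0$. The delicate case is $[X] = 0$ in $Q$, i.e. $X = a + (1-\sigma)v$ with $a \in \Domain$, $v \in \Domain\prm$; here $\Sum\prm(X) = \Sum(a)$ is forced and no such functional can exist, so I must instead show directly that this value is \emph{universal}, forcing $\Sum''(X) = \Sum(a) = \Sum\prm(X)$ and contradicting the choice of $X$. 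The tool I would use is the identity $\Sum(g(\sigma) w) = g(1)\,\Sum(w)$, valid for every polynomial $g$ and every summation whose domain contains $w$, which converts any relation $g(\sigma) X \in \Domain$ with $g(1) \neq 0$ into a value that every extension of $\Sum$ must assign to $X$. Showing that uniqueness on $\Domain\prm$ always traces back to such polynomial relations---so that no genuinely non-universal forced value can occur---is the crux of the argument and the step I expect to require the most care.
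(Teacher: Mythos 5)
Your handling of (i)$\Leftrightarrow$(ii) (pairwise application of \cite[Proposition 1.1]{Dawson}) and of (ii)$\Rightarrow$(iii) is correct; the paper offers no proof of its own, so there is nothing to compare there. The gap is exactly where you place it, in (iii)$\Rightarrow$(ii), but it is worse than a step requiring care: the reduction you propose --- that uniqueness of the extension to $\Domain\prm$ always traces back to polynomial relations $g(\sigma)X \in \Domain$ with $g(1) \neq 0$, hence to values that \emph{every} extension of $\Sum$ must assign --- fails. Uniqueness can also be forced by relations $X = (1-\sigma)V$ with $V \in \Domain\prm$ a genuine power series rather than a polynomial; such a relation pins $\Sum\prm(X)$ to $0$ for every summation whose domain contains $V$, but it is invisible to an extension $\Sum''$ of $\Sum$ whose domain contains $X$ but not $V$, and that $\Sum''$ may send $X$ anywhere. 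Concretely: take $\R = \Codomain = \bbQ$, $\Sum = \Add$ on $\Domain = \bbQ[\sigma]$, let $V_0$ be transcendental over $\bbQ[\sigma]$, set $V_n \coloneqq (1-\sigma)^{-n}V_0$, and let $\Domain\prm \coloneqq \bbQ[\sigma] + \bigcup_{n}\bbQ[\sigma]V_n$. Any extension of $\Add$ to $\Domain\prm$ kills every $V_n = (1-\sigma)V_{n+1} \in (1-\sigma)\Domain\prm$, hence is the map $F + G V_n \mapsto F(1)$ (well defined by transcendence), so (iii) holds; but $\Sum'' \colon F + GV_0 \mapsto F(1) + G(1)$ on $\bbQ[\sigma]\oplus\bbQ[\sigma]V_0 \subseteq \Domain\prm$ is an extension of $\Add$ disagreeing with $\Sum\prm$ at $V_0$, so (ii) fails along your route.

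Two further remarks. First, the functional-extension step you lean on is independently obstructed: $\theta$ lives on $\Domain_0 = \Domain\prm\cap\Domain''$, and there is no reason it should extend to $\Domain\prm$, since $\Codomain$ need not be injective as an $\R$-module; relatedly, the quotient $Q$ can be a nonzero torsion module with $\operatorname{Hom}_\R(Q,\Codomain)=0$, which is another way for (iii) to hold without every value being universally forced. Second, the example above shows that the equivalence only goes through if (iii) is read hereditarily --- $\Sum\prm|_{\Domain_0}$ is the unique extension of $\Sum$ to $\Domain_0$ for \emph{every} admissible $\Domain \subseteq \Domain_0 \subseteq \Domain\prm$ --- in which case (iii)$\Rightarrow$(ii) is immediate by restricting both summations to $\Domain + \R[\sigma]X$ for a given $X \in \Domain\prm\cap\Domain''$. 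You should either prove that version or flag the discrepancy with the statement as written.
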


\begin{definition}
	If $\Sum\prm$ satisfies the conditions of Proposition \ref{Proposition: CanCon}, we say $\Sum\prm$ is $\Sum$-\defi{canonical}. The unique $\Sum$-canonical summation that extends every $\Sum$-canonical summation is called the \defi{fulfillment of $\Sum$}. If a summation $\Sum$ is its own fulfillment, we say that $\Sum$ is \defi{fulfilled}.
\end{definition}

The fulfillment of a summation always exists because $\Sums \R \Codomain$ is inductively ordered. A series is in the domain of the fulfillment of $\Sum$ if and only if there is a unique value to which it can be summed by a summation compatible with $\Sum$.

We noted already that $\Sum_c \subseteq \Sum_{CH} \subseteq \Sum_A \subseteq \Sum_E$. We now examine the relationship between $\Sum_B$ and $\Sum_E$.

\begin{example}\label{Example: BorelNotAbel}
	Let 
	\[
	G_{-2} \coloneqq \frac{1}{1 + 2 \sigma} = \sum_{n = 0}^\infty (-2)^n \sigma^n = 1 - 2 + 4 - 8 + 32 - 64 + \dots.
	\]
	For any real number $r$ with $\frac{1}{2} < r < 1$, we have $\abs{-2 r} > 1$, and so $\sum_{n = 0}^\infty (-2)^n r^n$ diverges by the ratio test. Thus $G_{-2}$ is  not Abel summable. However,
\[
\Borelsum(G_{-2}) =  \int_0^\infty e^{-t} \sum_{n = 0}^\infty \frac{(-2)^{n}}{n!} t^n dt =  
\int_0^\infty e^{-t} e^{-2t} dt = \frac{1}{3},
\]
so $G_{-2}$ is Borel summable. 
\end{example}

\begin{example}\label{AbelNotBorel}
	The series $A \coloneqq \sum_{n=0}^\infty(-1)^nn\sigma^{n^2}$ is Abel summable but not Borel summable \cite[page 213]{Hardy}. Hardy gives the following Tauberian theorem for (exponential and integral) Borel summability: if 
	\[
	S \coloneqq \sum_{n=0}^\infty s_n \sigma^n \in \Domain_b \ \text{and} \ s_n = O(\sqrt{n}),
	\]
	then $S$ is convergent \cite[Theorem 156]{Hardy}. But for any fixed $N \in \bbN$, we see $s_n = O(\sqrt{n})$ if and only if $s_{n + N} = O(\sqrt{n})$, and $S$ converges if and only if $\sigma^N S$ converges. So the Tauberian theorem applies equally to our shift-invariant Borel summation $\Sum_B$: if 
	\[
	S = \sum_{n=0}^\infty s_n \sigma^n \in \Domain_B \ \text{and} \ s_n = \ O(\sqrt{n}),
	\]
	then $S$ is convergent. By contrapositive, $A$ is not Borel summable, as claimed.
\end{example}  

Thus neither of $\Sum_A$ nor $\Sum_B$ extends the other.
	
\begin{example}\label{EulerNotAbel}
	The series $G_{-2}$ also demonstrates that $\Eulersumspecial \not\subseteq \Abelsum$, but on the other hand, it is immediate that $\Abelsum \subseteq \Eulersumspecial$. 
\end{example}

Let $X = X(\sigma) \in \Boreldomain$. If $X(z)$ converges in a neighborhood of the origin, then by \cite[Theorem 132]{Hardy}, we have $\sigma^N X(\sigma) \in \Eulerdomainspecial$ for some $N$, and moreover 
	\[
	\Borelsum(X) = \Eulersumspecial(\sigma^N X) = \Eulersumspecial(X).
	\]
	But if $X$ is Euler summable, then $X(z)$ must converge in a neighborhood of the origin, and so $z^N X(z)$ converges in a neighborhood of the origin for every $N \geq 0$. Thus $\Boreldomain \cap \Eulerdomainspecial$ contains exactly the Borel-summable series $X$ for which $X(z)$ converges in a neighborhood of the origin, and the summations $\Borelsum$ and $\Eulersumspecial$ are compatible. 

\begin{example}\label{Example: Euler summation does not extend Borel summation}
	Following \cite[page 190]{Hardy}, let us consider the series 
	\[
	Y \coloneqq \sum_{n = 0}^\infty y_n \sigma^n = y_0 + y_1 + y_2 + y_3 + \dots \ \text{with}  \ y_n \coloneqq \sum_{m = 0}^\infty \frac{(-1)^m m^n}{m!} 2^m.
	\] 
	We easily verify that $\Borelsum(Y) = \int_0^\infty e^{-t - 2 e^t} dt$ converges, and so $Y \in \Boreldomain$. But $Y(z)$ does not converge in any neighborhood of the origin, and so $Y \not\in \Eulerdomainspecial$. Thus $\Boreldomain \not\subseteq \Eulerdomainspecial$.
\end{example}

\begin{example}\label{Example: Borel summation does not extend Euler summation}
	Let 
	\[
	Z \coloneqq \frac{1}{1 + 16 \sigma^4} = 1 + 0 + 0 + 0 - 16 + 0 + 0 + 0 + 256 +  \dots.
	\]
	Suppose by way of contradiction that $\Sum_b\prm(\sigma^N Z)$ converges for some $N \geq 0$. Replacing $N$ with $4N$ preserves Borel convergence \cite[Theorem 127]{Hardy}, so we compute
	\begin{align*}
	\Sum_B(Z) &= \Sum_b\prm(\sigma^{4N} Z) \\
	&= \pm 2^{-4N} \int_0^\infty e^{-t} \parent{\sum_{n = 0}^\infty \frac{(-1)^n 2^{4n} \sigma^{4n}}{(4n)!} - \sum_{n = 0}^{N-1} \frac{(-1)^n 2^{4n} \sigma^{4n}}{(4n)!}} dt \\
	&= \pm 2^{-4N} \int_0^\infty e^{t} \cdot \frac{e^{\zeta t} + e^{\zeta^3 t} + e^{\zeta^5 t} + e^{\zeta^7 t}}{4} dt \mp \sum_{n = 0}^{N-1} (-1)^n 2^{4n-4N},
	\end{align*}
	where $\zeta \coloneqq \exp\parent{\i \pi / 4}$. This integral diverges, and so $Z \not\in \Boreldomain$. On the other hand, $Z(z)$ converges to $\frac{1}{1 + 16 z^4}$ for $\abs{z} < \frac{1}{2}$, and this identity provides an analytic continuation of $Z(z)$ except at the points $\frac{\zeta}{2},$ $\frac{\zeta^3}{2},$ $\frac{\zeta^5}{2},$ and $\frac{\zeta^7}{2}$. In particular, we have an analytic continuation of $Z(z)$ in a neighborhood of the interval $[0,1)$, and
	\[
	\Eulersumspecial (Z) = \frac{1}{17},
	\]
	so $Z \in \Eulerdomainspecial$. Thus $\Eulerdomainspecial \not\subseteq \Boreldomain$
\end{example}

Taken together, Examples \ref{Example: Euler summation does not extend Borel summation} and \ref{Example: Borel summation does not extend Euler summation} show that the summations $\Borelsum$ and $\Eulersumspecial$ are compatible but not directly comparable.

In this paper, we study techniques for systematically and canonically extending summations.

\begin{definition}
A \defi{functor of summations} is a functor from the category $\SumCat$ to (but not necessarily onto) itself. If $\calG$ is another functor of summations, we say that $\calF$ \defi{subsumes} $\calG$ if 
\[
	\calF = \calF \circ \calG = \calG \circ \calF.
\] 
We say $\calF$ is \defi{idempotent} if $\calF$ subsumes itself, so that $\calF = \calF \circ \calF$.
\end{definition}

For $\calF$ a functor of summations, we abuse notation and write $\parent{\calF \Domain, \calF \Sum}$ for $\calF(\Domain, \Sum)$. If $\calF$ subsumes $\calG$, then $\calG$ fixes the image of $\calF$, and for any summation $\Sum$, the summation $\calF \Sum$ extends the summation $\calG \Sum$. The following proposition unpacks the definition of a functor of summations.

\begin{prop}\label{Proposition: functor of summations}
	Let $\calF$ be a rule which assigns a unique summation $\calF \Sum$ to every summation $\Sum$. The following are equivalent:
	\begin{enumerate}[label=(\roman*)]
		\item The rule $\calF$ is a functor of summations.
		\item If $\Sum \subseteq \Sum\prm$, then $\calF \Sum \subseteq \calF \Sum\prm$.
		\item If $\Sum$ and $\Sum\prm$ are compatible, then $\calF \Sum$ and $\calF \Sum\prm$ are compatible.
	\end{enumerate}
\end{prop}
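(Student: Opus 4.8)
The plan is to prove (i) $\Leftrightarrow$ (ii) formally, then (ii) $\Rightarrow$ (iii) directly, and finally (iii) $\Rightarrow$ (ii); together these give all three equivalences, and essentially all of the content is concentrated in the last implication.

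First I would settle (i) $\Leftrightarrow$ (ii) as an observation about posetal categories. Because $\SumCat$ is posetal, there is at most one morphism between any two objects, and a morphism $\Sum \to \Sum\prm$ exists exactly when $\Sum \subseteq \Sum\prm$. Thus a rule on objects underlies a (necessarily unique) functor precisely when it carries each such morphism to a morphism $\calF\Sum \to \calF\Sum\prm$, that is, precisely when $\Sum \subseteq \Sum\prm$ forces $\calF\Sum \subseteq \calF\Sum\prm$; preservation of identities and composites is then automatic. This yields (i) $\Leftrightarrow$ (ii) with no computation.

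For (ii) $\Rightarrow$ (iii) I would argue directly from the definition of compatibility. If $\Sum$ and $\Sum\prm$ are compatible, choose a common extension $\Sum''$, so that $\Sum \subseteq \Sum''$ and $\Sum\prm \subseteq \Sum''$. Applying (ii) to each inclusion yields $\calF\Sum \subseteq \calF\Sum''$ and $\calF\Sum\prm \subseteq \calF\Sum''$, so $\calF\Sum''$ is a common extension of $\calF\Sum$ and $\calF\Sum\prm$; hence the two are compatible. This direction is immediate.

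The crux is (iii) $\Rightarrow$ (ii), and I expect it to be the main obstacle. Assume $\calF$ preserves compatibility and let $\Sum \subseteq \Sum\prm$. Since $\Sum \subseteq \Sum\prm$ the two are compatible, so (iii) already supplies that $\calF\Sum$ and $\calF\Sum\prm$ are compatible, hence consistent: they agree on $\calF\Domain \cap \calF\Domain\prm$. It follows that the only thing left to prove is the domain containment $\calF\Domain \subseteq \calF\Domain\prm$, for then the overlap is all of $\calF\Domain$ and consistency upgrades compatibility to the inclusion $\calF\Sum \subseteq \calF\Sum\prm$. The difficulty is that preservation of compatibility constrains only the \emph{values} of the image summations on shared series, never which series belong to a domain; indeed, the order-theoretic skeleton alone --- a meet-semilattice with least element $\Add$ in which compatible pairs admit joins --- is too weak to force monotonicity, since interchanging two summations with identical compatibility behaviour but incomparable positions in the extension order would preserve compatibility yet fail to be monotone. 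To exclude this I would prove a separation lemma special to $\Sums\R\Codomain$: for any strict inclusion $\Sum_1 \subsetneq \Sum_2$ there is a summation $\calG$ compatible with $\Sum_1$ but not with $\Sum_2$, manufactured by telescoping a suitably chosen value onto a series of $\Domain_2 \setminus \Domain_1$ as in the extension constructions developed later in the paper. Such witnesses show that the extension order is recoverable from the compatibility relation. The genuinely delicate point --- where summation-specific structure must replace abstract poset theory --- is to transport these witnesses through the possibly non-surjective rule $\calF$ so as to force $\calF\Domain \subseteq \calF\Domain\prm$; once that containment is secured, the cycle closes through (ii) $\Rightarrow$ (i).
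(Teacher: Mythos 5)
Your handling of (i) $\Leftrightarrow$ (ii) (posetal categories make functoriality the same as monotonicity) and of (ii) $\Rightarrow$ (iii) (push a common extension through $\calF$) is correct, and this is all the paper itself supplies: the proposition is stated without proof as an ``unpacking'' of the definition. The remaining direction (iii) $\Rightarrow$ (ii), which you rightly single out as the crux, is where your proposal has a genuine gap --- and the route you sketch cannot close it. Your separation lemma is false. If $\Sum_1 \subsetneq \Sum_2$ with $\Sum_2$ a $\Sum_1$-canonical extension --- for instance $\Add \subsetneq \T\Add$ with integral-domain codomain, by Proposition \ref{Corollary: T is often the fulfillment} --- then \emph{every} $\calG$ compatible with $\Sum_1$ is compatible with $\Sum_2$: a common extension $\calH$ of $\calG$ and $\Sum_1$ is an extension of $\Sum_1$, hence compatible with $\Sum_2$ by canonicity, and a common extension of $\calH$ and $\Sum_2$ serves for $\calG$ and $\Sum_2$. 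So the extension order is not recoverable from the compatibility relation, and no such witnesses exist to transport.

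In fact the implication (iii) $\Rightarrow$ (ii) appears to be false as literally stated, so no repair of your argument can succeed. Take $V_1, V_2$ algebraically independent over $\R[\sigma]$ and let $\Sum_i$ be the extension of $\Add$ with domain $\R[\sigma] \oplus \R[\sigma]\cdot V_i$ sending $F + G V_i \mapsto \Add(F)$. These are incomparable ($V_1 \notin \Domain_2$) but compatible (extend both by $F + G_1 V_1 + G_2 V_2 \mapsto \Add(F)$). Now let $\calF$ send the single object $\Add$ to $\Sum_1$ and every other summation to $\Sum_2$. Any pair of images lies in $\{\Sum_1, \Sum_2\}$, and all such pairs are compatible, so (iii) holds; but $\Add \subseteq \Sum_1$ while $\calF\Add = \Sum_1 \not\subseteq \Sum_2 = \calF\Sum_1$, so (ii) fails. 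Your instinct that compatibility constrains only values on shared series and never domain membership was exactly right; the conclusion to draw is not that a summation-specific lemma is needed, but that condition (iii) is strictly weaker than (i) and (ii), i.e.\ the proposition as stated is in error. (The sentence following it, that a functor of summations ``preserves compatibility \emph{and} extensions,'' is the statement that is actually supported: (ii) implies (iii), not conversely.)
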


Thus, a functor of summations is simply a rule that preserves compatibility and extensions.

\begin{definition}
	We say a functor of summations $\calF$ is an \defi{extension map (of summations)} if $\calF \Sum \supseteq \Sum$ for every summation $\Sum$. If $\calF$ is an extension map and $\Sum$ is a summation such that $\Sum = \calF \Sum$, we say that $\Sum$ is \defi{$\calF$-closed}.
\end{definition}

By Proposition \ref{Proposition: functor of summations}, every extension map $\calF$ is ``canonical'' in the sense that $\calF \Sum$ is $\Sum$-canonical for every $\Sum$. In particular, if $\calF$ is an extension map and $\Sum \in \Sums \R \Codomain$, then $\calF \Sum \in \Sums \R \Codomain$.

The remainder of this paper is organized as follows. In Section \ref{Section: Telescopic Extensions}, we renovate the definition of telescopic summations from \cite{Dawson} for our present context, and prove that telescopic extension is an extension map. In Section \ref{Section: Multiplicative Summations}, we discuss and defend a stronger axiom schema for summations, namely multiplicativity, and show that it is compatible with telescopic extensions. In Section \ref{Section: Weakly Multiplicative Summations}, we construct the minimal multiplicative extension of a weakly multiplicative summation (such as $\Convsum$), and relate this construction to the telescopic extension of $\Sum$. In Section \ref{Section: Rational Extensions}, we fulfill the promise made in \cite{Dawson} to study N{\o}rlund means relative to series with infinite support by defining the rational extension of a multipicative summation; the rational extension multiplicatively subsumes telescopic extension. Example \ref{RatNotTel} will show that the rational extension of $\Convsum$ is larger than the telescopic extension of $\Convsum$.

\section*{Corrections}

The first author would like to point out that Proposition 2.5 of \cite{Dawson} is in error. If a summation $(\Domain, \Sum)$ is not multiplicative, $\Domain$ is generally not a ring, nor are $\T \Domain$ (see Example \ref{TelNotMult}), $\T \Domain \cup I_\T(\Sum)$, or $\T \Domain \cup \Psi_T(\Domain)$. Moreover, as $0 \not\in I_\T(\Sum)$ and $0 \not\in \Psi_T(\Domain)$, even if these modules are rings, $I_\T(\Sum)$ and $\Psi_\T(\Sum)$ cannot be their ideals. In our upcoming paper \cite{Dawson-Molnar-3} we will prove a corrected version of the claim.

Furthermore, while Theorem 4.2 of that paper, and the part of Theorem 4.4 that concerns $\Sum_c$, are correct, the corresponding statements about absolutely convergent series (Theorem 4.3, part of Theorem 4.4, and Corollary 4.4.1) are in error. The problem in each case is that the convergence to 0 of $X$ as a sequence need not be ``absolute,'' in the sense of the series $\sum(\sigma-1)X$ converging absolutely. The alternating harmonic series, which is telescopable over $\Sum_a$, is a counterexample to Corollary 4.4.1.

\section{Telescopic Extensions Revisited}\label{Section: Telescopic Extensions}

Telescoping convergent series is an ancient technique, dating back to Archimedes \cite[page 233]{Heath}, that we still teach our modern calculus students \cite[pages 462-464]{Calculus}. Euler was comfortable telescoping divergent series as well as convergent ones \cite[pages 205-209]{Euler}, and Hardy was aware of how the method interacted with his axioms for summations \cite[page 6]{Hardy}. Telescoping is also implicit in the study of linear recurrences \cite{Myerson-Poorten}.

In 1997, the first author introduced the notion of \defi{telescopically extending a summation} $\Sum$ from an integral domain $\R$ to its field of fractions. This method formalizes the familiar trick of telescoping a divergent series, by using the linearity and translation-invariance of $\Sum$ to determine what values a series might be summed to. Telescopic extensions were independently rediscovered (in a slightly different form) by Madhav Nori in 2012 \cite{Nori}. In this section, we generalize the construction in \cite{Dawson} to allow $\R$ to be an arbitrary commutative unital ring, and $\Codomain$ to be an arbitrary commutative unital $\R$-algebra.

Recall that $x \in \Codomain$ is \defi{regular} if $x$ is not a zerodivisor. Here and throughout the paper, we write $\Reg \Codomain$ for the set of regular elements of $\Codomain$.

\begin{defn}\label{Definition: Telescopic Extension}
	For any summation $(\Domain, \Sum) \in \Sums \R \Codomain$, the \defi{telescopic extension} $(\T \Domain, \T \Sum)$ of $(\Domain, \Sum)$ is defined as follows. For $X \in \R[[\sigma]]$, we say $X \in \T\Domain$ if there exists $A \in \Domain$, $F \in \R[\sigma]$, $f \in \Reg \Codomain$, and $x \in \Codomain$ such that $A = FX$, $\Sum(F) = f$, and $\Sum(A) = f x$. We define
	\begin{align*}
		\T \Sum &: \T \Domain \to \Codomain, \\
		\T \Sum &: X \mapsto x \ \text{if} \ X \ \text{is as above.}
	\end{align*}
\end{defn}

The following theorem extends facts proven in \cite{Dawson} to our current framework (see also Theorem \ref{Theorem: Rational Extension} below).

\begin{theorem}\label{Theorem: Telescopic Extension}
	The functor of summations 
	\[
		\T : \SumCat \to \SumCat
	\]
	is an idempotent extension map.
\end{theorem}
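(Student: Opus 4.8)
The plan is to unpack the statement into its three constituent obligations: that $(\T\Domain,\T\Sum)$ is a genuine summation, that $\T$ is an extension map (hence, by Proposition \ref{Proposition: functor of summations}, a functor once it preserves extensions), and that $\T$ is idempotent. Before addressing any of these, I would isolate the single computational fact that drives the entire argument. Axiom \ref{Condition: Factors by (1 - sigma)} gives $(1-\sigma)\Domain\subseteq\Domain$, and since $\sigma Y = Y-(1-\sigma)Y$ and $\Domain$ is a module, we also get $\sigma\Domain\subseteq\Domain$, so $\R[\sigma]\cdot\Domain\subseteq\Domain$. Combining this with shift-invariance ($\Sum((1-\sigma)Y)=0$, whence $\Sum(\sigma Y)=\Sum(Y)$) and linearity yields the \emph{key identity}
\[
\Sum(FY) = F(1)\,\Sum(Y) = \Sum(F)\,\Sum(Y)\qquad(F\in\R[\sigma],\ Y\in\Domain),
\]
where $\Sum(F)=F(1)$ because a summation extends finitary addition. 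Every subsequent step reduces to applying this identity together with the fact that a product of regular elements is regular.

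The heart of the argument, and the step I expect to be the main obstacle, is that $\T\Sum$ is well defined. Given two witnessing representations $A_i = F_i X$, $\Sum(F_i)=f_i$, $\Sum(A_i)=f_i x_i$ for $i=1,2$, I would cross-multiply: $F_2 A_1 = F_1 F_2 X = F_1 A_2$ lies in $\Domain$, and applying the key identity to both sides gives $f_1 f_2 x_1 = \Sum(F_2 A_1) = \Sum(F_1 A_2) = f_1 f_2 x_2$. Since $f_1,f_2\in\Reg\Codomain$ their product is regular and cancels, forcing $x_1=x_2$; this is precisely where the regularity hypothesis on $f$ is indispensable. With well-definedness secured, the summation axioms follow from the same ``multiply the $F$'s, combine the $A$'s'' template: for Axiom \ref{Condition: IsModuleMorphism} take $F=F_1 F_2$ and $A=F_2 A_1+F_1 A_2$ (scalar multiplication being immediate); Axioms \ref{Condition: AreSeries} and \ref{Condition: ExtendsAdd} come from the trivial witness $(A,F,f,x)=(X,1,1,X(1))$ for $X\in\R[\sigma]$; and for Axiom \ref{Condition: Factors by (1 - sigma)} the witness $A'=(1-\sigma)A$ satisfies $\Sum(A')=0$, so $\T\Sum((1-\sigma)X)=0$.

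Having verified that $(\T\Domain,\T\Sum)$ is a summation, the extension-map property is just the observation that the trivial witness $(X,1,1,\Sum(X))$ exhibits $\Domain\subseteq\T\Domain$ with $\T\Sum|_{\Domain}=\Sum$. Functoriality then reduces, via Proposition \ref{Proposition: functor of summations}, to checking that $\Sum\subseteq\Sum'$ implies $\T\Sum\subseteq\T\Sum'$: any witness $(A,F,f,x)$ for $X\in\T\Domain$ is literally a witness for $X\in\T\Domain'$, because $\Domain\subseteq\Domain'$ and the two summations agree on $\Domain$ (and on $\R[\sigma]$).

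Finally, for idempotence it suffices to prove $\T\T\Sum\subseteq\T\Sum$, since the reverse inclusion is the extension-map property already in hand. Given $X\in\T\T\Domain$ with outer witness $B=GX$, $\T\Sum(G)=g$, $\T\Sum(B)=gy$, and inner witness $A=FB$, $\Sum(F)=f$, $\Sum(A)=fx$, I would compose the two: $A = FGX = (FG)X$ with $FG\in\R[\sigma]$, $\Sum(FG)=fg\in\Reg\Codomain$, and $\Sum(A)=fx=f(gy)=(fg)y$, where $x=gy$ by well-definedness of $\T\Sum$ on $B$. Thus $(A,FG,fg,y)$ witnesses $X\in\T\Domain$ with $\T\Sum(X)=y$, giving $\T\T\Sum\subseteq\T\Sum$ and completing the proof.
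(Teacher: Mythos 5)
Your proposal is correct and follows essentially the same route as the paper: well-definedness by cross-multiplying the two witnesses and cancelling the regular product $f_1f_2$, and idempotence by composing the inner and outer witnesses via the product of the polynomials. You simply make explicit the identity $\Sum(FY)=\Sum(F)\Sum(Y)$ and the axiom and functoriality checks that the paper leaves as ``arguments of a similar style.''
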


\begin{proof}
	Our argument mirrors the construction of the field of fractions of an integral domain. Let $(\Domain, \Sum) \in \Sums \R \Codomain$ be arbitrary. We first prove that $\T \Sum$ is a well-defined function on $\T \Domain$. Suppose $X \in \T \Domain$, and let $A \in \Domain$ and $F \in \R[\sigma]$ be as in the definition of telescopic extensions, so that $A = F X$, $\Sum \parent F = f \in \Reg \Codomain$, and $\Sum \parent A = f x$. Now suppose that we have $A\prm \in \Domain$ and $F\prm \in \R[\sigma]$ satisfying the same array of properties, so that $A\prm = F\prm X$, $\Sum \parent{F\prm} = f\prm \in \Reg \Codomain$, and $\Sum \parent{A\prm} = f\prm x\prm$. Then $F A\prm = F F\prm X = F\prm A \in \Domain$, and we see 
	\[
	f f\prm x\prm = \Sum\parent{F A\prm} = \Sum \parent{F F\prm X} = \Sum\parent{F\prm A} = f\prm f x.
	\]
	But the regularity of $f$ and $f\prm$ implies that $f f\prm$ is regular as well, and so $x = x\prm$. Then $\T\Sum$ is well-defined as a function. Arguments of a similar style verify that $\T\Sum$ validates our summation axioms, and by construction $\Domain \subseteq \T\Domain$. Thus $\T$ is an extension map.
	
	It remains to show that $\T\Sum = \T\T\Sum$. So suppose that $X \in \T\T\Domain$. Then there exists $F \in \R[\sigma]$ and $A \in \T\Domain$ such that $\Sum \parent F = f \in \Reg \Codomain$, $\T\Sum \parent A = f x$ for some $x \in \Codomain$, and $A = F X$. But now as $A \in \T\Domain$, there exists $G \in \R[\sigma]$ and $B \in \Domain$ such that $\Sum \parent G = g \in \Reg \Codomain$, $\Sum\parent{B} = g y$ for some $y \in \Codomain$, and $B = G A$. Then 
	\[
	F G X = G A = B,
	\]
	and
	\[
	\Sum\parent{B} = \Sum\parent{G A} = g f x.
	\]
	But $gf = \Sum\parent{GF}$ is regular, so $X \in \T\Domain$, and $\T \Sum = \T\T\Sum$ as claimed.
\end{proof}

If $\Sum = \T \Sum$, so that $\Sum$ is $\T$-closed, we say $\Sum$ is \defi{telescopically closed}.

\begin{obsrem}
	The collection $\TSumCat$ of telescopically closed summations forms a full subcategory of $\SumCat$. Observe that $\T$ is left adjoint to the forgetful functor from $\TSumCat$ to $\SumCat$. 
\end{obsrem}

Proposition \ref{Corollary: T is often the fulfillment} below follows almost verbatim from the proof of \cite[Proposition 2.3]{Dawson}. The only new subtlety is the observation that if $F X = A \in \Domain$ with $\Sum \parent{F} = f \in \Reg \Codomain$ but there exists no $x \in \Codomain$ with $\Sum \parent{A} = fx$, then $X$ is not summable by any extension of $\Sum$.

\begin{prop}\label{Corollary: T is often the fulfillment}
	If $\Sum \in \Sums \R \Codomain$ and $\Codomain$ is an integral domain, then $\T\Sum$ is the fulfillment of $\Sum$.
\end{prop}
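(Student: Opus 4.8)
The plan is to show that the telescopic extension already captures every canonical extension of $\Sum$, so that $\T\Domain$ coincides with the domain of the fulfillment and the two summations agree there. Since $\T$ is an extension map (Theorem \ref{Theorem: Telescopic Extension}), $\T\Sum$ is $\Sum$-canonical, hence compatible with every extension of $\Sum$; thus $\T\Sum$ is contained in the fulfillment and agrees with it on $\T\Domain$. Because a series lies in the domain of the fulfillment exactly when there is a unique value to which it can be summed by a summation compatible with $\Sum$---and any such summation is dominated by an extension of $\Sum$---it suffices to prove that a series summable to a unique value by an extension of $\Sum$ already lies in $\T\Domain$, with matching value.

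The organizing device is the ideal $I_X \coloneqq \Set{G \in \R[\sigma] : GX \in \Domain}$. First I would record that $\Domain$ is an $\R[\sigma]$-module: axiom \ref{Condition: Factors by (1 - sigma)} gives $(1 - \sigma)\Domain \subseteq \Domain$, so $\sigma Y = Y - (1 - \sigma)Y \in \Domain$ for $Y \in \Domain$, and $\Domain$ is closed under multiplication by $\R[\sigma]$; consequently $I_X$ is genuinely an ideal. I would also record the identity $\Sum(GW) = \Sum(G)\Sum(W)$ for $G \in \R[\sigma]$ and $W \in \Domain$, which follows from $\Sum(\sigma^i W) = \Sum(W)$ and linearity, together with the fact that $\Sum(G)$ is the image of $G(1)$ in $\Codomain$.

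The crux is the characterization: $X$ is summable to a value $v$ by some extension of $\Sum$ if and only if $\Sum(GX) = \Sum(G)\, v$ for every $G \in I_X$. For the forward direction, an extension $\Sum^*$ containing $X$ satisfies $\Sum(GX) = \Sum^*(GX) = \Sum(G)\Sum^*(X)$ by the identity above applied inside $\Sum^*$. For the converse I would define $\Sum^*$ on $\Domain + \R[\sigma]X$ by $\Sum^*(B + GX) \coloneqq \Sum(B) + \Sum(G)\, v$ and verify well-definedness (if $B + GX = B\prm + G\prm X$ then $G\prm - G \in I_X$ and $\Sum(B) - \Sum(B\prm) = \Sum((G\prm - G)X) = \Sum(G\prm - G)\,v$), together with the summation axioms, noting that $\Domain + \R[\sigma]X$ is $(1-\sigma)$-stable and that $((1 - \sigma)G)(1) = 0$ makes $\Sum^*$ factor through $(1 - \sigma)\parent{\Domain + \R[\sigma]X}$. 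Uniqueness of $v$ is where the integral-domain hypothesis enters: two solutions $v, v\prm$ satisfy $\Sum(G)(v - v\prm) = 0$ for all $G \in I_X$, so uniqueness holds exactly when some $G_0 \in I_X$ has $\Sum(G_0) \neq 0$, i.e. $\Sum(G_0) \in \Reg\Codomain$. Such a $G_0$, with $A \coloneqq G_0 X \in \Domain$, $f \coloneqq \Sum(G_0)$, and $\Sum(A) = \Sum(G_0)\, v = f v$, is precisely a telescopic witness, so $X \in \T\Domain$ and $\T\Sum(X) = v$. Conversely, a telescopic witness $(F, A, f, x)$ gives $F \in I_X$ with $\Sum(F) = f$ regular; the relation $G\cdot(FX) = F\cdot(GX)$ and the identity above yield $\Sum(G)fx = f\Sum(GX)$, and cancelling the regular $f$ gives $\Sum(GX) = \Sum(G)\,x$ for all $G \in I_X$, so $X$ is canonically summable to $x$.

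I expect the main obstacle to be the converse of the characterization, namely checking that the assignment $\Sum^*$ above is well defined and satisfies all four summation axioms, and aligning the uniqueness criterion with regularity---this is exactly where $\Codomain$ being a domain is indispensable, since a nonzero zero-divisor $\Sum(G_0)$ would fail to force $v = v\prm$. The subtlety flagged after the statement---that a single $F$ with $FX \in \Domain$, $\Sum(F)$ regular, and $\Sum(F) \nmid \Sum(FX)$ already obstructs summability by any extension---is subsumed here as the failure of the existence of a solution $v$ to $\Sum(FX) = \Sum(F)\,v$.
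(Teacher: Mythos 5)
Your proof is correct and follows essentially the route the paper intends: it reconstructs the argument of \cite[Proposition 2.3]{Dawson} that the paper's proof defers to, characterizing the possible sums of $X$ under extensions of $\Sum$ via the ideal $\Set{G \in \R[\sigma] : GX \in \Domain}$ and using the integral-domain hypothesis to cancel a regular $\Sum(G_0)$. You also correctly absorb the one new subtlety the paper flags, namely that a witness $F$ with $\Sum(F)$ regular but $\Sum(FX) \notin \Sum(F)\,\Codomain$ obstructs summability of $X$ by every extension of $\Sum$.
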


This proposition is easily adapted to show that if $\Codomain \simeq \Codomain_1 \times \dots \times \Codomain_m$ where $\Codomain_j$ is an integral domain for each $j$, then $\T\Sum$ is the fulfillment of $\Sum$. However, the following example shows $\T \Sum$ need not be the fulfillment of $\Sum$ when $\Codomain$ is not of a direct product of integral domains.

\begin{example}
	Let $\R \coloneqq \bbC[\epsilon_1, \epsilon_2, \dots]/(\epsilon_1^2, \epsilon_2^2, \dots)$, let $\Codomain \coloneqq \R$, and let $X$ be transcendental over $\bbC[\sigma]$. Let
	\[
	\Domain \coloneqq \R[\sigma] \oplus \R[\sigma] \cdot \epsilon_1 X \oplus \R[\sigma] \cdot \epsilon_2 X \oplus \dots,
	\]
	and let $\Sum : \Domain \to \Codomain$ be given by
	\[
	\Sum : F_0 + \epsilon_1 X F_1 + \epsilon_2 X F_2 + \dots \mapsto \Add(F_0) + \epsilon_1 \Add(F_1) + \epsilon_2 \Add(F_2) + \dots;
	\]
	note that this is a finite sum. Then $X \not\in \T\Domain$, since $F X \in \Domain$ implies $F$ is nilpotent, but $X \mapsto 1$ in the fulfillment of $\Sum$.
\end{example}

In general, we face an interesting question.

\begin{quest}
	Let $\Sum \in \Sums \R \Codomain$ be arbitrary. If $\Codomain$ is not an integral domain, then what is the fulfillment of $\Sum$?
\end{quest}

We close this section with two examples that highlight the delicacy of $\T$ and its dependence on the structure of $\Codomain$.

\begin{example}\label{Example: restricting codomain causes issues}
	Let $\R = \bbZ$, let $\Codomain_1 \coloneqq \R = \bbZ$, and let $\Codomain_2 \coloneqq \bbQ$. We observe that there is a natural inclusion $\iota : \Codomain_1 \into \Codomain_2$, so every summation $\Sum \in \Sums \R {\Codomain_1}$ has a natural image $\iota \Sum \in \Sums \R {\Codomain_2}$. For $j \in \set{1,2}$, let $\Add_j : \R[\sigma] \to \Codomain_j$ be the summation on finitely supported series, so that $\Add_2 = \iota \Add_1$. 
	
	We claim $\T \Add_2 \neq \iota \T \Add_1$. Indeed, the series 
	\[
	G_{-1} \coloneqq \frac{1}{1 + \sigma} = 1 - 1 + 1 - 1 + \dots
	\]
	sums to $\frac{1}{2}$ under $\T\Add_2$, but cannot be summed by $\T\Add_1$, since $\frac{1}{2} \not\in \Codomain_1$. Thus extending $\Codomain$ may extend the domain of $\T \Sum$.
\end{example}

\begin{example}\label{Example: extending codomain causes issues}
	Let $\R \coloneqq \bbQ[t]$, let $\Codomain_1 \coloneqq \R = \bbQ[t]$ and let $\Codomain_2 \coloneqq \bbQ[t,u]/(tu)$. Again, there is a natural inclusion $\iota : \Codomain_1 \into \Codomain_2$, so every summation $\Sum \in \Sums \R {\Codomain_1}$ has a natural image $\iota \Sum \in \Sums \R {\Codomain_2}$. For $j \in \set{1,2}$, let $\Add_j : \R[\sigma] \to \Codomain_j$ be the summation on finitely supported series, so that $\Add_2 = \iota \Add_1$. 
	
	We claim $\T \Add_2 \neq \iota \T \Add_1$. Indeed, the series 
	\[
	\frac{t}{1 - \sigma + t\sigma^2} = t + t + t (1 - t) + t (1 - 2 t) + t (1 - 3 t + t^2) +  t (1 - 4 t + 3 t^2) + \dots
	\]
	sums to $1$ under $\T\Add_1$, but cannot be summed by $\T\Add_2$, since $\Add\parent{1 - \sigma + t \sigma^2} = t$ is regular in $\Codomain_1$ but is a zerodivisor in $\Codomain_2$. Thus extending $\Codomain$ may restrict the domain of $\T \Sum$.
\end{example}

\section{Multiplicative Summations}\label{Section: Multiplicative Summations}

While linearity and shift-invariance have been considered to be the properties that define a summation, in fact most standard summation methods also preserve products.

\begin{definition}
	We call a summation \defi{premultiplicative} if, for all $n$, we have
	\begin{equation*}
		\prod_{j=1}^n\Sum \parent{A_j}= \Sum\parent{\prod_{j=1}^n A_j}
	\end{equation*}
	whenever both sides are defined, and \defi{multiplicative} if moreover the existence of the left side implies the existence of the right. 
\end{definition}

\begin{obsrem}For a summation to be multiplicative it is sufficient that $\Sum(A)\Sum(B) = \Sum(AB)$ whenever the left side exists. Premultiplicativity cannot be reduced to the pairwise case; it is possible that the series $A,B,C,AB,BC$, and $CA$ are all summable, but $ABC$ is not (see Example \ref{nPremultButNotPremult}). 
\end{obsrem}

The tuple $(\R, \Codomain, \Domain, \Sum)$ is a multiplicative summation on $\R$ to $\Codomain$ if it satisfies axioms \ref{Condition: AreSeries}, \ref{Condition: ExtendsAdd}, \ref{Condition: Factors by (1 - sigma)} from Section \ref{Section: Introduction}, and the following strengthened version of axiom \ref{Condition: IsModuleMorphism}:
\begin{enumerate}[label=(\Roman*$\prm$)]
	\setcounter{enumi}{1}
	\item The set $\Domain$ is an $\R$-algebra, and the map $\Sum$ is an $\R$-algebra homomorphism; \label{Condition: IsAlgebraMorphism}
\end{enumerate}

\noindent In this context, axiom \ref{Condition: Factors by (1 - sigma)} gives us the following commutative diagram of $\R$-algebras:
	\[
		\xymatrix@R=50pt{
		\Domain \ar[rr]^{\Sum} \ar@{->>}[dr]& & \Codomain \\
		& \Domain /(1 - \sigma) \ar@{.>}[ur]_{\widetilde{\Sum}} &
		}
	\]

We write $\PSums \R \Codomain $ and $\MSums \R \Codomain$ respectively for the sets of premultiplicative summations and multiplicative summations on $\R$ to $\Codomain$.
We take the category $\pMSumCat$ to be the full subcategory of $\SumCat$ with objects the premultiplicative summations, and we likewise take the category $\MSumCat$ to be the full subcategory of $\SumCat$ with objects the multiplicative summations. For further discussion of multiplicative summations in the cases $\R = \Codomain = \bbR$ and $\R = \Codomain = \bbC$, the reader is referred to \cite[Chapter 10]{Hardy}.

\begin{definition}
	If a summation has an extension which is multiplicative, we call it \defi{weakly multiplicative}.
\end{definition}

We write $\WSums \R \Codomain$ for the set of weakly multiplicative summations on $\R$ to $\Codomain$. We take $\wMSumCat$ to be the full subcategory of $\SumCat$ with objects the weakly multiplicative summations. 

Clearly 
\[
	\MSums \R \Codomain \subseteq \WSums \R \Codomain \subseteq \PSums \R \Codomain \subseteq \Sums \R \Codomain,
\]
and likewise
\[
	\MSumCat \subseteq \wMSumCat \subseteq \pMSumCat \subseteq \SumCat
\]
as categories. 

Analysis abounds with examples of (weakly) multiplicative summations. We recall a few examples here.

\begin{itemize}
  	\item The summation $(\R[\sigma], \Add) \in \Sums \R \Codomain$, which is defined only on finitely-supported series, is multiplicative.

	\item While a product of two convergent series $A$ and $B$ need not converge (consider the product of $(1-1/\sqrt{2}+1/\sqrt{3}-\cdots)$ with itself), Abel's product theorem \cite[Theorem 162]{Hardy} tells us the classical summation $(\Domain_c, \Convsum) \in \Sums \bbC \bbC$ on convergent series is premultiplicative.
	
	\item By Cauchy's product theorem \cite[Theorem 160]{Hardy}, the restriction $(\Domain_a, \Abssum) \in \Sums \bbC \bbC$ of $(\Domain_c, \Convsum)$ to absolutely convergent series is multiplicative.
	
	\item In $\bbQ_p$, a series is convergent if and only if it is absolutely convergent \cite[page 76]{Koblitz}. Thus, essentially by Cauchy's product theorem, the summation $(\Sum_p, \Domain_p) \in \Sums {\bbQ_p} {\bbQ_p}$ on $p$-adically convergent series is multiplicative.	
	
	\item  By Ces\`aro's product theorem \cite[Theorems 163, 164]{Hardy}), the Ces\`aro-H\"older summation $(\CHdomain, \CHsum) \in \Sums \bbC \bbC$ is a multiplicative summation that extends $\Convsum$. We conclude that $\Convsum$ is weakly multiplicative, not just premultiplicative.
	
	\item The Abel summation $(\Abeldomain, \Abelsum) \in \Sums \bbC \bbC$ is another multiplicative summation. It extends $\CHsum$ \cite[page 228]{Hardy}, and \textit{a fortiori} extends $\Convsum$.
	
	\item The Borel summation $(\Boreldomain, \Borelsum) \in \Sums \bbC \bbC$ is a weakly multiplicative summation that extends $\Convsum$ (see \cite[Theorems 122, 189]{Hardy}, as well as Doetsch's Theorem \cite[page 246]{Hardy}).
	
	\item For every connected set $\Omega$ containing $0$ with $1$ as a limit point, the Euler sum $(\Eulerdomain \Omega, \Eulersum \Omega)$ is a multiplicative summation that extends $\Abelsum$, and \textit{a fortiori} extends $\Convsum$. This follows by the uniqueness of analytic continuations, since the product of analytic functions is analytic. 
\end{itemize}

The definition of consistency carries over unchanged to the multiplicative case; so the largest common multiplicative restriction of two or more multiplicative restrictions is still their intersection. 

\begin{definition}
	If a set of multiplicative summations have a common multiplicative extension, we will call them \defi{multiplicatively compatible}.
\end{definition}
	
The next example shows that multiplicative compatibility, even in its pairwise form (see Example \ref{nMultCompButNotMultComp} below), is strictly stronger than consistency. 

\begin{example}\label{ConsistButNotMultComp} 
Let $(\Domain,\Sum) \in \MSums \R \Codomain$ be a multiplicative summation, and $V$ an invertible series that is transcendental with respect to $\Domain$. We may define a summation $\Sum' : \Domain[V] \to \Codomain$ so that 
\[
\Sum':\sum_{j=0}^nA_jV^j \mapsto \Sum(A_0),
\]
and a second summation $\Sum'' : \Domain[V^{-1}] \to \Codomain$ so that 
\[
\Sum'':\sum_{j=0}^nA_jV^{-j} \mapsto \Sum(A_0).
\]
Clearly $\Domain[V]\cap\Domain[V^{-1}] = \Domain$, so the summations $\Sum'$ and $\Sum''$ are consistent (and thus compatible). However, there can be no common multiplicative extension, as $\Sum'(V)=\Sum''(V^{-1})= 0$, but $\Sum(VV^{-1}) = \Sum(1) = 1$.
\end{example}

\begin{obsrem} In the construction of counterexamples, it is often convenient to extend a summation $(\Domain,\Sum)$ by a series $V$ that is transcendental with respect to $\Domain$. In a multiplicative extension, $\Sum'(V)$ can be chosen freely; in a nonmultiplicative extension, the values of $V$ and its powers are linearly independent. Series transcendental with respect to a given ring of series can be constructed, or shown to exist, in several ways.
\begin{itemize}
  \item Over $\bbQ[\sigma]$, any convergent series in $\bbQ[[\sigma]]$ with transcendental sum is transcendental.
  \item For any finite or countable ring $R$, $R[\sigma]$ is countable and $R[[\sigma]]$ is uncountable; thus there are uncountably many series in $R[[\sigma]]$ algebraically independent over $R[\sigma]$ \cite[Example 1.1]{Dawson-Molnar-1}.
  \item For any ring $R$, a Liouville-type argument shows that $\sum_{n=0}^\infty \sigma^{n!}$ is transcendental over $R[\sigma]$.       
  \item If $\Domain$ is the subring of $\bbC[[\sigma]]$ consisting of series that exhibit at most exponential growth, any series exhibiting superexponential growth is transcendental over $\Domain$ \cite[Corollary 1.4]{Dawson-Molnar-1}. Iterated application of \cite[Theorem 3.2]{Dawson-Molnar-1} yields a sequence $X_1, X_2, \dots$ of series algebraically independent over the ring of series exhibiting at most exponential growth.  
\end{itemize}
\end{obsrem} 

When $\Sum$ and $\Sum'$ are multiplicatively compatible, we can define the least common multiplicative extension $\Sum \triangledown \Sum'$. It is an  extension, generally proper, of $\Sum \vee \Sum'$.

An example similar to Example \ref{ConsistButNotMultComp} shows that not every summation is premultiplicative.

\begin{example}\label{NotPremult}
With $(\Domain,\Sum)$ and $V$ as in Example \ref{ConsistButNotMultComp}, let 
\[
\Domain\prm \coloneqq \Domain \oplus \Domain V \oplus \Domain V\inv,
\]
and define $\Sum\prm : \Domain\prm \to \Codomain$ by
\[
\Sum\prm: A+BV+CV^{-1}  \mapsto \Sum(A).
\]
Then $\Sum\prm(V)=\Sum\prm(V^{-1})= 0$ but $\Sum\prm(VV^{-1}) = \Sum\prm(1) = 1$.
\end{example}

The definition of a premultiplicative summation involves products of all orders; this is not implied by the corresponding statement for pairwise products (or indeed by products of any other order.) The next example exhibits a summation such that $\Sum(A)\Sum(B) = \Sum(AB)$ whenever both sides are defined, but where this can fail for products of three series. 

\begin{example}\label{nPremultButNotPremult} 
Let $(\Domain,\Sum) \in \MSums \R \Codomain$ be a multiplicative summation, let $V_1$, $V_2$ be invertible series that are algebraically independent over $\Domain$, and let $V_3 \coloneqq (V_1V_2)^{-1}$. Let 
\[
\Domain' \coloneqq \Domain \oplus \Domain \cdot V_1 \oplus \Domain \cdot V_2 \oplus \Domain \cdot V_3,
\]
and define $\Sum\prm : \Domain\prm \to \Codomain$ by
\[
\Sum': A_0+A_1V_1 + A_2V_2 + A_3V_3 \mapsto \Sum(A_0)
\]
The only products of elements of $\Domain'$ that are themselves in $\Domain'$ have the form 
\[
A(A_0+A_1V_1 + A_2V_2 + A_3V_3) \ \text{for} \ A,A_0,A_1,A_2,A_3 \in \Domain.
\]
In particular, $V_1V_2$, $V_2V_3$, and $V_3V_1$ are not in $\Domain'$, so $(\Domain',\Sum')$ is trivially 2-premultiplicative. However, $\Sum\prm$ is not premultiplicative, as 
\[
\Sum'(V_1V_2V_3)=1 \neq \Sum'(V_1)\Sum'(V_2)\Sum'(V_3).
\]
\end{example}

Example \ref{nPremultButNotPremult} can be easily extended to yield a summation that is $k$-premultiplicative but not $(k+1)$-premultiplicative for any $k \geq 2$.

Our next example shows that weak multiplicativity is strictly stronger than premultiplicativity.

\begin{example}\label{PremultButNotWeakMult}
 Let $V_1,$ $V_2$ be algebraically independent over $\R[\sigma]$, let $V_3 \coloneqq (V_1V_2)^2 - V_1V_2$, and let $\Sum$ be the minimal extension of $\Add$ taking $V_1,$ $V_2$, and $V_3$ to $1$. The domain $\Domain$ of $\Sum$ is
\[
\Domain = \R[\sigma] \oplus \R[\sigma] \cdot V_1 \oplus \R[\sigma] \cdot V_2 \oplus \R[\sigma] \cdot V_3,
\]
and $\Sum$ is trivially premultiplicative. But $\Sum$ has no multiplicative extension, because if $V_1,V_2\mapsto 1$, then in a multiplicative summation, we must have $V_3 \mapsto 0$.
\end{example}

The following example exhibits three multiplicative summations that are pairwise multiplicatively compatible but not multiplicatively compatible overall. Again, there is an obvious generalization to the $n$-fold case.

\begin{example}\label{nMultCompButNotMultComp} 
Let $V_1,V_2,V_3$ be as in Example \ref{nPremultButNotPremult}, and let 
\[
\Domain_i \coloneqq \Domain \oplus \Domain \cdot V_i
\]
for $i=1,2,3$, and 
\[
\Sum_i: A_0 + A_i V_i \mapsto \Sum(A_0).
\]
Then $(\Domain_1,\Sum_1)$ and $(\Domain_2,\Sum_2)$ have a common multiplicative extension such that
$$\Sum_{12}:\sum_{j,k} A_{jk}V_1^j V_2^k \mapsto \Sum(A_{0}).$$
Extensions $\Sum_{13}$ and $\Sum_{23}$ can be defined analogously; but any common extension of $(\Domain_1,\Sum_1)$, $(\Domain_2,\Sum_2)$, and $(\Domain_3,\Sum_3)$  would map $V_1V_2V_3 = 1$ to $0$, which is impossible.
\end{example} 

\begin{definition}
	If $\Sum\prm$ is multiplicatively compatible with every multiplicative extension of $\Sum$, we say $\Sum\prm$ is \defi{multiplicatively $\Sum$-canonical}. The unique multiplicatively $\Sum$-canonical summation that extends every multiplicatively $\Sum$-canonical summation is called the \defi{multiplicative fulfillment of $\Sum$}. If a summation $\Sum$ is its own multiplicative fulfillment, we say that $\Sum$ is \defi{multiplicatively fulfilled}.
\end{definition}

This definition is adequate despite Example \ref{nMultCompButNotMultComp}.  Certainly if $\{\Sum_1,\dots,\Sum_n\}$ are not jointly multiplicatively compatible, there is no point asking whether $\Sum_0$ is jointly multiplicatively compatible with all of them. On the other hand, if $\{\Sum_1,\dots,\Sum_n\}$ are jointly multiplicatively compatible, and 
$\Sum'$ is multiplicatively canonical, then $\Sum'$ is by hypothesis multiplicatively compatible with $\MJoin_{j=0}^n \Sum_j$.

The multiplicative fulfillment of $\Sum$ is the largest ``choice-free'' multiplicative extension of $\Sum$. The fulfillment of $\Sum$ in $\SumCat$ is defined ``pointwise'': that is, it consists of precisely the series that can be summed to a unique value by a $\Sum$-compatible summation. This is not true for multiplicative fulfillments, as Example \ref{nMultCompButNotMultComp} shows.

The paragraph before Examples \ref{Example: Euler summation does not extend Borel summation} and \ref{Example: Borel summation does not extend Euler summation} tells us that Borel summation and Euler summation are compatible, but with our new definitions in hand, we may pose a stronger, subtler question.

\begin{quest}
	Are the summations $\Borelsum$ and $\Eulersumspecial$ multiplicatively compatible?
\end{quest}

\begin{definition}
	We say that a functor $\calF$ of (weakly) multiplicative summations \defi{preserves multiplicative compatibility} if $\calF \Sum$ and $\calF \Sum\prm$ are multiplicatively compatible whenever $\Sum$ and $\Sum'$ are multiplicatively compatible; such a map also preserves multiplicative extensions. The functor $\calF$ is \defi{multiplicatively canonical} if $\calF \Sum$ is multiplicatively $\Sum$-canonical for all multiplicative $\Sum$. We carry over the notions of extension map, idempotence, and functors subsuming functors directly from $\SumCat$ to functors on subcategories of $\SumCat$.
\end{definition}

Our next proposition proves that telescopic extension respects multiplicativity, and indeed multiplicative compatibility.

\begin{proposition}\label{Proposition: Telescopic extensions preserve multiplication}
	If $\Sum$ is a multiplicative (respectively weakly multiplicative, premultiplicative)  summation, so is $\T\Sum$. Thus
	\begin{eqnarray*}
		\T &:& \MSumCat \to \MSumCat,\\
		\T &:& \wMSumCat \to \wMSumCat, \mbox{ and }\\
		\T &:& \pMSumCat \to \pMSumCat
	\end{eqnarray*}
	are extension maps.
\end{proposition}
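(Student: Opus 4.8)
<br>

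The plan is to prove each of the three statements by taking summable witnesses for a product of sums in $\T\Sum$ and reducing, via the telescoping data, to an identity that can be verified using the corresponding multiplicative hypothesis on $\Sum$ itself. Since Theorem \ref{Theorem: Telescopic Extension} already establishes that $\T\Sum$ is a summation extending $\Sum$, the only new content is that $\T\Sum$ inherits (pre/weak/full) multiplicativity, and that $\T$ preserves multiplicative compatibility.

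First I would handle \emph{premultiplicativity}. Suppose $X_1, \dots, X_n \in \T\Domain$ and that $\prod_j X_j \in \T\Domain$ as well, so that all the relevant sums exist. For each $j$ choose telescoping data $F_j \in \R[\sigma]$, $A_j = F_j X_j \in \Domain$ with $\Sum(F_j) = f_j \in \Reg\Codomain$ and $\Sum(A_j) = f_j x_j$, where $x_j = \T\Sum(X_j)$. Set $F \coloneqq \prod_j F_j \in \R[\sigma]$ and $A \coloneqq \prod_j A_j = F \prod_j X_j \in \Domain$ (the product lies in $\Domain$ because $\Domain$ is an $\R$-algebra when $\Sum$ is premultiplicative). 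Applying premultiplicativity of $\Sum$ to the $F_j$ gives $\Sum(F) = \prod_j f_j$, which is regular as a product of regular elements; applying it to the $A_j$ gives $\Sum(A) = \prod_j (f_j x_j) = \bigl(\prod_j f_j\bigr)\bigl(\prod_j x_j\bigr)$. Thus $F$, $A$ witness that $\prod_j X_j \in \T\Domain$ with $\T\Sum(\prod_j X_j) = \prod_j x_j = \prod_j \T\Sum(X_j)$, which is exactly premultiplicativity of $\T\Sum$. This is the computational core, and it is essentially the same bookkeeping as in the well-definedness argument of Theorem \ref{Theorem: Telescopic Extension}.

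For \emph{multiplicativity} I must strengthen this: when $\Sum$ is multiplicative, the existence of the left-hand product $\prod_j \T\Sum(X_j)$ must force $\prod_j X_j \in \T\Domain$. Here I would again form $F = \prod_j F_j$ and $A = \prod_j A_j$; multiplicativity of $\Sum$ guarantees $A = \prod_j A_j \in \Domain$ from the summability of the $A_j$, and $F \in \R[\sigma] \subseteq \Domain$ automatically, with $\Sum(F) = \prod_j f_j$ regular. Since $A = F \cdot \prod_j X_j$, the telescoping data $(F, A)$ directly certify $\prod_j X_j \in \T\Domain$, giving the required implication. The main obstacle to watch is the regularity condition in Definition \ref{Definition: Telescopic Extension}: I must confirm that a product of regular elements of $\Codomain$ is regular (true, since zerodivisors form a multiplicatively saturated complement), so that $\prod_j f_j \in \Reg\Codomain$ and the cancellation recovering $\prod_j x_j$ is valid. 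For \emph{weak multiplicativity}, I would argue that if $\Sum$ has a multiplicative extension $\Sum^\sharp$, then $\T\Sum^\sharp$ is multiplicative by the case just proved, and $\T\Sum \subseteq \T\Sum^\sharp$ since $\T$ is a functor of summations (Proposition \ref{Proposition: functor of summations}); hence $\T\Sum$ is weakly multiplicative.

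Finally, to conclude that the three restricted maps are genuine extension maps on their respective subcategories, I note that $\T$ is already an idempotent extension map on $\SumCat$ by Theorem \ref{Theorem: Telescopic Extension}, so $\T\Sum \supseteq \Sum$ and $\T$ preserves extensions and compatibility in general; the work above shows that $\T$ carries each of $\MSumCat$, $\wMSumCat$, $\pMSumCat$ into itself, so the restrictions are well-defined functors, and they remain extension maps because the extension property $\T\Sum \supseteq \Sum$ is inherited verbatim. The one subtlety worth flagging explicitly is that preservation of \emph{multiplicative} compatibility (needed for $\T$ to be a functor on these subcategories in the sense of the preceding definition) follows because a common multiplicative extension $\Sum^\sharp$ of $\Sum$ and $\Sum\prm$ yields the common multiplicative extension $\T\Sum^\sharp$ of $\T\Sum$ and $\T\Sum\prm$, again using functoriality of $\T$ together with the multiplicativity of $\T\Sum^\sharp$.
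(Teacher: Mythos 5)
Your treatment of the multiplicative and weakly multiplicative cases is correct and matches the paper: for a multiplicative $\Sum$ the domain is an algebra, so $\prod_j A_j \in \Domain$ and the pair $\bigl(\prod_j F_j, \prod_j A_j\bigr)$ is a valid telescoping witness for $\prod_j X_j$; and the weakly multiplicative case follows from functoriality of $\T$ applied to a multiplicative extension $\Sum^\sharp$, exactly as in the paper.

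The premultiplicative case, however, has a genuine gap. You justify $A = \prod_j A_j \in \Domain$ by asserting that ``$\Domain$ is an $\R$-algebra when $\Sum$ is premultiplicative,'' but this is false: premultiplicativity only requires $\Sum\parent{\prod_j A_j} = \prod_j \Sum(A_j)$ \emph{whenever both sides are defined}, and imposes no closure of $\Domain$ under products. The paper's own Examples \ref{nPremultButNotPremult} and \ref{PremultButNotWeakMult} exhibit premultiplicative summations whose domains are modules of the form $\Domain \oplus \Domain V_1 \oplus \Domain V_2 \oplus \Domain V_3$ that are manifestly not rings (indeed $V_1V_2 \notin \Domain\prm$ there, and the summation is premultiplicative precisely because so few products land in the domain). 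So your witness $\prod_j A_j$ need not exist in $\Domain$, and the argument collapses. The paper's proof avoids this by exploiting the hypothesis that the product $X_3 = X_1X_2$ is \emph{itself} in $\T\Domain$, hence comes with its own telescoping data $A_3 = F_3X_3$: one then considers $F_3A_1A_2 = F_1F_2A_3$, which lies in $\Domain$ not because $A_1A_2$ does, but because it is a polynomial multiple of $A_3 \in \Domain$; premultiplicativity of $\Sum$ is then applied to the factorization $(F_3A_1)\cdot A_2$, whose factors and product all demonstrably lie in $\Domain$, and cancellation of the regular element $f_1f_2f_3$ finishes the argument. You need this (or an equivalent) device; as written, your premultiplicative case proves nothing beyond what your multiplicative case already establishes.
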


\begin{proof}
	Let $(\Domain, \Sum) \in \Sums \R \Codomain$ be a premultiplicative summation, and suppose that $X_1, X_2,$ and $X_3$ are series in $\T \Domain$. Then there exist series $A_1, A_2, A_3, F_1, F_2, F_3 \in \Domain$, and constants $f_1, f_2, f_3, x_1, x_2, x_3 \in \Codomain$ such that for each $j \in \set{1, 2, 3}$,
	\begin{itemize}
		\item We have $A_j = F_j X_j$;
		\item We have $\Sum (A_j) = f_j x_j$ and $\Sum (F_j) = f_j$;
		\item We have $F_j \in R[\sigma]$ and $f_j \in \Reg \Codomain$. 
	\end{itemize}
	Recall that for any $F$ finitely supported and $X \in \Domain$, we have $\Sum \parent{F X} = \Sum \parent F \Sum \parent X$. Now suppose that $X_1 X_2 = X_3$, and consider the series 
	\[
		F_3 A_1 A_2 = F_1 F_2 F_3 X_1 X_2 = F_1 F_2 F_3 X_3 = F_1 F_2 A_3 \in \Domain.
	\]
	As $F_3 A_1, A_2 \in \Domain$, and $F_3 A_1 A_2 \in \Domain$, we have by premultiplicativity that
	\[
		f_1 f_2 f_3 x_3 = \Sum\parent{F_1 F_2 A_3} = \Sum\parent{F_3 A_1 A_2} = \Sum\parent{F_3 A_1} \Sum\parent{A_2} = f_3 f_1 x_1 f_2 x_2 = f_1 f_2 f_3 x_1 x_2.
	\]
	Then as $f_1 f_2 f_3$ is regular, we see $x_3 = x_1 x_2$ and so $\T\Sum\parent{X_1 X_2} = \T\Sum\parent{X_1} \T\Sum\parent{X_2}$. If moreover $\Sum$ is multiplicative, then we know $X_1 X_2 \in \T\Domain$ whenever $X_1, X_2 \in \T\Domain$, and the same argument goes through unimpeded. Finally, as $\T$ preserves extensions, if $\Sum$ is weakly multiplicative with multiplicative extension $\Sum\prm$, then $\T\Sum$ is also weakly multiplicative with  multiplicative extension $\T\Sum'$. Thus $\T$ takes objects in $\MSumCat$ to objects in $\MSumCat$, objects in $\wMSumCat$ to objects in $\wMSumCat$, and objects in $\pMSumCat$ to objects in $\pMSumCat$.
	
	Now as $\T : \SumCat \to \SumCat$ preserves extensions, it induces defines extension maps 
	\begin{eqnarray*}
		\T &:& \MSumCat \to \MSumCat,\\
		\T &:& \wMSumCat \to \wMSumCat, \mbox{ and }\\
		\T &:& \pMSumCat \to \pMSumCat
	\end{eqnarray*}
	as desired.
\end{proof}

\begin{corollary}\label{Corollary: Telescopic extensions are multiplicatively canonical}
	If $\Sum$ is weakly multiplicative, then $\T \Sum$ is a multiplicatively canonical extension of $\Sum$.
\end{corollary}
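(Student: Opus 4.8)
The plan is to reduce the statement to functoriality of $\T$ together with Proposition \ref{Proposition: Telescopic extensions preserve multiplication}, so that essentially no fresh computation is required. First I would record the easy half: $\T\Sum$ is genuinely an extension of $\Sum$, which is immediate since $\T$ is an extension map (Theorem \ref{Theorem: Telescopic Extension}); and because $\Sum$ is weakly multiplicative, Proposition \ref{Proposition: Telescopic extensions preserve multiplication} guarantees that $\T\Sum$ is again weakly multiplicative. So $\T\Sum$ is a legitimate candidate for a multiplicatively $\Sum$-canonical extension, and it only remains to verify the canonicity clause.

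The heart of the argument is to check that $\T\Sum$ is multiplicatively compatible with every multiplicative extension of $\Sum$. So let $\Sum\prm$ be an arbitrary multiplicative extension of $\Sum$. The key observation is that $\T\Sum\prm$ simultaneously dominates both $\T\Sum$ and $\Sum\prm$. Indeed, from $\Sum \subseteq \Sum\prm$ and the fact that $\T$ preserves extensions (Proposition \ref{Proposition: functor of summations}) we obtain $\T\Sum \subseteq \T\Sum\prm$; and since $\T$ is an extension map we also have $\Sum\prm \subseteq \T\Sum\prm$. Because $\Sum\prm$ is multiplicative, the multiplicative clause of Proposition \ref{Proposition: Telescopic extensions preserve multiplication} tells us that $\T\Sum\prm$ is itself multiplicative. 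Hence $\T\Sum\prm$ is a single multiplicative summation extending both $\T\Sum$ and $\Sum\prm$, which is exactly what it means for $\T\Sum$ and $\Sum\prm$ to be multiplicatively compatible.

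As $\Sum\prm$ ranges over all multiplicative extensions of $\Sum$, this shows $\T\Sum$ is multiplicatively compatible with each of them; that is, $\T\Sum$ is multiplicatively $\Sum$-canonical. Combined with the first paragraph, $\T\Sum$ is a multiplicatively canonical extension of $\Sum$, as claimed.

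The only delicate point—and the one I would state carefully—is the bookkeeping distinguishing genuinely multiplicative from merely weakly multiplicative summations. Multiplicative compatibility is defined through the existence of a \emph{common multiplicative} extension, so the witnessing extension $\T\Sum\prm$ must be honestly multiplicative rather than weakly so. This is precisely what the multiplicative clause of Proposition \ref{Proposition: Telescopic extensions preserve multiplication} delivers once we use that $\Sum\prm$ (not just $\Sum$) is multiplicative. Beyond invoking the correct clause of that proposition, there is no real obstacle: all the substantive work—that $\T$ preserves both multiplicativity and extensions—has already been done.
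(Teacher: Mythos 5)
Your proposal is correct and follows the route the paper intends: the corollary is stated without a separate proof precisely because, as you observe, for any multiplicative extension $\Sum\prm$ of $\Sum$ the summation $\T\Sum\prm$ is multiplicative by Proposition \ref{Proposition: Telescopic extensions preserve multiplication} and extends both $\T\Sum$ (functoriality of $\T$) and $\Sum\prm$ ($\T$ is an extension map), witnessing multiplicative compatibility. Your closing remark correctly isolates the only point of care, namely that the witness must be honestly multiplicative, which is exactly what the multiplicative clause of that proposition supplies.
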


\section{Weakly Multiplicative Summations}\label{Section: Weakly Multiplicative Summations}

We now have a firm understanding of telescopic extensions, and a repository of examples to work with. We have demonstrated that $\wMSumCat$ is properly contained in $\pMSumCat$, albeit only with artificial examples. Let us turn our attention to the relationship between weakly multiplicative summations and multiplicative summations.

\begin{defn}\label{Definition: Multiplicative Extension}
	For any weakly multiplicative summation $(\Domain, \Sum) \in \WSums \R \Codomain$, the \defi{multiplicative extension} $(\M \Domain, \M\Sum)$ of $(\Domain, \Sum)$ is defined as follows. For $X \in \R[[\sigma]]$, we say $X \in \M\Domain$ if $X = \sum_{i = 1}^\ell \prod_{j = 1}^{k_i} X_{i,j}$ for some $\ell, k_i \in \bbN$ and $X_{i,j} \in \Domain$. We define
	\begin{align*}
		\M\Sum &: \M\Domain \to \Codomain, \\
		\M\Sum &: X \mapsto  \sum_{i = 1}^\ell \prod_{j = 1}^{k_i} \Sum \parent{X_{i,j}} \ \text{if} \ X \ \text{is as above.}
	\end{align*}
\end{defn}

\begin{theorem}\label{Theorem: Multiplicative Extension}
	The functor of summations
	\[
		\M : \wMSumCat \to \MSumCat
	\]
	is an idempotent extension map which sends every weakly multiplicative summation to its unique minimal multiplicative extension.
\end{theorem}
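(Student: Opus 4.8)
The plan is to verify the statement in four parts, mirroring the claims in the theorem: that $\M\Sum$ is well-defined, that it is multiplicative (hence $\M$ lands in $\MSumCat$), that $\M$ is an idempotent extension map, and finally that $\M\Sum$ is the \emph{minimal} multiplicative extension. First I would establish that $(\M\Domain, \M\Sum)$ is a well-defined summation. The set $\M\Domain$ is manifestly closed under addition and multiplication (sums of products of elements of $\Domain$ are again such sums), and contains $\Domain \supseteq \R[\sigma]$, so $\M\Domain$ is an $\R$-algebra with $\R[\sigma] \subseteq \M\Domain \subseteq \R[[\sigma]]$. The delicate point is well-definedness of the \emph{value} $\M\Sum(X)$: a given $X$ may admit several representations $\sum_i \prod_j X_{i,j}$, and I must check they all yield the same output. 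Here I would use that $\Sum$ is weakly multiplicative, so it has a multiplicative extension $\Sum\prm \in \MSums \R \Codomain$. Since each $X_{i,j} \in \Domain$, we have $\Sum(X_{i,j}) = \Sum\prm(X_{i,j})$, and because $\Sum\prm$ is an $\R$-algebra homomorphism,
\[
	\sum_{i} \prod_{j} \Sum(X_{i,j}) = \sum_{i} \prod_{j} \Sum\prm(X_{i,j}) = \Sum\prm\!\parent{\sum_{i} \prod_{j} X_{i,j}} = \Sum\prm(X).
\]
The right-hand side depends only on $X$, not on the chosen representation, so $\M\Sum(X) = \Sum\prm(X)$ is well-defined. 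This single observation is the crux of the whole argument.

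Having identified $\M\Sum$ with the restriction of $\Sum\prm$ to $\M\Domain$, the remaining summation axioms come essentially for free. Axiom \ref{Condition: ExtendsAdd} holds since $\M\Sum(1) = \Sum\prm(1) = 1$; axiom \ref{Condition: IsAlgebraMorphism} (and hence \ref{Condition: IsModuleMorphism}) holds because $\Sum\prm$ is an algebra homomorphism and $\M\Domain$ is a subalgebra on which $\M\Sum$ agrees with $\Sum\prm$; and axiom \ref{Condition: Factors by (1 - sigma)} follows likewise, since $(1-\sigma)\M\Domain \subseteq \M\Domain$ and $\Sum\prm$ factors through $(1-\sigma)$. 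In particular $\M\Sum$ is multiplicative, so $\M$ does send weakly multiplicative summations to multiplicative ones, i.e. $\M : \wMSumCat \to \MSumCat$. That $\M$ is an extension map is immediate: taking $\ell = 1$, $k_1 = 1$ shows $\Domain \subseteq \M\Domain$, and $\M\Sum|_\Domain = \Sum$. To see $\M$ is a functor I would invoke Proposition \ref{Proposition: functor of summations}: if $\Sum \subseteq \widehat\Sum$ then every generator of $\M\Domain$ is a generator of $\M\widehat\Domain$ summed to the same value, so $\M\Sum \subseteq \M\widehat\Sum$.

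For minimality, I would argue that any multiplicative extension $\Sum\prm$ of $\Sum$ must contain $\M\Sum$. Indeed, if $\Sum\prm \in \MSums \R \Codomain$ extends $\Sum$, then its domain $\Domain\prm$ is an $\R$-algebra containing $\Domain$, hence contains every sum of products of elements of $\Domain$, so $\M\Domain \subseteq \Domain\prm$; and by the displayed identity above, $\Sum\prm$ agrees with $\M\Sum$ on $\M\Domain$. Thus $\M\Sum \subseteq \Sum\prm$, proving $\M\Sum$ is the unique minimal multiplicative extension. Idempotence, $\M\M\Sum = \M\Sum$, then follows formally: $\M\Sum$ is already multiplicative, so it is its own minimal multiplicative extension, giving $\M\M\Sum = \M\Sum$; alternatively one checks directly that a sum of products of elements of $\M\Domain$ is again a sum of products of elements of $\Domain$, so $\M\M\Domain = \M\Domain$. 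The main obstacle throughout is the well-definedness of $\M\Sum$, and the key insight that resolves it is that weak multiplicativity furnishes a genuine multiplicative extension $\Sum\prm$ against which all representations of $X$ can be compared; once $\M\Sum$ is recognized as a restriction of such a $\Sum\prm$, every other assertion reduces to the corresponding (already established) property of $\Sum\prm$.
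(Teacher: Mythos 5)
Your proposal is correct and follows essentially the same route as the paper: both arguments hinge on identifying $\M\Sum$ with the restriction of an arbitrary multiplicative extension $\Sum\prm$ to $\M\Domain$, from which well-definedness, multiplicativity, minimality, and idempotence all follow. Your write-up simply spells out the details the paper leaves as ``by observation'' and ``follow immediately.''
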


\begin{proof}
	Let $(\Domain, \Sum) \in \WSums \R \Codomain$, and let $(\Domain\prm, \Sum\prm)$ be any multiplicative extension of $(\Domain, \Sum)$. As $\M \Domain$ is the smallest ring containing $\Domain$ by construction, and $\M \Sum = \Sum\prm |_{\M \Domain}$ by observation, we see that $\M\Sum$ is a well-defined extension map. Furthermore, $\M\Sum$ is a minimal multiplicative summation extending $\Sum$, by the minimality of $\M\Domain$. Finally, since $\M\Sum$ agrees with $\Sum\prm$ on $\M\Domain$ and $\Sum\prm$ was an arbitrary multiplicative extension of $\Sum$, we see that $\M\Sum$ is the unique minimal extension. The remaining claims follow immediately.
\end{proof}

\begin{obsrem} 
	Observe $\M$ is left adjoint to the forgetful functor from $\MSumCat$ to $\wMSumCat$.
\end{obsrem}

The following proposition gives us a concrete, albeit straightforward, criterion for recognizing weakly multiplicative summations.

\begin{prop}\label{Proposition: Equivalent Conditions for Being Weakly Multiplicative}
	Let $(\Domain, \Sum) \in \Sums \R \Codomain$ be arbitrary. The following are equivalent:
	\begin{enumerate}[label=(\roman*)]
		\item The summation $(\Domain, \Sum)$ is weakly multiplicative. \label{Weakly Multiplicative}
		\item If $X_{i,j} \in \Domain$ for $1 \leq i \leq \ell$ and $1 \leq j \leq k_i$ and 
		\[
			\sum_{i = 1}^\ell \prod_{j = 1}^{k_i} X_{i,j} = 0,
		\]
		then
		\[
			\sum_{i = 1}^\ell \prod_{j = 1}^{k_i} \Sum \parent{X_{i,j}} = 0.
		\] \label{Weakly Multiplicative Equivalent Condition}
	\end{enumerate} 
\end{prop}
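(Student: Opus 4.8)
The plan is to prove the two implications of this biconditional separately, using the multiplicative extension $\M\Sum$ constructed in Theorem \ref{Theorem: Multiplicative Extension} as the main tool. The forward direction \ref{Weakly Multiplicative} $\Rightarrow$ \ref{Weakly Multiplicative Equivalent Condition} should be nearly immediate: if $\Sum$ is weakly multiplicative, it has some multiplicative extension $\Sum\prm$. Given any $X_{i,j} \in \Domain \subseteq \Domain\prm$ with $\sum_i \prod_j X_{i,j} = 0$, I would apply $\Sum\prm$ to both sides. Since $\Sum\prm$ is an $\R$-algebra homomorphism agreeing with $\Sum$ on $\Domain$, we get $0 = \Sum\prm(0) = \sum_i \prod_j \Sum\prm(X_{i,j}) = \sum_i \prod_j \Sum(X_{i,j})$, which is exactly the desired conclusion.

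The substance lies in the converse \ref{Weakly Multiplicative Equivalent Condition} $\Rightarrow$ \ref{Weakly Multiplicative}. Here I want to exhibit an explicit multiplicative extension of $\Sum$, and the natural candidate is the pair $(\M\Domain, \M\Sum)$ from Definition \ref{Definition: Multiplicative Extension}: set $\M\Domain$ to be the set of all finite sums of finite products of elements of $\Domain$, and define $\M\Sum$ on such an element by $\M\Sum(X) = \sum_i \prod_j \Sum(X_{i,j})$. First I would observe that $\M\Domain$ is the smallest $\R$-subalgebra of $\R[[\sigma]]$ containing $\Domain$, so it satisfies axiom \ref{Condition: AreSeries} and is closed under multiplication. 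The key step---and the main obstacle---is showing that $\M\Sum$ is \emph{well-defined}, meaning independent of the chosen representation $X = \sum_i \prod_j X_{i,j}$. This is precisely where hypothesis \ref{Weakly Multiplicative Equivalent Condition} enters: if $\sum_i \prod_j X_{i,j} = \sum_{i'} \prod_{j'} X_{i',j'}\prm$ are two representations of the same series, then subtracting gives a single expression of the form $\sum \prod (\cdots) = 0$ (after absorbing signs into one factor of each product using the $\R$-module structure), and condition \ref{Weakly Multiplicative Equivalent Condition} forces the corresponding sum of products of $\Sum$-values to vanish, yielding equality of the two values assigned by $\M\Sum$.

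Once well-definedness is established, the remaining verifications are routine and I would treat them briefly. That $\M\Sum$ is $\R$-linear follows because the defining formula is manifestly additive and $\R$-homogeneous on representations; that it is multiplicative follows because concatenating the product-representations of two elements gives a product-representation of their product, whose $\M\Sum$-value is the product of the two values. Axiom \ref{Condition: ExtendsAdd} holds since $1 \in \Domain$ and $\Sum(1) = 1$, and axiom \ref{Condition: Factors by (1 - sigma)} (shift-invariance) carries over from $\Sum$ because multiplying a representation through by a factor and invoking shift-invariance of $\Sum$ on each factor preserves the defining formula; alternatively one notes $\M\Sum$ restricted to $\Domain$ equals $\Sum$, and the factorization through $\M\Domain/(1-\sigma)$ follows from that of $\Sum$ together with multiplicativity. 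Finally, $\M\Sum$ restricts to $\Sum$ on $\Domain$ (taking $\ell = k_1 = 1$), so $(\M\Domain, \M\Sum)$ is a genuine multiplicative extension of $\Sum$, proving $\Sum$ is weakly multiplicative. I expect the only genuinely delicate point to be the well-definedness argument, since everything else is a formal consequence of the algebra-homomorphism structure.
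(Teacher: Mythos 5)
Your proposal is correct and follows essentially the same route as the paper: the forward direction applies a multiplicative extension to the vanishing sum of products, and the converse constructs $(\M\Domain,\M\Sum)$ from Definition \ref{Definition: Multiplicative Extension} and uses condition \ref{Weakly Multiplicative Equivalent Condition} to establish well-definedness by taking differences of two representations. Your write-up is somewhat more explicit than the paper's (which dispatches the remaining axiom verifications with ``this being the case, $\M\Sum$ is clearly multiplicative''), but the substance is identical.
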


\begin{proof}
	$\ref{Weakly Multiplicative} \implies \ref{Weakly Multiplicative Equivalent Condition}$ Suppose that $(\Domain, \Sum)$ is weakly multiplicative, and let $(\Domain\prm, \Sum\prm)$ be any multiplicative extension of $(\Domain, \Sum)$. Then if $X_{i,j} \in \Domain \subseteq \Domain\prm$ for $1 \leq i \leq \ell$ and $1 \leq j \leq k_i$, we see $X = \sum_{i = 1}^\ell \prod_{j = 1}^{k_i} X_{i,j} \in \Domain\prm$, since $\Domain\prm$ is a ring. Then we compute 
	\[
		0 = \Sum\prm(0) = \Sum\prm(X) = \sum_{i = 1}^\ell \prod_{j = 1}^{k_i} \Sum\prm \parent{X_{i,j}} = \sum_{i = 1}^\ell \prod_{j = 1}^{k_i} \Sum \parent{X_{i,j}},
	\]
	which proves one direction of the result.
	
	$\ref{Weakly Multiplicative Equivalent Condition} \implies \ref{Weakly Multiplicative}$ Suppose \ref{Weakly Multiplicative Equivalent Condition} holds for $(\Domain, \Sum)$, and define $\parent{\M\Sum, M\Domain}$ in accordance with Definition \ref{Definition: Multiplicative Extension} above. I claim that $\M\Sum$ is well-defined. Suppose that 
	\[
		X = \sum_{i = 1}^\ell \prod_{j = 1}^{k_i} X_{i,j} = \sum_{i = 1}^{\ell\prm} \prod_{j = 1}^{k\prm_i} X\prm_{i,j}
	\] 
	for some $X_{i,j}, X\prm_{i,j} \in \Domain$. Taking differences, we may assume without loss of generality that 
	\[
		\sum_{i = 1}^\ell \prod_{j = 1}^{k_i} X_{i,j} = 0.
	\] 
	Then condition \ref{Weakly Multiplicative Equivalent Condition} tells us that $\M\Sum \parent X$ is well-defined; this being the case, $\M\Sum$ is clearly multiplicative.
\end{proof}

For any weakly multiplicative summation $(\Domain, \Sum)$ and any natural number $k \geq 1$, we define 
\[
	\M_k \Domain \coloneqq \Set{X = \sum_{i = 1}^\ell \prod_{j = 1}^{k} X_{i,j} \in \R[[\sigma]] \ : \ \ell \in \bbN \ \text{and} \ X_{i,j} \in \Domain},
\] 
and define $\M_k \Sum$ to be the restriction of $\M \Sum$ to $\M_k \Domain$. This gives us a grading 
\begin{equation*}
\Sum = \M_1\Sum \subseteq \M_2\Sum \subseteq \M_3\Sum \subseteq \cdots \subseteq \M\Sum.
\end{equation*}
Naturally, this grading is trivial if $\Sum$ is multiplicative. The familiar weakly multiplicative summation $\Sum_c$ exhibits this grading nicely. 

\begin{proposition}\label{MConvOrder} If a series $X = \sum_{n=0}^\infty x_n\sigma^n$ is in $\M_{k+1} \Domain_c$, then $x_n = O(n^k)$.
  If $X$ is in $\M\Sum_c$, then $x_n = O(n^k)$ for some $k$.
\end{proposition}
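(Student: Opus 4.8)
The plan is to reduce everything to a single estimate on the coefficients of a formal product of convergent series, proved by induction on the number of factors. The base case is the elementary fact underlying convergence: if $A = \sum_n a_n \sigma^n \in \Domain_c$, then the partial sums converge, so $a_n \to 0$ and in particular $a_n = O(1) = O(n^0)$. This already settles the case $k = 0$, since $\M_1 \Domain_c = \Domain_c$.

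The heart of the argument is the following lemma: if $P = A_1 \cdots A_r$ is the \emph{formal} (Cauchy) product of $r$ series in $\Domain_c$, then its coefficients satisfy $p_n = O(n^{r-1})$. I would prove this by induction on $r$, the case $r = 1$ being the base case above. For the inductive step, write $P = Q \cdot A_r$ with $Q = A_1 \cdots A_{r-1}$; the inductive hypothesis gives $|q_m| \le C(1+m)^{r-2}$, while $A_r \in \Domain_c$ gives a uniform bound $|(a_r)_n| \le M$. The Cauchy product formula $p_n = \sum_{m=0}^n q_m (a_r)_{n-m}$ then yields
\[
	|p_n| \le M \sum_{m=0}^n |q_m| \le M (n+1)\, C (1+n)^{r-2} = O(n^{r-1}),
\]
completing the induction. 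The crucial conceptual point, and the only place warranting any care, is that we never ask the product $P$ to converge: the Cauchy product is a purely formal operation whose coefficients are finite sums, and we merely estimate their magnitude. This is exactly what allows the multiplicative extension to leave $\Domain_c$.

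Part (1) then follows immediately. By definition a series $X \in \M_{k+1}\Domain_c$ is a finite sum $X = \sum_{i=1}^\ell P_i$, where each $P_i$ is a product of exactly $k+1$ factors drawn from $\Domain_c$. The lemma gives $(P_i)_n = O(n^k)$ for each $i$, and a finite sum of $O(n^k)$ sequences is again $O(n^k)$, so $x_n = O(n^k)$.

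For part (2), I would first observe that $\M\Domain_c = \bigcup_{k \ge 1} \M_k \Domain_c$: since the unit series $1$ is finitely supported it lies in $\Domain_c$, so any product of fewer than $K$ factors can be padded to exactly $K$ factors by inserting copies of $1$. Hence every element of $\M\Domain_c$ lies in some $\M_K\Domain_c$, and part (1) with $k = K-1$ gives $x_n = O(n^{K-1})$. I do not expect a genuine obstacle here; the entire difficulty is bookkeeping, namely running the induction on the number of factors cleanly and remembering that convergence of the product is neither available nor required.
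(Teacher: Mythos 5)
Your proposal is correct and follows essentially the same route as the paper: both arguments rest on the observation that a convergent series has $O(1)$ coefficients and that the $n$th Cauchy coefficient of a $(k+1)$-fold product is a sum of polynomially many such bounded terms (the paper counts the $\binom{n+k}{k}$ summands directly, while you obtain the same bound by induction on the number of factors). Your extra care with the padding-by-$1$ argument for $\M\Domain_c = \bigcup_k \M_k\Domain_c$ is a detail the paper leaves implicit, but it is not a different method.
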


\begin{proof} 
	If $X$ converges, $x_n = O(1)$. As the $n$th term of an $(k+1)$-fold product is the sum of $\binom{k+n}{k}$ products of individual terms, the coefficients of the product of $k+1$ convergent series must be $O(n^k)$. The second claim follows immediately.
\end{proof} 

Mertens' Theorem \cite[page 321]{Knopp} (see also \cite[page 228]{Hardy}) says that the product of an absolutely convergent series and a convergent series is always convergent, and a result of Schur \cite{Schur} says that this property characterizes convergent series. Thus it makes sense to take $\M_0 \Sum_c \coloneqq \Sum_a$. For a weakly multiplicative summation $(\Domain, \Sum)$, we may define 
\[
\M_0 \Domain \coloneqq \Set{A \in \Domain \ : \ \text{if } X \in \Domain \ \text{then} \ A X \in \Domain},
\]
and $\M_0 \Sum \coloneqq \Sum |_{\M_0 \Domain}$. The set $\M_0 \Domain$ is easily seen to be shift-invariant and an algebra, so $\M_0 \Sum$ is a multiplicative summation.  

There is an intriguing similarity between this and the definition of finite-order Ces\`{a}ro means (see \cite[pages 96-98]{Hardy}). The product of an order-$k$ Ces\`{a}ro-H\"{o}lder summable series and an order-$\ell$ Ces\`{a}ro-H\"{o}lder summable series is Ces\`{a}ro-H\"{o}lder summable of order $k+\ell+1$, so the (domains of the) Ces\`{a}ro-H\"{o}lder means may be considered as an algebra graded by order plus 1.  The 0th order Ces\`{a}ro-H\"{o}lder mean is traditionally defined to be $\Sum_c$ itself.  Moreover, Hardy defines the Ces\`{a}ro mean of order -1 to be the restriction of $\Sum_c$ to series with terms $o(n^{-1})$, making the $k=-1$ case Mertens-like. It follows that every product of $k$ convergent series is Ces\`{a}ro-H\"{o}lder summable of order $k-1$.

\begin{question}
Is every Ces\`{a}ro-H\"{o}lder summable series a product of convergent series?
\end{question}

This fits naturally into the classical theory of divergent series, so we would have expected to find an answer in the literature, but we failed to find it. Again, the following result should be known.

\begin{proposition}\label{MConvGrade}
	The grading of $\M\Sum_c$ does not stabilize: for arbitrarily large $k$, there are series which are products of $k$ convergent series but no fewer.
\end{proposition}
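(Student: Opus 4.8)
The plan is to use Proposition~\ref{MConvOrder} as a lower-bound obstruction. Since $\M_{k-1}\Domain_c = \M_{(k-2)+1}\Domain_c$, that proposition forces every series in $\M_{k-1}\Domain_c$ to have coefficients $O(n^{k-2})$. Consequently, to witness nonstabilization at stage $k$ it is enough to exhibit a single $k$-fold product of convergent series whose coefficients grow strictly faster than $n^{k-2}$: such a series lies in $\M_k\Domain_c$, but since $\M_j\Domain_c \subseteq \M_{k-1}\Domain_c$ for every $j \le k-1$ (pad a shorter product with factors of $1 \in \Domain_c$), it cannot be written as a sum of products of $k-1$ or fewer convergent series.

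For each integer $k \ge 2$ I would fix a real exponent $\alpha$ with $0 < \alpha < 1/k$ and take the single factor
\[
C \coloneqq \sum_{n=1}^\infty \frac{(-1)^{n-1}}{n^\alpha}\sigma^n.
\]
The alternating series test shows that $\sum_n (-1)^{n-1} n^{-\alpha}$ converges (as $n^{-\alpha}$ decreases to $0$), so $C \in \Domain_c$, whence $X \coloneqq C^k \in \M_k\Domain_c$ by definition. The crucial point is that, because the signs in $C$ are purely alternating, they factor out of every term of the Cauchy product:
\[
x_n = [\sigma^n] C^k = (-1)^{n-k}\sum_{\substack{i_1+\cdots+i_k=n\\ i_1,\dots,i_k\ge 1}}\prod_{j=1}^{k} i_j^{-\alpha}.
\]
Thus $\abs{x_n}$ is a sum of \emph{positive} terms, and no cancellation can occur.

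It then remains to bound this positive sum from below. Restricting the sum to the ``central'' indices with each $i_j$ comparable to $n/k$ leaves $\gtrsim n^{k-1}$ lattice points, on each of which $\prod_j i_j^{-\alpha} \gtrsim n^{-k\alpha}$; hence $\abs{x_n} \gtrsim n^{k-1-k\alpha}$. (Equivalently, the substitution $i_j = n t_j$ exhibits the sum, after scaling by $n^{k-1-k\alpha}$, as a Riemann sum for a convergent Dirichlet integral over the standard simplex, which pins down the exact order $\abs{x_n} \asymp n^{k-1-k\alpha}$.) Because $\alpha < 1/k$ we have $k-1-k\alpha > k-2$, so $x_n \neq O(n^{k-2})$. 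By Proposition~\ref{MConvOrder} this rules out $X \in \M_{k-1}\Domain_c$, and since $k$ was arbitrary, the grading of $\M\Sum_c$ never stabilizes.

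The main obstacle is the lower bound $\abs{x_n} \gtrsim n^{k-1-k\alpha}$, and the device that makes it routine is the observation that the alternating signs factor out, so that $\abs{x_n}$ is an honest positive multivariate sum rather than a potentially cancelling one. Everything else — convergence of $C$, membership of $X$ in $\M_k\Domain_c$, and the translation back through Proposition~\ref{MConvOrder} — is immediate.
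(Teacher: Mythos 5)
Your proof is correct, and it rests on exactly the same obstruction as the paper's: Proposition~\ref{MConvOrder}, which forces every element of $\M_{k-1}\Domain_c$ to have coefficients $O(n^{k-2})$. Where you differ is in the witness. The paper raises the single series $1/\sqrt{1+\sigma}$ (conditionally convergent, coefficients $\asymp n^{-1/2}$) to the power $2k+2$ and reads off the coefficients $(-1)^n\binom{k+n}{k}$ in closed form from the binomial theorem, so no analytic lower bound on a convolution is needed; the price is that this only produces, for each $k$, a product of $2k+2$ convergent series that is not a sum of products of $k$ of them, which suffices for ``arbitrarily large $k$'' but does not realize every grade exactly. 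Your witness $C^k$ with $C=\sum_n(-1)^{n-1}n^{-\alpha}\sigma^n$ and $0<\alpha<1/k$ requires the extra step of bounding $\abs{x_n}$ from below, but your key observation --- that the alternating signs factor out of the $k$-fold Cauchy product, leaving a genuinely positive sum of $\asymp n^{k-1}$ terms each at least $n^{-k\alpha}$ --- makes that step routine, and in exchange you get the sharper conclusion that for \emph{every} $k\ge 2$ there is a product of exactly $k$ convergent series and no fewer (since padding with copies of $1\in\Domain_c$ embeds $\M_j\Domain_c$ into $\M_{k-1}\Domain_c$ for $j\le k-1$). Both arguments are sound; yours is marginally stronger, the paper's marginally more explicit.
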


\begin{proof} 
	As shown above, the terms of any series in $\M_{k+1}\Sum_c$ are $O(n^k)$. But for any $k$, the summation $\M\Sum_c$ contains series that are not $O(n^k)$. 
	
	Set
	\begin{equation*}
		X \coloneqq \frac{1}{\sqrt{1 + \sigma}} = \sum_{n=0}^{\infty} \parent{\frac{-1}{4}}^n \binom{2n}{n}\sigma^n =  1-\frac{1}{2}+\frac{3}{8}-\frac{5}{16}+ \frac{35}{128}- \frac{63}{256}+ \frac{231}{1024}-\cdots
	\end{equation*}
	It is known \cite[Problem 9.60]{Graham-Oren} that 
	\begin{equation*}
	4^{-n}\binom{2n}{n} = \frac{1}{\sqrt{\pi n}} \left(1-\frac{c_n}{n}\right), \mbox{ for } \frac{1}{9} < c_n <\frac{1}{8};
	\end{equation*} 
	thus the $n$th coefficient of $X$ is $O(n^{-1/2})$ and $X$ is conditionally convergent. By Abel's theorem, it converges to $(1+1)^{-1/2} = \frac{1}{\sqrt{2}}$. 

But $X^2$ is the Taylor series of $(1+x)^{-1}$ evaluated at $x=1$, that is
\begin{equation*}
X^2 = \sum_{n=0}^\infty (-1)^n \sigma^n = 1-1+1-1+\cdots
\end{equation*}
which is not convergent. Thus $X^2\in \M_2 \Domain_c$ but $X^2 \not\in \Domain_c$, and $\M\Sum_c(X^2) = 1/2$, the same value given by $\Sum_A$ and $\T\Add$.  Moreover, the $n$th term of $(X^2)^{k+1}$ is $(-1)^n$ times the number of ordered $(k+1)$-tuples $(n_0,n_1,\dots,n_k)$ such that $n_j \geq 0$ for each $j$, and $\sum_{j=0}^k n_j = n$. By a well-known counting trick, or invoking the Binomial Theorem, we get 
$$
X^{2k+2} = \sum_{n=0}^\infty (-1)^n \binom{k+n}{k} \sigma^n
$$
which has terms asymptotically of order $n^{k-1}$, so $X^{2k+2}$ cannot be a product of fewer than $k$ convergent series. Finally, if we had $\M_k \Sum = \M_{k+1}\Sum$, it would follow that all higher grades were also equal; so we have
\begin{equation*}
	\Sum_c =\M_1\Sum_c \subset \M_2\Sum_c \subset\cdots \subset \M_k\Sum_c \subset\cdots \subset \M\Sum.
\end{equation*}
\end{proof}

However, we can construct a weakly multiplicative summation which stabilizes at any desired grade.

\begin{example}
	Let 
	\[
		V \coloneqq \sum_{n = 0}^\infty (-1)^n {{\frac{1}{k+1}} \choose n} \sigma^n \in\bbC[[\sigma]],
	\]
	so that $V^{k+1} = 1 - \sigma$. For $0 \leq j \leq k$ let
	\[
		\Domain_j \coloneqq \R[V,\sigma] = \Domain \oplus \Domain \cdot V \oplus \Domain \cdot V^2 \oplus \dots \oplus V^j,
	\]
	with 
	\[
	\Sum_j : F_0+F_1V + F_2V^2+ \cdots +F_jV^j \mapsto \Add(F_0).
	\]
	Then $\M_j \Domain_1 = \Domain_{j+1} \supset \Domain_j$ and $\M_j \Sum_1 = \Sum_{j+1} \supset \Sum_j$ for $j < k$, but 
\[
\M_{k-1}\Domain_1 = \M_{k}\Domain_1 = \Domain_{k} \ \text{and} \ \M_{k-1}\Sum_1 = \M_{k}\Sum_1 = \Sum_{k}.
\]
\end{example}

We now consider how $\M$ interacts with $\T$. While artificial examples could be constructed, the classical summation $\Sum_c$ provides all the examples we require.

\begin{example}\label{TelNotMult} 
	The series 
	\[
	G_2 \coloneqq \frac{1}{1 - 2 \sigma} = \sum_{n=0}^\infty 2^n \sigma^n = 1 + 2 + 4 + 8 + 16 + 32 + \dots
	\]
	is in $\T\Sum_c$ but (as its terms grow exponentially) not in $\M\Sum_c$. 
\end{example}

Although $\T \Convsum$ is not multiplicatively closed, the following proposition shows that $\T \Convsum$ is at least ``multiplicatively closed relative to $\Sum_a$''.

\begin{proposition} 
	Let $B,C \in \Convdomain$ with $C \in \T\Absdomain$. Then $BC \in \T\Convdomain$.
\end{proposition}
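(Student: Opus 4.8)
The plan is to reuse the polynomial witnessing $C \in \T\Absdomain$ as the telescoping polynomial for $BC$, and then to invoke Mertens' Theorem to force the relevant product to converge. Throughout, recall that $\R = \Codomain = \bbC$, so $\Reg \Codomain = \bbC \setminus \Set{0}$ and the regularity requirement in Definition \ref{Definition: Telescopic Extension} is merely nonvanishing.

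First I would unpack the hypothesis $C \in \T\Absdomain$. By Definition \ref{Definition: Telescopic Extension}, there is a polynomial $G \in \bbC[\sigma]$ and a nonzero scalar $g = \Add(G) = G(1)$ with $GC \in \Absdomain$; that is, $GC$ is absolutely convergent, with $\Abssum(GC) = g \cdot \T\Abssum(C)$. I would then propose $F \coloneqq G$ as the telescoping polynomial for $X \coloneqq BC$, and set $A \coloneqq G \cdot BC$. Since $\bbC[[\sigma]]$ is a commutative ring, $A = (GC) \cdot B$ as formal power series, so $A$ is exactly the Cauchy product of the absolutely convergent series $GC$ with the convergent series $B$.

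The crux is the convergence of $A$. Because $GC$ converges absolutely and $B$ converges, Mertens' Theorem \cite[page 321]{Knopp} guarantees that their Cauchy product converges, with $\Convsum(A) = \Abssum(GC) \cdot \Convsum(B)$. Hence $A \in \Convdomain$, $\Convsum(G) = g \in \Reg \Codomain$, and $\Convsum(A) = g \cdot x$ where $x \coloneqq \T\Abssum(C) \cdot \Convsum(B)$. These are precisely the conditions of Definition \ref{Definition: Telescopic Extension}, so $BC \in \T\Convdomain$, and as a byproduct $\T\Convsum(BC) = \T\Abssum(C) \cdot \Convsum(B)$.

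The single real obstacle is this convergence step, and it is exactly where the asymmetry between $B$ and $C$ is used: the Cauchy product of two merely convergent series can diverge, but the hypothesis $C \in \T\Absdomain$ lets us telescope $C$ into the absolutely convergent series $GC$, at which point Mertens' Theorem applies. I would also remark that the argument never invokes $C \in \Convdomain$: the working hypotheses are just $B \in \Convdomain$ and $C \in \T\Absdomain$.
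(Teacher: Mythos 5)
Your proof is correct and follows exactly the paper's argument: take the polynomial $G$ witnessing $C \in \T\Absdomain$, observe that $G\cdot BC = (GC)\cdot B$ is the product of an absolutely convergent series with a convergent one, and invoke Mertens' Theorem. The paper states this more tersely, but the approach and the key step (Mertens) are identical.
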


\begin{proof} 
	By hypothesis there exists $F$ with $CF=A$, $\Add(F)\neq 0$, and $A\in\Absdomain$. Then $BCF=BA$ and by Mertens' Theorem \cite[page 321]{Knopp} this is convergent.
\end{proof}

Unfortunately, not all convergent series are in the domain of $\T\Sum_a$.

\begin{example}\label{ConvNotTel} 
	The series $C \coloneqq \sum_{n=1}^\infty \frac{(-1)^n}{n} \sigma^{n^2}$ is convergent but not telescopable over $\Sum_a$. Indeed, let $F = \sum_{n=0}^N f_n \sigma^n$ be a polynomial; assume without loss of generality that $f_0$ is nonzero and $f_N$ is the highest-order nonzero coefficient. Then for $n>N/2$ the $n^2$th coefficient of $FC$ has absolute value $\frac{|f_0|}{n}$ and the sum $\sum_{n=1}^\infty \frac{\abs{f_0}}{n}$ diverges.
\end{example} 

The same idea, modified to give a tidier square, shows that $\M\Sum_c \not\subset \T\Sum_c$.

\begin{example}\label{MultNotTel} 
Let
\[
L \coloneqq \bigcup_{n=0}^\infty\left \{m \cdot 2^{2^n} \ : \ 0\leq m \leq 2^{2^n}) \right\} = \{0,2,4,8,12,16,32,48,\dots \},
\]
and let $\ell_n$ be the $n$th element of $L$. We define
\[
	X \coloneqq \sum_{n=1}^\infty \frac{(-1)^n}{\log(n+2)}\sigma^{\ell_n}.
\]
For legibility, let $g(n) \coloneqq 2^{2^n}$. The $g(n)$th coefficient of $X^2 = \sum_{n = 0}^\infty x_n \sigma^n$ is given by

\begin{equation*}
x_{g(n)} = \sum_{\ell_i+\ell_j=g(n)} \frac{1}{\log(i+2)\log(j+2)}.
\end{equation*}
The pairs of indices in the sum are 
\[
\ell_i = kg(n-1), \ \ell_j = g(n)-k g(n-1)) \ \text{for} \ 0\leq k \leq g(n-1);
\]
and there are $g(n-1)$ of these terms. If $\ell_i \leq g(n)$, an easy induction shows that 
\[
g(n-1)< i <2g(n-1)
\]
and so
\[
	\log(i+2) < \log(2^{2^{n-1}+1}+2) < 2^{n-1}.
\]

Thus,
\[
x_{g(n)} > \frac{g(n-1)}{2^{2n-2}} = 2^{2^{n-1}-2n+2} \geq 2 \mbox{ for all } n.
\]

Moreover, there are no pairs $(\ell_i, \ell_j) \in L \times L$ with $g(n)-g(n-2) < \ell_i + \ell_j < g(n)$. Suppose by way of contradiction that $X^2$ telescopes over $\Sum_c$, say with $F X^2 \in \Convdomain$. Without loss of generality, we may take $F \in \bbC[\sigma]$ with $F(0) \neq 0$: now if we let $n$ be large enough that $g(n-2) > \deg(F)$,  then the $g(n)$th term of $F X^2$ is $F(0) x_{g(n)}$ and $F X^2$ does not converge, a contradiction.
\end{example}

It follows that neither of $\T\Sum_c$ and $\M\Sum_c$ contains the other. However, we have the following general result.

\begin{theorem}\label{Theorem: Telescopic-multiplicative extension}
	The functor of summations
	\[
		\T\M : \wMSumCat \to \MSumCat
	\]
	is an idempotent extension map which subsumes $\M$, $\T$, and $\M \T$.
\end{theorem}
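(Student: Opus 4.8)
The plan is to treat $\T\M$ purely formally as the composite $\T \circ \M$ and to reduce the entire statement to two commutation identities between the functors $\T$ and $\M$. First I would record the structural inputs already in hand: by Theorem~\ref{Theorem: Multiplicative Extension}, $\M : \wMSumCat \to \MSumCat$ is an idempotent extension map sending each summation to its minimal multiplicative extension, and by Proposition~\ref{Proposition: Telescopic extensions preserve multiplication} together with Theorem~\ref{Theorem: Telescopic Extension}, $\T : \MSumCat \to \MSumCat$ is an idempotent extension map. Since a composite of functors is a functor and a composite of extension maps is an extension map, $\T\M$ is immediately a functor $\wMSumCat \to \MSumCat$ with $\Sum \subseteq \M\Sum \subseteq \T\M\Sum$ for every weakly multiplicative $\Sum$. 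This disposes of the extension-map claim and the target category at once; everything else will follow by algebra from the identities $\M\M = \M$, $\T\T = \T$, the fact that $\M$ fixes multiplicative summations, and the fact that $\T\M\Sum$ is always multiplicative.

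From these inputs I would first derive the easy identity $\M\T\M = \T\M$: the summation $\T\M\Sum$ is multiplicative because $\T$ preserves multiplicativity, and $\M$ fixes a multiplicative summation, so $\M\parent{\T\M\Sum} = \T\M\Sum$.

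The crux of the proof, and the step I expect to be the main obstacle, is the second identity $\T\M\T = \T\M$. I would argue as follows. Because $\Sum \subseteq \M\Sum$ and $\T$ preserves extensions, $\T\Sum \subseteq \T\M\Sum$; moreover $\T\M\Sum$ is a \emph{multiplicative} summation extending $\T\Sum$. Since $\T\Sum$ is weakly multiplicative, $\M\T\Sum$ is its minimal multiplicative extension, so minimality forces $\M\T\Sum \subseteq \T\M\Sum$. Applying $\T$ (which preserves extensions) and using that $\T\M\Sum$ is telescopically closed, i.e.\ $\T\T\M\Sum = \T\M\Sum$, I obtain $\T\M\T\Sum \subseteq \T\M\Sum$. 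The reverse inclusion $\T\M\Sum \subseteq \T\M\T\Sum$ is immediate from $\Sum \subseteq \T\Sum$ together with the fact that $\T\M$ preserves extensions. Hence $\T\M\T = \T\M$.

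With the two identities $\M\T\M = \T\M$ and $\T\M\T = \T\M$ in hand, the remaining assertions are formal manipulations. Idempotence reads $\T\M\T\M = \T\parent{\M\T\M} = \T\T\M = \T\M$. For subsumption of $\M$: $\T\M\circ\M = \T\M\M = \T\M$ and $\M\circ\T\M = \M\T\M = \T\M$. For subsumption of $\T$: $\T\circ\T\M = \T\T\M = \T\M$ and $\T\M\circ\T = \T\M\T = \T\M$. For subsumption of $\M\T$: $\T\M\circ\M\T = \T\M\M\T = \T\M\T = \T\M$ and $\M\T\circ\T\M = \M\T\T\M = \M\T\M = \T\M$. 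Each line uses only $\M\M = \M$, $\T\T = \T$, and the two derived identities, so once the key relation $\T\M\T = \T\M$ is secured the theorem follows.
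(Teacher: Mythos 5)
Your proof is correct, but your route to the key identity $\T\M\T = \T\M$ is genuinely different from the paper's. The paper establishes $\T\M\T\Domain = \T\M\Domain$ by an explicit element-wise construction: given $X \in \T\M\T\Domain$, it unpacks the telescoping witness $F$, the sum-of-products decomposition $A = \sum_i \prod_j X_{i,j}$ with each $X_{i,j} \in \T\Domain$, and the inner telescoping witnesses $F_{i,j}, A_{i,j}$, and then assembles a single polynomial $F\prm = F\prod_{i,j}F_{i,j}$ and a series $A\prm = \sum_i \prod_j A_{i,j}\prod_{(i\prm,j\prm)\neq(i,j)}F_{i\prm,j\prm} \in \M\Domain$ witnessing $X \in \T\M\Domain$. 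You instead argue abstractly: $\T\M\Sum$ is a multiplicative extension of $\T\Sum$ (using that $\T$ preserves extensions and multiplicativity), so the minimality clause of Theorem \ref{Theorem: Multiplicative Extension} forces $\M\T\Sum \subseteq \T\M\Sum$, and applying $\T$ together with $\T\T = \T$ gives $\T\M\T\Sum \subseteq \T\M\Sum$; the reverse inclusion is functoriality applied to $\Sum \subseteq \T\Sum$. This is a legitimate shortcut --- every ingredient ($\M$ as \emph{least} multiplicative extension, idempotence of $\T$ and $\M$, preservation of multiplicativity by $\T$) is established earlier in the paper, so there is no circularity --- and it has the pleasant side effect of producing $\M\T\Sum \subseteq \T\M\Sum$ as an intermediate step rather than as a corollary, as the paper presents it after the theorem. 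What the paper's computation buys in exchange is concreteness: it exhibits explicit telescoping data for elements of $\T\M\T\Domain$, which is the sort of information the abstract minimality argument deliberately discards. The remaining formal manipulations (idempotence and the six subsumption identities) are handled essentially identically in both arguments.
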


\begin{proof}
	We first prove that $\T \M \T \Domain = \T \M \Domain$ for every weakly multiplicative summation $(\Domain, \Sum)$. As $\T$ is an extension map, it is clear that $\T \M \T \Domain \supseteq \T \M \Domain$. Suppose now that $X \in \T \M \T \Domain$. By Definition \ref{Definition: Telescopic Extension}, we may choose
	\[
	F \in \R[\sigma], \ A \in \M \T \Domain, \ f \in \Reg \Codomain, \text{and} \ x \in \Codomain
	\]
	so that 
	\[
	F X = A, \ \Sum(F) = f, \text{and} \ \M\T\Sum(A) = f x.
	\]
	But if $A \in \M \T \Domain$, then by Definition \ref{Definition: Multiplicative Extension}, we may write $A = \sum_{i=1}^\ell \prod_{j=1}^{k_i} X_{i, j} \in \M \T\Domain$ with each $X_{i, j} \in \T\Domain$. Again by Definition \ref{Definition: Telescopic Extension}, for each $i$ and $j$ we may choose 
	\[
	F_{i, j} \in \R[\sigma], \ A_{i, j} \in \Domain, \ f_{i, j} \in \Reg \Codomain, \ \text{and} \ x_{i, j} \in \Codomain
	\]
	so that 
	\[
	F_{i, j} X_{i, j} = A_{i, j}, \ \Sum(F_{i, j}) = f_{i, j}, \ \text{and} \ \Sum(A_{i, j}) = f_{i, j} x_{i, j}. 
	\]
	Now let 
	\begin{align*}
	F\prm &\coloneqq F \cdot \prod_{i=1}^\ell \prod_{j=1}^{k_i} F_{i, j} \ \text{and} \\
	A\prm &\coloneqq \sum_{i=1}^\ell \prod_{j=1}^{k_i} A_{i, j} \prod_{(i\prm, j\prm) \neq (i, j)} F_{i\prm, j\prm}.
	\end{align*}
	As a product of polynomials, $F\prm$ is a polynomial, and as $\Sum(F)$ and each $\Sum(F_{i, j})$ is regular, $\Sum(F\prm)$ is regular. By construction, $A\prm \in \M\Domain$, $F\prm X = A\prm$, and $\M\Sum(A\prm) = f\prm x,$ where $f\prm \coloneqq \Sum(F)$ and $x = \sum_{i=1}^\ell \prod_{j=1}^{k_i} x_{i, j}$ is as above. Thus $\T \M \T \Domain = \T \M \Domain$ as desired.
	
	We have shown $\T \M \T = \T \M$. On the other hand, $\T \T \M = \T \M$ because $\T$ is idempotent, so $\T \M$ subsumes $\T$. We now prove $\T \M$ subsumes $\M$. If $\Sum$ is weakly multiplicative then by Proposition \ref{Proposition: Telescopic extensions preserve multiplication}, $\T \M \Sum$ is multiplicative, and so $\M \T \M \Sum = \T \M \Sum$, since by Theorem \ref{Theorem: Multiplicative Extension}, $\M$ leaves multiplicative summations unchanged. On the other hand, $\T \M \M = \T \M$ because $\M$ is idempotent, so $\T \M$ subsumes $\M$. But as $\T \M$ subsumes $\M$ and $\T$, it is immediate that $\T \M$ subsumes $\M \T$ and itself, and our claim follows.
\end{proof}

Theorem \ref{Theorem: Telescopic-multiplicative extension} implies that $\M \T \Sum \subseteq \T \M \Sum$. If the codomain of $\Sum$ is a field, the reverse inclusion also holds.

\begin{prop}\label{Proposition: MTS = TMS if E is a field}
	Let $(\Domain, \Sum) \in \WSums \R \Codomain$ be a weakly multiplicative summation. If $\Codomain$ is a field them $\M \T \Sum = \T \M \Sum$.
\end{prop}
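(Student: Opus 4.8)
The plan is to prove the missing inclusion $\T\M\Sum \subseteq \M\T\Sum$, since Theorem \ref{Theorem: Telescopic-multiplicative extension} already furnishes $\M\T\Sum \subseteq \T\M\Sum$. First I would take a series $X \in \T\M\Domain$ and, by Definition \ref{Definition: Telescopic Extension}, fix $F \in \R[\sigma]$ and $A \in \M\Domain$ with $FX = A$, $\Sum(F) = f \in \Reg{\Codomain}$, and $\M\Sum(A) = fx$. Because $\Codomain$ is a field, $\Reg{\Codomain} = \Codomain \setminus \{0\}$, so the only constraint is $f \neq 0$, and $f$ is automatically invertible in $\Codomain$. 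Using Definition \ref{Definition: Multiplicative Extension}, I would also fix a presentation $A = \sum_{i=1}^\ell \prod_{j=1}^{k_i} A_{i,j}$ with each $A_{i,j} \in \Domain$. The goal is then to exhibit $X$ as a finite sum of products of elements of $\T\Domain$, arranging the bookkeeping so that $\M\T\Sum(X) = x = \T\M\Sum(X)$.

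The central idea is to invert $F$ inside $\T\Domain$. If the constant term $F(0)$ is a unit in $\R$, then $1/F$ is a genuine element of $\R[[\sigma]]$, and since $F \cdot (1/F) = 1 \in \R[\sigma] \subseteq \Domain$ with $\Sum(F) = f \neq 0$, Definition \ref{Definition: Telescopic Extension} places $1/F \in \T\Domain$ with $\T\Sum(1/F) = f^{-1}$. Then
\[
X = A \cdot \frac{1}{F} = \sum_{i=1}^\ell \Bigl(\frac{1}{F} \cdot \prod_{j=1}^{k_i} A_{i,j}\Bigr)
\]
is a finite sum of products of elements of $\T\Domain$ (each $A_{i,j} \in \Domain \subseteq \T\Domain$ and $1/F \in \T\Domain$), whence $X \in \M\T\Domain$. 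Since $\M\T\Sum$ is multiplicative by Proposition \ref{Proposition: Telescopic extensions preserve multiplication}, I can then compute $\M\T\Sum(X) = \M\Sum(A)\,\T\Sum(1/F) = fx \cdot f^{-1} = x$, matching $\T\M\Sum(X)$. The case $F(0) = 0$ reduces to the previous one: writing $F = \sigma^a G$ with $G(0) \neq 0$, the relation $FX = A$ forces $\sigma^a \mid A$, and stripping the factor $\sigma^a$ (using that $\M\Domain$ and $\M\Sum$ are shift-invariant) replaces the datum by $GX = A'$ with $A' \in \M\Domain$, $\Sum(G) = f$, and $G(0) \neq 0$.

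The hard part—and the precise place the field hypothesis must do its work—is guaranteeing that the telescoping polynomial can be chosen so that its reciprocal is again a power series \emph{over $\R$}, i.e.\ that $F(0)$ can be taken to be a unit. Over a field the decisive input is that $f = \Sum(F)$ is invertible with no zero-divisor obstructions, so that distributing $1/F$ across the product decomposition of $A$ is unproblematic; the shift-stripping step above then isolates the constant-term issue. I expect the delicate point to be controlling this reciprocal when $\R$ itself is not a field, where one must rule out the denominator escaping to a larger coefficient ring; I would address this by factoring $F$ and matching its factors against the product structure of $A$, invoking the field hypothesis on $\Codomain$ to keep every factor's value regular throughout. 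Once membership $X \in \M\T\Domain$ is secured, equality of the two summations is immediate from the value computation together with the inclusion already supplied by Theorem \ref{Theorem: Telescopic-multiplicative extension}, giving $\M\T\Sum = \T\M\Sum$.
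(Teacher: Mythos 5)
Your overall route is the same as the paper's: the content is the inclusion $\T\M\Sum \subseteq \M\T\Sum$, obtained by taking $FX = A \in \M\Domain$, expanding $A = \sum_i \prod_j A_{i,j}$, stripping the power of $\sigma$ from $F$, and distributing the inverted denominator across the product decomposition; your value computation $\M\T\Sum(X) = f^{-1}\cdot fx = x$ and your $\sigma$-stripping reduction both match the paper. But your argument is only complete in the case you isolate, namely when the constant term of the stripped polynomial $F\prm$ is a \emph{unit} of $\R$, so that $1/F\prm \in \R[[\sigma]]$ and hence $1/F\prm \in \T\Domain$. After writing $F = \sigma^a F\prm$ with $F\prm(0) \neq 0$, you are still left with the case where $F\prm(0)$ is a nonzero non-unit of $\R$, and there your proposal stops at a plan (``factoring $F$ and matching its factors against the product structure of $A$'') that is never executed and is not obviously executable: over an arbitrary commutative ring $F\prm$ need not factor, and its irreducible factors bear no relation to the $A_{i,j}$. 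Since this residual case is exactly what separates the stated hypothesis ``$\Codomain$ is a field'' from the much stronger ``$\R$ is a field,'' the proof as written has a genuine gap.

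The paper closes that case by different bookkeeping: rather than asking $1/F\prm$ itself to be a series over $\R$, it absorbs the denominator into one chosen factor of each product, writing $\sigma^m X = \sum_{i}\bigl(A_{i,1}/F\prm\bigr)\prod_{j\geq 2}A_{i,j}$ and placing each $A_{i,1}/F\prm$ in $\T\Domain$ via the witness $F\prm\cdot\bigl(A_{i,1}/F\prm\bigr) = A_{i,1} \in \Domain$ with $\Sum(F\prm) = f$ regular; here $F\prm$ is inverted as a unit of $\Codomain[[\sigma]]$ because $F\prm(0)\neq 0$, the field hypothesis supplies $y$ with $\Sum(A_{i,1}) = fy$, and shift-invariance returns from $\sigma^m X$ to $X$. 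To be fair, the point you flagged is genuinely delicate --- the paper's own step is terse about why $A_{i,1}/F\prm$ has coefficients in $\R$ rather than merely in $\Codomain$ --- but identifying the difficulty is not the same as resolving it, and your proposal resolves it neither your way nor the paper's.
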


\begin{proof}
	Let $(\Domain, \Sum) \in \WSums \R \Codomain$ be a weakly multiplicative summation with $\Codomain$ a field. Theorem \ref{Theorem: Telescopic-multiplicative extension} tells us that $\M \T \Domain \subseteq \T \M \Domain$, so it suffices to prove the reverse inclusion.
	
	Let $X \in \T \M \Domain$. By Definition \ref{Definition: Telescopic Extension}, we may choose
	\[
	F \in \R[\sigma], \ A \in \M \Domain, \ f \in \Reg \Codomain, \text{and} \ x \in \Codomain
	\]
	so that 
	\[
	F X = A, \ \Sum(F) = f, \text{and} \ \M\Sum(A) = f x.
	\]
	But if $A \in \M \Domain$, then by Definition \ref{Definition: Multiplicative Extension}, we may write $A = \sum_{i=1}^\ell \prod_{j=1}^{k_i} A_{i, j} \in \M \Domain$ with each $A_{i, j} \in \Domain$. Now write $F = \sigma^m F\prm$ with $F\prm(0) \neq 0$, so that $F\prm(\sigma)$ is a unit in $\Codomain[[\sigma]]$. Then
	\[
	\sigma^m X = \sum_{i=1}^\ell \frac{A_{i, 1}}{F} \prod_{j=2}^{k_i} A_{i, j} \in \M\T\Domain.
	\]
	But as $\sigma^m X \in \M\T\Domain$ and $\M\T\Sum$ is a summation, we see $X \in \M \T\Domain$ as desired.
\end{proof}

Proposition \ref{Proposition: MTS = TMS if E is a field} notwithstanding, we pose the following question.

\begin{quest}
	Does $\M \T = \T \M$?
\end{quest}

We conjecture that the answer is negative.

\begin{example}\label{Example: MT != TM}
	Let $\R \coloneqq \bbC[z]$, let $\Codomain \coloneqq \bbC[x, y]$, and make $\Codomain$ an $\R$-algebra via $z \mapsto xy$. Let $X, Y \in \R[[\sigma]]$ be algebraically independent over $\R[\sigma]$. Let $(\Domain, \Sum)$ be the minimal extension of $\Add$ such that $\Sum : X \to x$ and $\Sum : Y \to y$. Thus
	\[
	\Domain = \R[\sigma] \oplus \R[\sigma] \cdot X \oplus \R[\sigma] \cdot Y.
	\]
	Now let $Z \coloneqq \frac{XY}{1 - \sigma + z \sigma^2}$. We readily verify that $Z \in \T \M \Domain$, and compute $\T \M \Sum (Z) = 1$. However, we suspect that $Z \not\in \M\T\Domain$, and thus $\M \T \neq \T \M$. If so, this also implies that $\M \T$ is not idempotent and does not subsume $\M$ or $\T$.

\end{example}

\section{Rational Extensions}\label{Section: Rational Extensions}

Many summations (most notably $\Convsum$) can be considered as the composition $\Lambda\Sigma$ of two linear functions (see \cite{Boos}), where $\Sigma$ takes a series to its sequence of partial sums (in $\Codomain$), and $\Lambda : \Codomain[[\sigma]] \to \Codomain$ is some limit-like operator assigning values in $\Codomain$ to sequences. Of course, the domain of $\Lambda$ is never all of $\Codomain [[\sigma]]$. For $\Lambda \Sigma$ to restrict to addition on finitely-supported series, it is necessary and sufficient that $\Lambda$ should take all eventually constant sequences to their constant values.

If $\Codomain$ is a field, it is often possible to extend a $\Lambda \Sigma$ summation via a \defi{N{\o}rlund mean}. Fix $P \in \Domain$ such that $\Sum P \neq 0$ and $(\Sigma P)_n \neq 0$ in $\Codomain$ for all $n$. For any sequence $S \in \R[[\sigma]]$, let $\Nor_P(S) \in \Codomain[[\sigma]]$ be the sequence whose terms are $(P S)_n / (\Sigma P)_n \in \Codomain$; if $\Lambda\Nor_P\Sigma(X)$ exists and equals $x$, we define $\Sum_P (X) = x$. This method of summation is closely related to telescoping. Indeed, we may define $\T_P(X)$ to be the series whose terms are $(P X)_n/\Sum (P)$ (in $\Codomain$); thus $\T_P (X) = PX  \Sum (P) \in \Codomain[[\sigma]]$, and $\Sum \T_P$ telescopes $X$ against the series $P$, and against no other series. The following proposition is adapted from Proposition 1.2 of \cite{Dawson}.

\begin{proposition}\label{Proposition: Lambda Sigma TelP = Lambda NorP Sigma}
	Suppose $\Codomain$ is a field. Let $(\Domain, \Sum) \in \Sums \R \Codomain$ be a summation with $\Sum = \Lambda \Sigma$ as above. Suppose that $P \in \Domain$ is as above, and either $\Lambda$ preserves (termwise) products, or $P$ is finitely supported and $\Lambda$ takes every eventually constant sequence to its final value. Then 
	\begin{equation*}
		\Lambda\Nor_P\Sigma(X) = \Lambda\Sigma\T_P(X)
	\end{equation*}
	whenever the left-hand side or the right-hand side is defined.
\end{proposition}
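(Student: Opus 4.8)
The plan is to reduce both sides to $\Lambda$ applied to two sequences sharing a common numerator, and then to show those two sequences have the same $\Lambda$-value. First I would record the algebraic identity that drives everything. Since $\Sigma = 1/(1-\sigma)$ as a formal series, for any $X$ we have $\Sigma(X) = X/(1-\sigma)$ in $\Codomain[[\sigma]]$, and hence $P \cdot \Sigma(X) = PX/(1-\sigma) = \Sigma(PX)$; taking $n$th coefficients gives $(P\,\Sigma(X))_n = (\Sigma(PX))_n$. Writing $u_n \coloneqq (\Sigma(PX))_n$ and $w_n \coloneqq (\Sigma P)_n$, the $n$th term of $\Nor_P\Sigma(X)$ is $u_n/w_n$, while the $n$th term of $\Sigma\T_P(X) = \Sigma(PX)/\Sum(P)$ is $u_n/f$, where $f \coloneqq \Sum(P) = \Lambda(\Sigma P) = \Lambda(w)$. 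So the proposition reduces to the single claim that $\Lambda\big((u_n/w_n)_n\big) = \Lambda\big((u_n/f)_n\big)$, with one side defined iff the other is.

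Next I would isolate the discrepancy between the two sequences as a single termwise factor. We have $u_n/w_n = (u_n/f)\cdot(f/w_n)$ and, symmetrically, $u_n/f = (u_n/w_n)\cdot(w_n/f)$; the two ``correcting'' factors are $h_n \coloneqq w_n/f = (\Sigma P)_n/f$ and its termwise reciprocal $g_n \coloneqq f/w_n$ (note every $w_n \neq 0$ by hypothesis). By linearity of $\Lambda$ together with $\Lambda(w) = f$, we get $\Lambda(h) = (1/f)\Lambda(\Sigma P) = 1$ immediately; the companion value $\Lambda(g) = 1$ is the delicate one, since it concerns the reciprocal sequence $f/(\Sigma P)_n$.

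I would then dispatch the two cases. In the finitely supported case, $(\Sigma P)_n$ is eventually constant: for $n \geq \deg P$ we have $(\Sigma P)_n = \Add(P) = \Sum(P) = f$, so $h_n = g_n = 1$ for all large $n$, and the sequences $(u_n/w_n)_n$ and $(u_n/f)_n$ differ in only finitely many terms. Their difference is eventually constant with final value $0$, so $\Lambda$ sends it to $0$ by hypothesis; since this difference always lies in the domain of $\Lambda$, linearity shows that either sequence lies in the domain iff the other does, and then the $\Lambda$-values agree. In the product-preserving case, the direction ``left defined $\Rightarrow$ right defined and equal'' is clean: from $\Lambda(u/w)$ defined and $\Lambda(h) = 1$ defined, product-preservation yields $\Lambda(u/f) = \Lambda\big((u/w)\cdot h\big) = \Lambda(u/w)$.

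The hard part will be the reverse direction in the product-preserving case, which requires $\Lambda(g) = \Lambda(f/(\Sigma P)_n) = 1$ to be \emph{defined} — and this does not follow from multiplicativity alone, since $w\cdot w^{-1} = \mathbf{1}$ only forces $\Lambda(w)\Lambda(w^{-1}) = \Lambda(\mathbf{1}) = 1$ once $\Lambda(w^{-1})$ is already known to exist. I would resolve this by using that the $\Lambda$ in question is a genuine limit-type operator, for which a sequence $w$ with nonvanishing terms and nonzero $\Lambda$-value $f$ satisfies $\Lambda(f/w) = 1$ (continuity of inversion away from $0$); product-preservation then gives $\Lambda(u/w) = \Lambda\big((u/f)\cdot g\big) = \Lambda(u/f)$. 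This reciprocal step is precisely where product-preservation, rather than mere linearity, is indispensable, and it is the only point at which definedness of the left-hand side from the right is not automatic, so I would make the behavior of $\Lambda$ under reciprocals explicit there.
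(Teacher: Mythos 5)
The paper offers no written proof of this proposition --- it is presented as an adaptation of Proposition 1.2 of \cite{Dawson} --- so your argument can only be checked against the statement itself. Your reduction is certainly the intended skeleton: since $\Sigma = (1-\sigma)^{-1}$, one has $P\,\Sigma(X) = \Sigma(PX)$, so with $u_n = (\Sigma(PX))_n$, $w_n = (\Sigma P)_n$, and $f = \Sum(P) = \Lambda(w)$, the two sides become $\Lambda\bigl((u_n/w_n)_n\bigr)$ and $\Lambda\bigl((u_n/f)_n\bigr)$. Your handling of the finitely supported case (the difference of the two sequences is finitely supported, hence eventually constant with value $0$, so linearity transfers both definedness and value) is correct, as is the implication ``N{\o}rlund side defined $\Rightarrow$ telescoped side defined and equal'' in the product-preserving case via $\Lambda(w/f)=1$.

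The gap is exactly where you located it, but your patch does not close it within the stated hypotheses. To pass from the right-hand side to the left you need the sequence $(f/w_n)_n$ to lie in the domain of $\Lambda$, and ``$\Lambda$ preserves termwise products'' gives no purchase on reciprocals: it yields existence of $\Lambda(ab)$ only when both $\Lambda(a)$ and $\Lambda(b)$ are already known to exist, and here the factor in question is precisely the reciprocal whose membership is at issue. Your appeal to ``continuity of inversion away from $0$'' is an additional property of $\Lambda$, true for the classical limit functional (the motivating case in \cite{Dawson}) but not a consequence of linearity plus product-preservation: for instance, let the domain of $\Lambda$ be the termwise subalgebra generated by the eventually constant sequences together with $w$, with $\Lambda$ the evaluation sending $w \mapsto f$; this preserves products but need not contain $1/w$. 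So, as a proof of the proposition as literally stated, the implication from the right-hand side to the left-hand side in the product-preserving case is not established; it requires either an explicit closure-under-reciprocals hypothesis on $\Lambda$ or a different argument, and you should say which you are assuming.
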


If $P$ is finitely supported and $\Sum(PX)$ is defined, then $X \in \T\Domain$ and $\Lambda\Sigma\T_P \parent X = \T\Sum \parent X$ \cite{Dawson}. If $\Lambda$ preserves products, but $P$ is not finitely supported, we have a generalized form of telescoping. These remarks motivate the following definition.

\begin{defn}\label{Definition: Rational Extension}
	For a multiplicative summation $(\Domain, \Sum) \in \MSums \R \Codomain$, the \defi{rational extension} $(\Q\Domain, \Q\Sum)$ of $(\Domain, \Sum)$ is defined as follows. For $X \in \R[[\sigma]]$, we say $X \in \Q\Domain$ if there exists $A, B \in \Domain,$ $b \in \Reg \Codomain$, and $x \in \Codomain$ such that $A = BX$, $\Sum(B) = b$, and $\Sum(A) = b x$. We define
	\begin{align*}
		\Q\Sum &: \Q\Domain \to \Codomain, \\
		\Q\Sum &: X \mapsto x \ \text{if} \ X \ \text{is as above.}
	\end{align*}
	We extend this definition to weakly multiplicative summations $(\Domain, \Sum) \in \WSums \R \Codomain$ by setting $(\Q\Domain, \Q\Sum) \coloneqq (\Q\M\Domain, \Q\M\Sum)$.
\end{defn}

Repeating (essentially) the proof of Theorem \ref{Theorem: Telescopic Extension}, we obtain:

\begin{theorem}\label{Theorem: Rational Extension}
	The functor of summations
	\[
		\Q : \wMSumCat \to \MSumCat
	\]
	is an idempotent extension map which subsumes $\M$, $\T$, $\M \T$, and $\T \M$.
\end{theorem}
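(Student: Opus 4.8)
The plan is to mirror the proof of Theorem~\ref{Theorem: Telescopic Extension}, with one structural change. That argument used shift-invariance to extract $\Sum(FX) = \Sum(F)\Sum(X)$ for a \emph{polynomial} multiplier $F$, a fact available for every summation; here the multiplier $B$ ranges over all of $\Domain$, so I must instead invoke multiplicativity to obtain $\Sum(BX) = \Sum(B)\Sum(X)$. This is exactly why $\Q$ is defined only on multiplicative summations and then extended to $\wMSumCat$ through $\M$. Accordingly I would first treat a multiplicative $(\Domain, \Sum) \in \MSums \R \Codomain$ and only afterward unwind the definition $\Q\Sum \coloneqq \Q\M\Sum$ for weakly multiplicative inputs.

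For well-definedness, suppose $X \in \Q\Domain$ has two representations $(A, B)$ and $(A\prm, B\prm)$ as in Definition~\ref{Definition: Rational Extension}. Since $\Domain$ is now an $\R$-algebra, the products $B A\prm$ and $B\prm A$ both lie in $\Domain$, and $B A\prm = B B\prm X = B\prm A$; multiplicativity then yields $b b\prm x\prm = \Sum(B A\prm) = \Sum(B\prm A) = b\prm b x$, so regularity of $b b\prm$ forces $x = x\prm$. The summation axioms follow by the same template: taking $B_1 B_2$ as a common multiplier shows $\Q\Domain$ is closed under addition, scalar multiplication, and products, with $\Q\Sum$ respecting each operation, so $\Q\Sum$ is an $\R$-algebra homomorphism; axiom~\ref{Condition: AreSeries} holds because $F \in \R[\sigma] \subseteq \Domain$ admits the representation $B = 1$, $A = F$; and shift-invariance is automatic from multiplicativity, since $\Q\Sum(\sigma) = \Add(\sigma) = 1$ and $\Q\Sum(1 - \sigma) = 0$. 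Thus $\Q$ is an extension map into $\MSumCat$, and it preserves extensions (hence is a functor, by Proposition~\ref{Proposition: functor of summations}) because any representation over $\Domain$ is a representation over any larger domain.

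Idempotence mirrors the corresponding step for $\T$. Given $X \in \Q\Q\Domain$, I would unwind the outer layer to get $A = BX$ with $A, B \in \Q\Domain$, then unwind $A$ and $B$ to representations $(A_0, B_0)$ and $(A_1, B_1)$ over $\Domain$. Setting $B\prm \coloneqq B_0 A_1$ and $A\prm \coloneqq B_1 A_0$, both in $\Domain$ by the ring structure, one checks $A\prm = B\prm X$, that $\Sum(B\prm) = b_0 b_1 b$ is regular, and that $\Sum(A\prm) = b_0 b_1 b x$, exhibiting $X \in \Q\Domain$ with the same value; hence $\Q\Q\Sum = \Q\Sum$.

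The subsumption claims are where the real bookkeeping lives, and I expect them to be the main obstacle. The pivotal observation is the containment $\T\M\Sum \subseteq \Q\Sum$: a telescopic representation $FX = A$ with $F \in \R[\sigma]$ and $A \in \M\Domain$ is literally a rational representation with multiplier $B = F \in \M\Domain$, so $\T\M\Domain \subseteq \Q\M\Domain = \Q\Domain$ with matching values. Dually, $\Q\Sum$ is telescopically closed: since a polynomial is a legitimate rational multiplier, $\T\Q\Sum \subseteq \Q\Q\Sum = \Q\Sum$, and equality holds because $\T$ is an extension map. Combining these with the facts that $\M$ fixes multiplicative summations (Theorem~\ref{Theorem: Multiplicative Extension}), that $\T$ preserves multiplicativity (Proposition~\ref{Proposition: Telescopic extensions preserve multiplication}), and that $\M\T\Sum \subseteq \T\M\Sum$ (Theorem~\ref{Theorem: Telescopic-multiplicative extension}), I would verify $\Q = \Q\calG = \calG\Q$ for each $\calG \in \set{\M, \T, \M\T, \T\M}$. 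For the $\calG\Q$ direction one uses that $\Q\Sum$ is multiplicative and telescopically closed, e.g.\ $(\T\M)\Q\Sum = \T(\M\Q\Sum) = \T\Q\Sum = \Q\Sum$; for the $\Q\calG$ direction one squeezes $\M\T\Sum \subseteq \T\M\Sum \subseteq \Q\Sum$ against $\M\T\Sum \supseteq \M\Sum$ and applies idempotence of $\Q$ to conclude $\Q\M\T\Sum = \Q\Sum$, with the cases for $\M$ and $\T$ following as restrictions. The genuine care lies in tracking the $\M$-reduction of weakly multiplicative inputs and in confirming each identity in both composition orders.
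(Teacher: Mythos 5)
Your proposal is correct and matches the paper's intent exactly: the paper's entire ``proof'' of this theorem is the single remark that one repeats the argument of Theorem~\ref{Theorem: Telescopic Extension}, with multiplicativity of $\Sum$ standing in for the polynomial-multiplier identity $\Sum(FX)=\Sum(F)\Sum(X)$, which is precisely the structural change you identify. Your explicit treatment of the subsumption claims (via $\T\M\Sum\subseteq\Q\Sum$, $\T\Q\Sum=\M\Q\Sum=\Q\Sum$, and the squeeze through $\M\T\Sum\subseteq\T\M\Sum\subseteq\Q\Sum$) is sound and in fact more detailed than what the paper records.
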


If $\Sum = \Q\Sum$, so that $\Sum$ is $\Q$-closed, we say $\Sum$ is \defi{rationally closed}. 

\begin{obsrem}
	The collection $\QSumCat$ of rationally closed multiplicative summations forms a full subcategory of $\wMSumCat$. Observe $\Q$ is left adjoint to the forgetful functor from $\QSumCat$ to $\wMSumCat$.  We could define the \defi{rational fulfillment} of a weakly multiplicative summation $\Sum$ to be the largest extension of $\Sum$ which is compatible with every extension of $\Sum$ in $\QSumCat$, but the rational fulfillment of a summation is the same as its multiplicative fulfillment, so such terminology would be superfluous.
\end{obsrem}

The rational extension, then, is defined on formal quotients $A/B$ with coefficients in $\R$, where $A$ is summable and $B$ has nonzero sum. For comparison, the telescopic extension is defined on all formal quotients $A/F$ with coefficients in $\R$, where $A$ is summable and $F$ is \emph{finitely supported} with nonzero sum. How meaningful is this difference? We readily observe that $\T\Add = \Q\Add$, where $\Add$ is the summation with domain the finitely-supported series. Likewise, as $\Q$ is idempotent and subsumes $\T$, we see that $\T\Sum = \Q\Sum$ for every rationally closed summation $\Sum$. We now give a less trivial example of a summation for which the telescopic and rational extensions are equal.

\begin{definition}\label{GeomConv}
A series $S$ with coefficients $s_n \in \bbC$ will be called \defi{geometrically convergent} if there exists $r \in (0,1)$ such that $s_n = O(r^n)$ as $n \to \infty$. We write $(\Domain_g, \Sum_g)$ for the restriction of $(\Domain_c, \Convsum)$ to geometrically convergent series.
\end{definition}

If $S$ is geometrically convergent, the radius of convergence of $S(z)$ is at least $1/r$, which is strictly greater than 1. Note that every geometrically convergent series is absolutely convergent, that $\Sum_g$ is multiplicative, and that $\Domain_g$ is a subalgebra of $\Domain_a$.

\begin{proposition}
	If $S = S(\sigma) \in \bbC[[\sigma]]$ is geometrically convergent, then for some $\epsilon > 0$, the function $S(z)$ has only finitely many zeros on some $(1+\epsilon)$-disc. There is a polynomial $P_S(z)$ that has the same zeros as $S$ (including multiplicity) on that disc; in particular, if $\Sum_g(S) = S(1) \neq 0$, then $\Add(P_S) = P_S(1) \neq 0$. Moreover, $P_S(z)/S(z)$ has only removable singularities in a $(1+\epsilon)$-disc; and the series $T$ of Taylor coefficients of $P_S(z)/S(z)$ is geometrically convergent.
\end{proposition}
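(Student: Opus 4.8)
The plan is to exploit the gap between the radius of convergence of $S(z)$, which is at least $1/r > 1$, and the disc of radius $1$. First I would fix $\epsilon > 0$ small enough that the closed disc $\overline{D}(0, 1+\epsilon)$ of radius $1+\epsilon$ is contained strictly inside the disc of convergence; this is possible precisely because $1/r > 1$, so we may take any $\epsilon$ with $0 < \epsilon < 1/r - 1$. On this closed disc $S(z)$ is holomorphic and not identically zero (its constant or leading term is nonzero, or at worst $S$ is a nonzero analytic function), so by the identity theorem its zeros are isolated, and since $\overline{D}(0,1+\epsilon)$ is compact, there are only finitely many of them, counted with multiplicity.

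Next I would build $P_S$. Listing the zeros of $S$ in the $(1+\epsilon)$-disc as $\zeta_1, \dots, \zeta_m$ with multiplicities $\mu_1, \dots, \mu_m$, I would set $P_S(z) \coloneqq \prod_{k=1}^m (z - \zeta_k)^{\mu_k}$. By construction $P_S$ is a polynomial with exactly the same zeros (with multiplicity) as $S$ on the disc. For the nonvanishing claim: if $\Sum_g(S) = S(1) \neq 0$, then $1$ is not among the $\zeta_k$, so $P_S(1) = \prod_k (1 - \zeta_k)^{\mu_k} \neq 0$; and since $\Add(P_S) = P_S(1)$ by the definition of $\Add$, this gives $\Add(P_S) \neq 0$.

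For the quotient, I would observe that $Q(z) \coloneqq P_S(z)/S(z)$ is meromorphic on the $(1+\epsilon)$-disc, and at each $\zeta_k$ the numerator and denominator vanish to the same order $\mu_k$, so every singularity is removable; after removing them, $Q$ extends to a holomorphic function on the whole $(1+\epsilon)$-disc. Being holomorphic on a disc of radius $1+\epsilon$, its Taylor series at the origin converges there, so its radius of convergence is at least $1 + \epsilon$. Finally, to conclude that the Taylor series $T$ is \emph{geometrically} convergent in the sense of Definition \ref{GeomConv}, I would apply the Cauchy estimates: choosing any $\rho$ with $1 < \rho < 1+\epsilon$ and setting $M \coloneqq \max_{|z| = \rho} |Q(z)|$, the $n$th Taylor coefficient $t_n$ satisfies $|t_n| \leq M \rho^{-n}$, so with $r' \coloneqq 1/\rho \in (0,1)$ we get $t_n = O((r')^n)$, which is exactly geometric convergence.

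The main obstacle is a subtle well-definedness point rather than any hard analysis: the construction of $P_S$ depends on an implicit choice of $\epsilon$ (the zero set, and hence $P_S$, changes as $\epsilon$ grows to capture more zeros), so the statement is really an existence claim for \emph{some} $\epsilon$ and a compatible pair $(P_S, T)$, and I would take care to phrase it as ``there exists $\epsilon$'' throughout rather than asserting uniqueness. I would also make sure the degenerate case $S \equiv 0$ is excluded — it is, since geometric convergence with $S(1) \neq 0$ or simply a nonzero series is assumed in the relevant applications — and handle the edge case where $S$ has no zeros in the disc by taking $P_S = 1$, for which $Q = 1/S$ is already holomorphic and the geometric estimate goes through verbatim.
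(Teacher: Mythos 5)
Your proof is correct and follows essentially the same route as the paper: both arguments rest on the observation that the disc of convergence has radius $1/r > 1$, so the zeros of $S$ cannot cluster inside a slightly larger disc than the unit disc, whence there are only finitely many. The paper leaves the remaining claims as ``follow immediately''; your explicit construction of $P_S$, the removability argument, and the Cauchy estimates for geometric convergence of $T$ are exactly the details being elided.
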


\begin{proof} 
	If $S$ is geometrically convergent, $S(z)$ has only finitely many zeros on $\{z \ : \ \abs{z} \leq \frac{1+r}{2r}\}$; otherwise the set of zeros would have a cluster point $z_0$ with $\abs{z_0} < \frac{1}{r}$. Such a cluster point would be an essential singularity of $S(z)$ that lies within the disc of convergence, which is impossible. The other claims follow immediately.
\end{proof}

\begin{corollary}
$\T\Sum_g=\Q\Sum_g$.
\end{corollary}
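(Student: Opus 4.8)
The plan is to prove the two inclusions $\T\Sum_g \subseteq \Q\Sum_g$ and $\Q\Sum_g \subseteq \T\Sum_g$ separately, as the inclusion of functors yields equality of the resulting summations on $\Sum_g$. The first inclusion is free: by Theorem \ref{Theorem: Rational Extension} the functor $\Q$ subsumes $\T$, so $\Q\Sum_g = \Q\T\Sum_g \supseteq \T\Sum_g$ because $\Q$ is an extension map. All the content of the corollary therefore lies in the reverse inclusion, and the whole point is that the preceding proposition lets us trade division by an arbitrary geometrically convergent denominator for division by a mere polynomial.

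For the reverse inclusion I would start with $X \in \Q\Domain_g$ and unpack Definition \ref{Definition: Rational Extension}: there exist $A, B \in \Domain_g$, a regular element $b \in \Reg{\Codomain} = \bbC^\times$, and $x \in \bbC$ with $A = BX$, $\Sum_g(B) = B(1) = b \neq 0$, and $\Sum_g(A) = A(1) = b x$. Applying the preceding proposition to $S = B$ then produces a polynomial $P_B \in \bbC[\sigma]$ with $P_B(1) \neq 0$ together with a \emph{geometrically convergent} series $T$ whose Taylor coefficients are those of $P_B(z)/B(z)$; in particular $T B = P_B$ as formal power series and $T \in \Domain_g$.

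The key move is to telescope $X$ against the finitely supported polynomial $F \coloneqq P_B$. We have
\[
F X = P_B X = (T B) X = T (B X) = T A,
\]
and since $\Domain_g$ is a subalgebra of $\Domain_a$, the product $T A$ again lies in $\Domain_g$. Because $\Sum_g$ is multiplicative, I would then compute
\[
\Sum_g(F X) = \Sum_g(T A) = \Sum_g(T)\,\Sum_g(A) = T(1) \cdot A(1) = \frac{P_B(1)}{B(1)} \cdot b x = P_B(1)\, x = F(1)\, x,
\]
using $T(1) = P_B(1)/B(1)$ (valid since $B(1) \neq 0$ puts no singularity at $z=1$). As $F$ is finitely supported with $\Sum_g(F) = F(1) = P_B(1) \in \Reg{\Codomain}$, this exhibits $X \in \T\Domain_g$ with $\T\Sum_g(X) = x = \Q\Sum_g(X)$, as required.

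The supposed main obstacle — clearing the geometrically convergent denominator $B$ without leaving $\Domain_g$ — is already dispatched by the preceding proposition: its guarantee that the quotient $T = P_B/B$ is again \emph{geometrically} convergent (not merely convergent) is exactly what keeps $T A$ inside $\Domain_g$ so that multiplicativity of $\Sum_g$ applies. Everything else is routine: the identity $T B = P_B$ as power series, the closure of $\Domain_g$ under products, and the evaluation $T(1) = P_B(1)/B(1)$ all follow directly once that fact is in hand.
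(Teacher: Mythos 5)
Your proposal is correct and follows essentially the same route as the paper: given $X = A/B$ with $A, B \in \Domain_g$, you telescope $X$ against the polynomial $P_B$ supplied by the preceding proposition, using $P_B X = TA \in \Domain_g$ (since $T$ is geometrically convergent and $\Domain_g$ is an algebra) and multiplicativity of $\Sum_g$ to recover the value $x$. The paper's proof is just a one-line compression of exactly this argument, with your careful checks of $P_B(1) \neq 0$ and $T(1) = P_B(1)/B(1)$ left implicit.
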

\begin{proof}
	If $S',S \in \Sum_g$, then $P_S\cdot(S'/S) = TS' \in \Sum_g$. Thus, $S'/S$ is in fact telescopable over $\Sum_g$.
\end{proof}

However, there are many examples of multiplicative or weakly multiplicative  summations for which $\T\Sum \neq \Q\Sum$. 

\begin{example}
	If $X$ is transcendental over $R[\sigma]$, we extend the domain of $\Add$ by $X$, mapped to any unit $\xi$. The resulting multiplicative summation $\Add[X]$ sums precisely the series of the form $\sum_{j=0}^\ell F_j X^j$, where $F_j \in \R[\sigma]$. Then $\T \R[\sigma, X]$ does not assign a value to $X^{-1}$, but $\Q \R[\sigma, X]$ does.
\end{example}

We will now construct a more natural example, a series in $\Q\Domain_a$ which cannot be telescoped even over $\Sum_c$. Consider the (complex-valued) function 
\begin{align*}
S &: \bbC \setminus \set{-1} \to \bbC, \\
S &: z \mapsto (1 + z)(1 + \exp(z/(1+z)));
\end{align*}
this function the value $0$  at the points of the set $\{(2k+1)\pi i/(1-(2k+1)\pi i) \ : \ k \in \bbZ\}$, which lies on the circle $\{z \ : \ \abs{z-\frac{1}{2}} =\frac{1}{2}\}$ and has a cluster point at $-1$ (Figure \ref{zeros}). This cluster point is necessarily an essential singularity.  The key to our construction is that, despite this, the Taylor series for the function $S(z)$ is absolutely convergent at 1. 

\begin{figure}[h]
	\begin{center}
	\includegraphics[width=4cm]{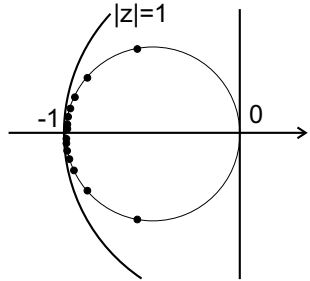}
	\caption{The set $\{(2k+1)\pi i/(1-(2k+1)\pi i) \ : \ k \in \bbZ\}$}
	\label{zeros}
\end{center}
\end{figure}

Define the auxiliary function 
\begin{align*}
T &: \bbC \setminus \set{1} \to \bbC, \\
T &: z \mapsto (1 - z)\exp(-z/(1-z)),
\end{align*}
so that $S(z) = 1 + z + T(-z)$.

Thus 
\[
T(z) = 1 - 2z + \frac{z^2}{2} + \frac{z^3}{3} + \frac{5z^4}{24} + \frac{7z^5}{60} + \frac{37z^6}{720} + \frac{17z^7}{2520} - \frac{887z^8}{40320} + \cdots ,
\]
and
\[
S(z) = 2 + 3z + \frac{z^2}{2}- \frac{z^3}{3} + \frac{5z^4}{24} - \frac{7z^5}{60} + \frac{37z^6}{720} - \frac{17z^7}{2520} - \frac{887z^8}{40320} \cdots.
\]

The signs of the coefficients $t_n$ (and of $s_n$) are not strictly alternating. Nor are the absolute values monotone decreasing: rather, they exhibit an oscillation that both lengthens and decays (see Figure \ref{plot}). The curve shown is $y=0.9293 n^{-5/4}$ (the constant was found experimentally.) 

\begin{figure}[h]
     \begin{center}
	\includegraphics[width=14cm]{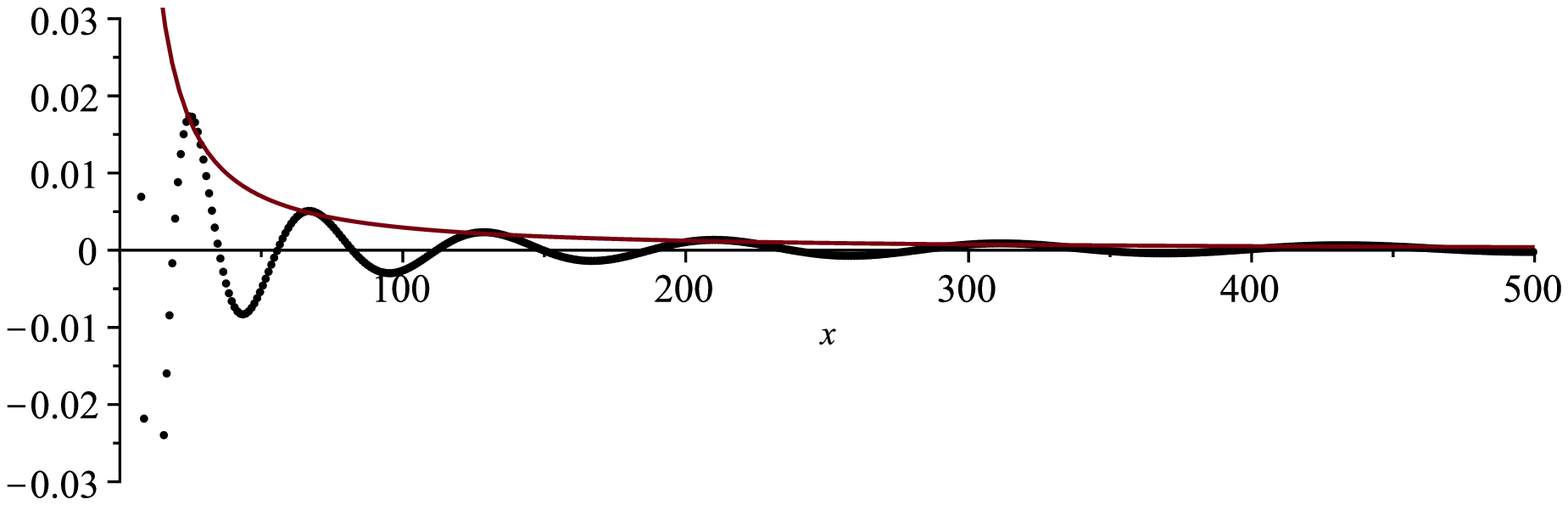}
	\caption{Taylor coefficients of $T(x) = (1-x)\exp\parent{\frac{-x}{1-x}}$}
	\label{plot}
   \end{center}
\end{figure}

\begin{prop}\label{Proposition: T(z) is absolutely convergent}
	The power series expansions for $T(z)$ and $S(z)$ converge for $\abs{z} \leq 1$.
\end{prop}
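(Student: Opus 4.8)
The plan is to reduce the whole proposition to an absolute-summability statement for the Taylor coefficients, and then to obtain that estimate by a stationary-phase analysis localized at the essential singularity $z = 1$. Since $S(z) = 1 + z + T(-z)$, writing $T(z) = \sum_{n \ge 0} t_n z^n$ gives $s_n = (-1)^n t_n$ for all $n \ge 2$ (while $s_0$ and $s_1$ differ from $t_0$ and $-t_1$ only by the contribution of $1 + z$), so $\sum_n \abs{s_n} < \infty$ if and only if $\sum_n \abs{t_n} < \infty$. Moreover, once $\sum_n \abs{t_n} < \infty$ is known, for every $z$ with $\abs{z} \le 1$ we have $\sum_n \abs{t_n z^n} \le \sum_n \abs{t_n} < \infty$, so both power series converge absolutely on the closed unit disc. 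Thus everything reduces to the single claim $\sum_{n \ge 0} \abs{t_n} < \infty$.

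Next I would pass to the boundary of the disc. Using $-z/(1-z) = 1 - 1/(1-z)$, rewrite $T(z) = e\,(1-z)\exp\parent{-1/(1-z)}$. The key elementary computation is that $\operatorname{Re}\tfrac{1}{1-z} = \tfrac12$ on the unit circle: for $z = e^{i\theta}$ one has $\frac{1}{1-e^{i\theta}} = \tfrac12 + \tfrac{i}{2}\cot(\theta/2)$. Hence $\abs{\exp\parent{-1/(1-z)}} = e^{-1/2}$ on the circle, so $T$ is bounded there, with $\abs{T(e^{i\theta})} = e^{1/2}\abs{1 - e^{i\theta}} \to 0$ as $\theta \to 0$; in particular $T$ extends continuously to the closed disc. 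Since $T$ is analytic inside, its Taylor coefficients coincide with its Fourier coefficients, giving
\[
	t_n = \frac{e^{1/2}}{2\pi}\int_{-\pi}^{\pi}\parent{1 - e^{i\theta}}\exp\parent{-\tfrac{i}{2}\cot(\theta/2) - i n \theta}\,d\theta .
\]

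I would then estimate this oscillatory integral. The phase $\psi(\theta) = -n\theta - \tfrac12\cot(\theta/2)$ has $\psi'(\theta) = \tfrac14\csc^2(\theta/2) - n$, which vanishes at the two stationary points $\theta_\pm \approx \pm n^{-1/2}$, where $\abs{\psi''(\theta_\pm)} \asymp n^{3/2}$ and the (smooth) amplitude satisfies $\abs{1 - e^{i\theta_\pm}} \asymp n^{-1/2}$. Splitting $[-\pi,\pi]$ into the stationary windows around $\theta_\pm$ and the complementary region where $\abs{\psi'}$ is large (either $\abs{\theta} \ll n^{-1/2}$, where $\psi' \asymp \csc^2(\theta/2)$, or $\abs{\theta} \gg n^{-1/2}$, where $\psi' \approx -n$), repeated integration by parts gives rapid decay off the stationary scale, while the second-derivative (van der Corput) estimate gives contribution $\asymp \abs{1-e^{i\theta_\pm}} \cdot \abs{\psi''(\theta_\pm)}^{-1/2} \asymp n^{-1/2}\cdot n^{-3/4} = n^{-5/4}$ from each window. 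This yields $\abs{t_n} = O(n^{-5/4})$, consistent with the experimentally observed $0.9293\,n^{-5/4}$, and since $\tfrac54 > 1$ we conclude $\sum_n \abs{t_n} < \infty$.

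The main obstacle is the rigorous oscillatory-integral estimate in the last step: the phase $\psi$ is singular at $\theta = 0$, so a textbook stationary-phase lemma does not apply verbatim. I would handle this by isolating the single stationary scale $\theta \asymp n^{-1/2}$, applying van der Corput's second-derivative lemma (with the amplitude-derivative correction, which is harmless since $1 - e^{i\theta}$ is smooth) on the stationary windows, and using integration by parts on the strongly non-stationary inner and outer regions; only an upper bound $O(n^{-1-\delta})$ is actually required, which is more robust than the precise asymptotic. An equivalent route is a complex saddle-point analysis of $t_n = \frac{e}{2\pi i}\oint \parent{1-z}\exp\parent{-1/(1-z)}\,z^{-n-1}\,dz$, whose saddles sit at $z = 1 \mp i\,n^{-1/2}$ and reproduce the same $n^{-5/4}$ bound; there the difficulty is instead justifying the steepest-descent contour past the essential singularity.
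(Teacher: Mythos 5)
Your reduction is exactly the paper's: use $S(z) = 1 + z + T(-z)$ to pass to $T$, rewrite $T(z) = e\,(1-z)\exp\parent{-1/(1-z)}$, and deduce convergence on the closed disc from $t_n = O(n^{-5/4})$. Where you part ways is in how that coefficient bound is obtained. The paper simply cites Wright's asymptotic \cite[Theorem 2]{Wright} for coefficients of functions of this exact shape, whereas you propose to derive the bound from scratch: boundary values via $\operatorname{Re}\frac{1}{1-z} = \frac12$ on the circle, the Fourier-coefficient representation of $t_n$, and a stationary-phase analysis with saddles at $\theta \asymp n^{-1/2}$, $\abs{\psi''} \asymp n^{3/2}$, amplitude $\asymp n^{-1/2}$, giving $n^{-5/4}$ per window. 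All of these numbers are correct and reproduce Wright's exponent, so your route is viable and has the virtue of being self-contained; the paper's route buys brevity and a sharper statement (Wright gives a full asymptotic expansion, which the authors also use to explain the oscillation in their Figure 2). Two points deserve attention if you carry this out. First, your identification of Taylor with Fourier coefficients needs $T$ to lie in the disc algebra; boundedness on the circle alone is not quite the statement --- you should note that $\operatorname{Re}\frac{1}{1-z} \ge \frac12$ on the entire closed disc, so $\abs{T(z)} \le e^{1/2}\abs{1-z}$ there and $T$ extends continuously to the closed disc with $T(1)=0$. Second, and more substantially, the van der Corput/stationary-phase estimate near the singular point $\theta = 0$ is the entire content of the argument, and you have (candidly) left it as a plan rather than executed it; as written the proposal establishes the strategy and the correct exponent but not the bound itself. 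Since only $\abs{t_n} = O(n^{-1-\delta})$ is needed, the estimate is robust and the plan should go through, but until that step is written out the proof is incomplete in a way the paper's one-line appeal to Wright is not.
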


\begin{proof}
	As $S(z) = 1 + z + T(-z)$, it suffices to prove the claim for $T(z) = \sum_{n = 0}^\infty t_n z^n$. Note that $T(z) = (1 - z) \cdot e \cdot \exp(-1/(1-z))$, so letting $r = 0$ in the asymptotic given by \cite[Theorem 2]{Wright}, we see
	\[
	t_n = O(n^{-5/4}).
	\]
	The series $\sum_{n = 1}^\infty n^{-5/4}$ is absolutely convergent, so $T(z)$ is absolutely convergent on and within the unit circle, as desired.
\end{proof}

\begin{remark}
	Wright's asymptotic for the coefficients of $T(x)$ applies to a much more general class of functions, and is much more refined than the statement $t_n = O(n^{-5/4})$ we used in the proof of Proposition \ref{Proposition: T(z) is absolutely convergent}. Indeed, it suffices to explain the oscillating behavior shown in Figure \ref{plot}. However, this coarse statement suffices for our purposes.
\end{remark}

\begin{example}\label{RatNotTel} Let $X(\sigma) \coloneqq 1/S(\sigma)$, so that
\[
X(\sigma) = \frac{1}{2} - \frac{3}{4} + 1 - \frac{59}{48}  + \frac{17}{12}  - \frac{247}{160}  + \frac{383}{240}  
- \frac{126151}{80640}  + \frac{14599}{10080} +\cdots.
\]
is rational over $\Sum_a$. The series $X$ it is not telescopable; its generating function $X(z)$ has infinitely many poles on the open unit disc, and multiplying by a nonzero polynomial $P(z)$ can only remove finitely many of these poles, so $PX$ is not even convergent. Thus $\T\Sum_a \neq \Q\Sum_a$, and $\T\Sum_c \neq \Q\Sum_c$.
\end{example}

We close with an observation about the image of $\Q\Sum$. This perspective will be revisited in \cite{Dawson-Molnar-3}.

\begin{prop}\label{Proposition: The image of Q(Sum) is the field of fractions of Sum(Domain)}
	Let $\Codomain$ be a field. For any multiplicative summation $(\Domain, \Sum) \in \MSums \R \Codomain$, the image of $\Q \Domain$ under $\Q\Sum$ is the field of fractions of $\Sum(\Domain)$.
\end{prop}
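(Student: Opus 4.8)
The plan is to prove the two inclusions separately, after recording one structural fact: since $\Q\Sum$ is a summation (indeed a multiplicative one, by Theorem \ref{Theorem: Rational Extension}), it is an $\R$-algebra homomorphism, so its image $\Q\Sum(\Q\Domain)$ is a subring of the field $\Codomain$; and since $\Q\Sum$ extends $\Sum$, this subring contains $\Sum(\Domain)$. Write $K \coloneqq \operatorname{Frac}(\Sum(\Domain))$, realized inside $\Codomain$ as $\Set{s/t \ : \ s, t \in \Sum(\Domain), \ t \neq 0}$. The goal is then $\Q\Sum(\Q\Domain) = K$.

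The inclusion $\Q\Sum(\Q\Domain) \subseteq K$ is immediate from Definition \ref{Definition: Rational Extension}. If $X \in \Q\Domain$ with witnesses $A, B \in \Domain$, $b = \Sum(B)$ and $x = \Q\Sum(X)$, then because $\Codomain$ is a field the regularity of $b$ simply means $b \neq 0$, and the defining relation $\Sum(A) = b x$ gives $x = \Sum(A)/\Sum(B)$, a ratio of elements of $\Sum(\Domain)$ with nonzero denominator; hence $x \in K$.

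For the reverse inclusion I would show that an arbitrary $s/t \in K$ lies in the image. Write $s = \Sum(A)$ and $t = \Sum(B_0)$ with $A, B_0 \in \Domain$ and $t \neq 0$. The difficulty, and the crux of the whole argument, is that to produce an honest power series $X \in \R[[\sigma]]$ realizing this fraction through a relation $B X = A$, I need a denominator $B \in \Domain$ that is invertible in $\R[[\sigma]]$, i.e. has a unit constant term, and $B_0$ need not have this property. The key step is to repair $B_0$ without changing its sum, using shift-invariance: because $\Sum$ factors through $\Domain/(1-\sigma)\Domain$ (axiom \ref{Condition: Factors by (1 - sigma)}), adding any multiple of $(1-\sigma)$ leaves the sum unchanged. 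Setting $r \coloneqq 1 - B_0(0) \in \R$ and $B \coloneqq B_0 + (1 - \sigma) r \in \Domain$, I get $\Sum(B) = \Sum(B_0) = t$ while the constant term of $B$ is $B_0(0) + r = 1 \in \R^\times$. Hence $B$ is a unit in $\R[[\sigma]]$, so $X \coloneqq A B^{-1} \in \R[[\sigma]]$ is well-defined. Now $B X = A \in \Domain$, $\Sum(B) = t \neq 0$, and $\Sum(A) = s = t \cdot (s/t)$, so $X \in \Q\Domain$ and $\Q\Sum(X) = s/t$ by Definition \ref{Definition: Rational Extension}. This exhibits an arbitrary element of $K$ in the image and closes the argument.

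The only step demanding genuine thought is this denominator-repair construction; everything else is bookkeeping with the definitions. In carrying it out I would verify that $r$ is a constant series (so $r \in \R \subseteq \R[\sigma] \subseteq \Domain$ and $(1-\sigma)r \in \R[\sigma] \subseteq \Domain$), that the constant term of $(1-\sigma)r$ is indeed $r$, and that a power series over the commutative ring $\R$ is invertible exactly when its constant term is a unit — the three elementary facts on which the construction rests.
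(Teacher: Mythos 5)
Your proof is correct, and it reaches the surjectivity half of the statement by a different mechanism than the paper. The paper does not construct a preimage for an arbitrary fraction $s/t$; instead it argues abstractly that the image of $\Q\Sum$ is a subring of $\Codomain$ containing $\Sum(\Domain)$ and contained in its fraction field, and then shows the image is closed under inverses of nonzero elements: given $x = \Q\Sum(X) \neq 0$ with witnesses $A = BX$, it replaces $X$ by $1 - \sigma + \sigma^2 X$ to make $X$ itself a unit of $\R[[\sigma]]$ without changing its sum, whence $B = A X\inv$ exhibits $X\inv \in \Q\Domain$ with $\Q\Sum(X\inv) = x\inv$. A subring of $\operatorname{Frac}(\Sum(\Domain))$ containing $\Sum(\Domain)$ and closed under inverses must be the whole fraction field. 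Your route instead repairs the \emph{denominator}: you adjust $B_0$ by $(1-\sigma)(1 - B_0(0)) \in \R[\sigma]$ to get a unit $B$ with the same sum, and then directly produce $X = AB\inv$ summing to $s/t$. The two arguments turn on the same trick --- perturbing a series by an element of $(1-\sigma)\R[\sigma]$ to force unit constant term while preserving the sum --- applied to different series. Yours is more constructive (it names a preimage for each element of the fraction field and never needs the image to be a ring); the paper's is slightly shorter because it only ever inverts series already known to lie in $\Q\Domain$. Your supporting checks (regular $=$ nonzero in a field, $(1-\sigma)r \in \R[\sigma] \subseteq \Domain$, invertibility of a power series with unit constant term) are all sound.
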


\begin{proof}
	By Theorem \ref{Theorem: Rational Extension}, $\Q\Sum$ is an $\R$-algebra homomorphism, and so its image is certainly a ring. Now let $x \neq 0$ be in the image of $\Q\Sum$, and write $x = \Q\Sum \parent X$ with $X \in \Q\Domain$. As $X \in \Q\Domain$, there exist series $A, B \in \Domain$ with $A = B X$ and $\Sum \parent A / \Sum \parent B = x$: this shows that the image of $\Q\Sum$ is contained in the field of fractions of $\Sum(\Domain)$.  Replacing $X$ with $1 - \sigma + \sigma^2 X \in \Domain$ if necessary, we may assume $X$ is a unit. We see $B = A X\inv$ and $\Sum \parent B / \Sum \parent A = x\inv$, and so $X\inv \in \Q \Domain$ with $\Q \Sum (X\inv)  = x\inv$. The claim follows. 
\end{proof}

\begin{corollary}
	Let $\Codomain$ be a field. For any multiplicatively fulfilled multiplicative summation $(\Domain, \Sum) \in \MSums \R \Codomain$, the image of $\Domain$ under $\Sum$ is a subfield of $\Codomain$.
\end{corollary}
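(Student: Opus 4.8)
The plan is to show that $\Sum$ is rationally closed, i.e.\ that $\Q\Sum = \Sum$, and then simply read off the conclusion from Proposition~\ref{Proposition: The image of Q(Sum) is the field of fractions of Sum(Domain)}. The first and crucial step is to argue that $\Q\Sum$ is multiplicatively $\Sum$-canonical. Let $\Sum'$ be an arbitrary multiplicative extension of $\Sum$. Since $\Q \colon \wMSumCat \to \MSumCat$ is a functor (Theorem~\ref{Theorem: Rational Extension}), it preserves extensions, so $\Sum \subseteq \Sum'$ yields $\Q\Sum \subseteq \Q\Sum'$; and since $\Q$ is an extension map, $\Sum' \subseteq \Q\Sum'$. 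Thus the multiplicative summation $\Q\Sum'$ is a common multiplicative extension of $\Q\Sum$ and $\Sum'$, witnessing that $\Q\Sum$ and $\Sum'$ are multiplicatively compatible. As $\Sum'$ ranged over all multiplicative extensions of $\Sum$, the summation $\Q\Sum$ is multiplicatively $\Sum$-canonical.

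Next I would use the hypothesis that $\Sum$ is multiplicatively fulfilled, so that $\Sum$ is its own multiplicative fulfillment. By definition, the multiplicative fulfillment extends every multiplicatively $\Sum$-canonical summation; in particular $\Sum \supseteq \Q\Sum$. Combined with $\Q\Sum \supseteq \Sum$, which holds because $\Q$ is an extension map, this forces $\Q\Sum = \Sum$, and hence $\Q\Domain = \Domain$. Now I would invoke Proposition~\ref{Proposition: The image of Q(Sum) is the field of fractions of Sum(Domain)}: as $\Codomain$ is a field, the image of $\Q\Domain$ under $\Q\Sum$ is the field of fractions of $\Sum(\Domain)$. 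Substituting $\Q\Sum = \Sum$ and $\Q\Domain = \Domain$, the image $\Sum(\Domain)$ equals its own field of fractions. Since $\Sum(\Domain)$ is the image of an $\R$-algebra homomorphism into the field $\Codomain$, it is a subring of $\Codomain$ and hence an integral domain; an integral domain that coincides with its field of fractions is a field, so $\Sum(\Domain)$ is a subfield of $\Codomain$.

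The main obstacle is the first step: recognizing that the canonicity of $\Q\Sum$ is best established \emph{functorially}, by taking $\Q\Sum'$ itself as the common multiplicative extension rather than attempting to construct one directly. This shortcut matters precisely because multiplicative compatibility is strictly stronger than consistency (cf.\ Example~\ref{ConsistButNotMultComp}), so a hand-built common extension would be delicate; using idempotence and functoriality of $\Q$ sidesteps that difficulty entirely. Everything after the equality $\Q\Sum = \Sum$ is formal bookkeeping with the definition of multiplicative fulfillment together with the cited proposition.
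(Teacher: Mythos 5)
Your argument is correct and follows exactly the route the paper intends for this (unproved) corollary: establish that $\Q\Sum$ is multiplicatively $\Sum$-canonical via functoriality and idempotence of $\Q$, deduce $\Q\Sum = \Sum$ from the multiplicative fulfillment hypothesis, and then read off from Proposition \ref{Proposition: The image of Q(Sum) is the field of fractions of Sum(Domain)} that $\Sum(\Domain)$ coincides with its own field of fractions. The only thing you add beyond the paper's implicit reasoning is the explicit verification of canonicity of $\Q\Sum$, which is a welcome clarification rather than a deviation.
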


\bibliographystyle{amsplain}

\begin{thebibliography}{10}

\bibitem{Ahmed-Sana}
Hamed Ahmed and Hizem Sana, \emph{On the class group and {$S$}-class group of
  formal power series rings}, J. Pure Appl. Algebra \textbf{221} (2017),
  no.~11, 2869--2879. \MR{3655708}

\bibitem{Heath}
Archimedes, \emph{The works of {A}rchimedes}, Dover Publications, Inc.,
  Mineola, NY, 2002, Reprint of the 1897 edition and the 1912 supplement,
  Edited by Thomas L. Heath. \MR{2000800}

\bibitem{Aroca-Rond}
Fuensanta Aroca and Guillaume Rond, \emph{Support of {L}aurent series algebraic
  over the field of formal power series}, Proc. Lond. Math. Soc. (3)
  \textbf{118} (2019), no.~3, 577--605. \MR{3932782}

\bibitem{Balser}
Werner Balser, \emph{From divergent power series to analytic functions},
  Lecture Notes in Mathematics, vol. 1582, Springer-Verlag, Berlin, 1994,
  Theory and application of multisummable power series. \MR{1317343}

\bibitem{Beekman-Chang1}
Wolfgang Beekmann and Shao~Chien Chang, \emph{An invariance problem in
  summability}, Analysis \textbf{1} (1981), no.~4, 297--302. \MR{727880}

\bibitem{Beekman-Chang2}
Wolfgang Beekmann and Shao-Chien Chang, \emph{On the structure of summability
  fields}, Results Math. \textbf{7} (1984), no.~2, 119--129. \MR{774390}

\bibitem{Boos}
Johann Boos, \emph{Classical and modern methods in summability}, Oxford
  Mathematical Monographs, Oxford University Press, Oxford, 2000, Assisted by
  Peter Cass, Oxford Science Publications. \MR{1817226}

\bibitem{Cauchy}
Augustin-Louis Cauchy, \emph{Analyse alg\'{e}brique}, Cours d'Analyse de
  l'\'{E}cole Royale Polytechnique. [Course in Analysis of the \'{E}cole Royale
  Polytechnique], \'{E}ditions Jacques Gabay, Sceaux, 1989, Reprint of the 1821
  edition. \MR{1193026}

\bibitem{Dawson-Molnar-1}
Robert Dawson and Grant Molnar, \emph{Fast-growing series are transcendental},
  2021.

\bibitem{Dawson-Molnar-3}
\bysame, \emph{Multiplicative summations into algebraically closed fields},
  forthcoming.

\bibitem{Dawson}
Robert J.~MacG. Dawson, \emph{Formal summation of divergent series}, J. Math.
  Anal. Appl. \textbf{225} (1998), no.~2, 532--541. \MR{1644284}

\bibitem{Enochs-Jenda-Ozbek}
Edgar Enochs, Overtoun Jenda, and Furuzan Ozbek, \emph{Submonoids of the formal
  power series}, Houston J. Math. \textbf{43} (2017), no.~3, 703--711.
  \MR{3739029}

\bibitem{Euler}
Leonhard Euler, \emph{De seriebus divergentibus}, Novi Commentarii academiae
  scientiarum Petropolitanae \textbf{5} (1760), 205--237.

\bibitem{Faulstich-Luh}
K.~Faulstich and W.~Luh, \emph{Summability of power series on prescribed sets
  by regular {R}iesz methods}, Analysis \textbf{2} (1982), no.~1-4, 253--265.
  \MR{732334}

\bibitem{Fripertinger-Reich}
Harald Fripertinger and Ludwig Reich, \emph{The translation equation in the
  ring of formal power series over {$\Bbb C$} and formal functional equations},
  Developments in functional equations and related topics, Springer Optim.
  Appl., vol. 124, Springer, Cham, 2017, pp.~41--69. \MR{3699998}

\bibitem{Graham-Oren}
Ronald~L. Graham, Donald~E. Knuth, and Oren Patashnik, \emph{Concrete
  mathematics}, second ed., Addison-Wesley Publishing Company, Reading, MA,
  1994, A foundation for computer science. \MR{1397498}

\bibitem{Hardy}
G.~H. Hardy, \emph{Divergent series}, \'{E}ditions Jacques Gabay, Sceaux, 1992,
  With a preface by J. E. Littlewood and a note by L. S. Bosanquet, Reprint of
  the revised (1963) edition. \MR{1188874}

\bibitem{Hill}
J.~D. Hill, \emph{On perfect methods of summability}, Duke Math. J. \textbf{3}
  (1937), no.~4, 702--714. \MR{1546024}

\bibitem{Hizem}
Sana Hizem, \emph{Formal power series over strongly {H}opfian rings}, Comm.
  Algebra \textbf{39} (2011), no.~1, 279--291. \MR{2810602}

\bibitem{Knopp}
Konrad Knopp, \emph{Theory and application of infinite series}, second ed.,
  Dover Publications, Inc., New York, 1990, Translated by R. C. H. Young.

\bibitem{Koblitz}
Neal Koblitz, \emph{{$p$}-adic numbers, {$p$}-adic analysis, and
  zeta-functions}, second ed., Graduate Texts in Mathematics, vol.~58,
  Springer-Verlag, New York, 1984. \MR{754003}

\bibitem{Myerson-Poorten}
G.~Myerson and A.~J. van~der Poorten, \emph{Some problems concerning recurrence
  sequences}, Amer. Math. Monthly \textbf{102} (1995), no.~8, 698--705.
  \MR{1357486}

\bibitem{Ivan}
Ivan Niven, \emph{Formal power series}, Amer. Math. Monthly \textbf{76} (1969),
  871--889. \MR{252386}

\bibitem{Nori}
Madhav~V. Nori, \emph{Summation and the {P}oisson formula}, Res. Math. Sci.
  \textbf{7} (2020), no.~1, Paper No. 5, 34. \MR{4067173}

\bibitem{Remy}
Pascal Remy, \emph{Gevrey properties and summability of formal power series
  solutions of some inhomogeneous linear {C}auchy-{G}oursat problems}, J. Dyn.
  Control Syst. \textbf{26} (2020), no.~1, 69--108. \MR{4042940}

\bibitem{Roman}
Steven Roman, \emph{The umbral calculus}, Pure and Applied Mathematics, vol.
  111, Academic Press, Inc. [Harcourt Brace Jovanovich, Publishers], New York,
  1984. \MR{741185}

\bibitem{Schur}
I.~Schur, \emph{\"{U}ber lineare transformationen in der theorie der
  unendlichen reihen}, Jour. Reine Ungew. Math. \textbf{151} (192).

\bibitem{Shawyer-Watson}
Bruce Shawyer and Bruce Watson, \emph{Borel's methods of summability}, Oxford
  Mathematical Monographs, The Clarendon Press, Oxford University Press, New
  York, 1994, Theory and applications, Oxford Science Publications.
  \MR{1320266}

\bibitem{Calculus}
Gilbert Strang and et~al. Herman, Edwin, \emph{Calculus: Volume 2}, OpenStax,
  2016.

\bibitem{Wright}
E.~Maitland Wright, \emph{The coefficients of a certain power series}, J.
  London Math. Soc. \textbf{7} (1932), no.~4, 256--262. \MR{1574837}

\end{thebibliography}

\end{document}